\title[fractional div-curl quantities]{\protect{Fractional div-curl quantities and applications to nonlocal geometric equations}}
\author{Katarzyna Mazowiecka}
\author{Armin Schikorra}
\address[Katarzyna Mazowiecka]{Mathematisches Institut, 
Abt. f\"ur Reine Mathematik, 
Albert-Ludwigs-Universit\"at,
Eckerstra\ss{}e 1,
79104 Freiburg im Breisgau,
Germany
\newline
\& Institute of Mathematics,
University of Warsaw,
Banacha 2,
02-097 Warszawa, Poland}
\email{katarzyna.mazowiecka@math.uni-freiburg.de}
\address[Armin Schikorra]{Mathematisches Institut, 
Abt. f\"ur Reine Mathematik, 
Albert-Ludwigs-Universit\"at,
Eckerstra\ss{}e 1,
79104 Freiburg im Breisgau,
Germany}
\email{armin.schikorra@math.uni-freiburg.de}
\def\eps{\varepsilon}
\def\vp{\varphi}
\def\N{{\mathbb N}}
\def\n{{\mathcal N}}
\def\S{{\mathbb S}}
\newtheorem{theorem}{Theorem}
\newtheorem{lemma}[theorem]{Lemma}
\newtheorem{corollary}[theorem]{Corollary}
\newtheorem{proposition}[theorem]{Proposition}
\theoremstyle{definition}
\newtheorem{remark}[theorem]{Remark}
\def\curl{{\rm curl\,}}
\newcommand{\R}{\mathbb{R}}
\newcommand{\Z}{\mathbb{Z}}
\newcommand{\brac}[1]{\left (#1 \right )}
\newcommand{\Ep}{\bigwedge\nolimits}
\newcommand{\norm}[1]{\left\|{#1}\right\|}
\newcommand{\barint}{
\rule[.036in]{.12in}{.009in}\kern-.16in \displaystyle\int }
\newcommand{\barcal}{\mbox{$ \rule[.036in]{.11in}{.007in}\kern-.128in\int $}}
\def\mvint_#1{\mathchoice
          {\mathop{\vrule width 6pt height 3 pt depth -2.5pt
                  \kern -8pt \intop}\nolimits_{\kern -3pt #1}}%
          {\mathop{\vrule width 5pt height 3 pt depth -2.6pt
                  \kern -6pt \intop}\nolimits_{#1}}%
          {\mathop{\vrule width 5pt height 3 pt depth -2.6pt
                  \kern -6pt \intop}\nolimits_{#1}}%
          {\mathop{\vrule width 5pt height 3 pt depth -2.6pt
                  \kern -6pt \intop}\nolimits_{#1}}}
\numberwithin{theorem}{section} \numberwithin{equation}{section}
\renewcommand{\div}{\operatorname{div}}
\newcommand{\lap}{\Delta }
\newcommand{\aleq}{\precsim}
\newcommand{\aeq}{\approx}
\newcommand{\laps}[1]{(-\lap)^{\frac{#1}{2}}}
\newcommand{\lapv}{(-\lap)^{\frac{1}{4}}}
\newcommand{\laph}{(-\lap)^{\frac{1}{2}}}
\begin{document}

\sloppy
\keywords{fractional divergence, fractional div-curl lemma, fractional harmonic maps}
\subjclass[2010]{42B37, 42B30, 35R11, 58E20, 35B65}
\sloppy


\begin{abstract}
We investigate a fractional notion of gradient and divergence operator. We generalize the div-curl estimate by Coifman--Lions--Meyer--Semmes to fractional div-curl quantities, obtaining, in particular, a nonlocal version of Wente's lemma.

We demonstrate how these quantities appear naturally in nonlocal geometric equations, which can be used to obtain a theory for fractional harmonic maps analogous to the local theory.
Firstly, regarding fractional harmonic maps into spheres, we obtain a conservation law analogous to Shatah's conservation law and 
give a new regularity proof analogous to H\'elein's for harmonic maps into spheres. 

Secondly, we prove regularity for solutions to critical systems with nonlocal antisymmetric potentials on the right-hand side. Since the half-harmonic map equation into general target manifolds has this form, as a corollary, we obtain a new proof of the regularity of half-harmonic maps into general target manifolds following closely Rivi\`{e}re's celebrated argument in the local case.

Lastly, the fractional div-curl quantities provide also a new, simpler, proof for H\"older continuity of $W^{s,n/s}$-harmonic maps into spheres and we extend this to an argument for $W^{s,n/s}$-harmonic maps into homogeneous targets. This is an analogue of Strze\-lecki's and Toro--Wang's proof for $n$-harmonic maps into spheres and homogeneous target manifolds, respectively.
\end{abstract}

\maketitle
\newpage
\tableofcontents

\section{Introduction}
Products of divergence-free and curl-free vector fields, the so-called div-curl-quantities, play a fundamental role in Geometric Analysis. They appear, for example, in the theory of compensated compactness in the form of the div-curl Lemma: 
let $L^2(\Ep^1 \R^n)$ be the $L^2$-space of $1$-forms on $\R^n$, or equivalently the space of vector fields $L^2(\R^n,\R^n)$. Given two sequences $\{F_k\}_{k \in \N}, \{G_k\}_{k \in \N}$ in $L^2(\Ep^1 \R^n)$ which weakly converge in $L^2(\Ep^1 \R^n)$ to $F$ and $G$, respectively. In general, there is no reason that the product converges
\begin{equation}\label{eq:ccconvergence} F_k \cdot G_k \xrightarrow{k \to \infty} F\cdot G \quad \mbox{in }\mathcal{D}'(\R^n).\end{equation}
If we know, however, that (in distributional sense) $\div(F_k) = 0$ and $\curl(G_k) = 0$, or more generally assuming compactness of $\div(F_k)$ and $\curl(G_k)$ in $H^{-1}$, then \eqref{eq:ccconvergence} indeed holds true.
This phenomenon is known as compensated compactness and its theory was developed by Murat and Tartar in the late seventies \cite{Murat-1978,Murat-1981,Tartar-1978,Tartar-1979,Tartar-1982}, see also the more recent \cite{Briane-Casado-Diaz-Murat-2009, Conti-Dolzmann-Mueller}. 

In \cite{CLMS-1993} Coifman, Lions, Meyer, and Semmes found a relation between div-curl quantities and the Hardy space $\mathcal{H}^1(\R^n)$ (for a definition see Section~\ref{s:hardyspaceest}). 

\begin{theorem}[Coifman--Lions--Meyer--Semmes]\label{th:clms}
Let $F \in L^p(\Ep^1 \R^n)$ and $g \in \dot{W}^{1,p'}(\R^n)$ where $p \in (1,\infty)$ and $p' = \frac{p}{p-1}$. Then, if 
\[
 \div F = 0,
\]
we have
\[
 F \cdot dg \in \mathcal{H}^1(\R^n)
\]
with the estimate
\[
 \|F \cdot d g\|_{\mathcal{H}^1(\R^n)} \aleq \|F\|_{L^p(\Ep^1 \R^n)}\ \|d g\|_{L^{p'}(\Ep^1 \R^n)}.
\]
\end{theorem}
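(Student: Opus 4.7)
The plan is to use the grand-maximal-function characterization of $\mathcal{H}^1(\R^n)$: it suffices to prove that for a fixed $\varphi \in C_c^\infty(B(0,1))$ with $\int \varphi = 1$, the nontangential/radial maximal function $M_\varphi(F\cdot dg)(x) = \sup_{t>0} |(\varphi_t \ast (F\cdot dg))(x)|$ belongs to $L^1(\R^n)$, with $\varphi_t(y)=t^{-n}\varphi(y/t)$. So I would reduce the theorem to a pointwise estimate on this maximal function.

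The central computation exploits $\div F = 0$ via integration by parts. Writing $F\cdot dg = F\cdot \nabla g$ and computing
\[
 (\varphi_t \ast (F\cdot \nabla g))(x) = \int \varphi_t(x-y)\, F(y)\cdot \nabla g(y)\, dy = -\int \nabla_y\varphi_t(x-y)\cdot F(y)\, g(y)\, dy,
\]
the divergence-free assumption kills the term with $g\,\div F$. Since $\int \nabla_y \varphi_t(x-y)\,dy = 0$, I may subtract the mean $(g)_{B(x,t)}$ on the ball $B(x,t)$, obtaining
\[
 |(\varphi_t\ast (F\cdot \nabla g))(x)| \leq C\, t^{-n-1} \int_{B(x,t)} |F(y)|\,|g(y)-(g)_{B(x,t)}|\,dy.
\]
Next I pick auxiliary exponents: any $a \in (1,\min(p,n))$ and its Sobolev companion $b$ defined by $\tfrac{1}{b} = \tfrac{1}{a'} + \tfrac{1}{n}$. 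By H\"older on $B(x,t)$ and the Sobolev--Poincar\'e inequality
$
\bigl(\fint_B |g-(g)_B|^{a'}\bigr)^{1/a'} \leq C\,t\, \bigl(\fint_B |\nabla g|^b\bigr)^{1/b},
$
the above estimate simplifies to the pointwise bound
\[
 M_\varphi(F\cdot \nabla g)(x) \leq C\, M_a(F)(x)\, M_b(\nabla g)(x),
\]
where $M_r f := (M|f|^r)^{1/r}$ and $M$ is the Hardy--Littlewood maximal operator.

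Finally I would integrate: by H\"older with exponents $p,p'$,
\[
 \|M_\varphi(F\cdot \nabla g)\|_{L^1} \leq C\,\|M_a F\|_{L^p}\, \|M_b \nabla g\|_{L^{p'}}.
\]
Because $a<p$ and $b<p'$ (the latter follows from $\tfrac{1}{b}>\tfrac{1}{a'}>1-\tfrac{1}{p}=\tfrac{1}{p'}$), the Hardy--Littlewood maximal theorem gives $\|M_a F\|_{L^p}\leq C\|F\|_{L^p}$ and $\|M_b \nabla g\|_{L^{p'}}\leq C\|\nabla g\|_{L^{p'}}$, yielding the stated $\mathcal{H}^1$ bound. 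The main delicate point is precisely this choice of exponents: one cannot use H\"older directly with $(p,p')$ since $M_p$ fails to be bounded on $L^p$; introducing the intermediate $a$ and using the Sobolev gain in the Poincar\'e inequality to produce a $b<p'$ is what makes both maximal operators subcritical and the argument close. A minor technical point is to ensure one may integrate by parts when $F$ is only $L^p$ and $g$ only $\dot W^{1,p'}$, which is handled by an approximation argument.
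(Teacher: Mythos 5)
Your argument is the classical Coifman--Lions--Meyer--Semmes proof (their Lemma II.1), which the paper cites for Theorem~\ref{th:clms} rather than reproving, and whose strategy is exactly what Section~\ref{s:hardyspaceest} adapts for the fractional version Theorem~\ref{th:clmstypestimate}: maximal-function characterization of $\mathcal{H}^1$, use of $\div F=0$ to subtract the mean of $g$, H\"older plus Sobolev--Poincar\'e to reach subcritical maximal functions, and the Hardy--Littlewood maximal theorem. So the approach is correct and essentially the same.

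One small repair is needed in the exponent bookkeeping: it is not true that \emph{any} $a\in(1,\min(p,n))$ works. Since $a<p$ gives $\tfrac{1}{a'}=1-\tfrac1a<1-\tfrac1p=\tfrac{1}{p'}$, your displayed chain $\tfrac1b>\tfrac{1}{a'}>\tfrac{1}{p'}$ is reversed in the middle; the condition $b<p'$, i.e. $\tfrac{1}{a'}+\tfrac1n>\tfrac{1}{p'}$, is equivalent to $\tfrac1a<\tfrac1p+\tfrac1n$, so you must take $a$ close enough to $\min(p,n)$ (for instance $n=2$, $p=10$, $a=3/2$ gives $b=6/5>p'=10/9$ and $M_b$ is then not bounded on $L^{p'}$). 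With $a\in\bigl(\max\{1,\tfrac{np}{n+p}\},\min(p,n)\bigr)$, which is nonempty, the rest of the argument closes as you describe.
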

This theorem can be applied to all div-curl quantities, since a curl-free $G \in L^{p'}(\Ep^1 \R^n)$ can be written as $G = d g$. Theorem~\ref{th:clms} had a fundamental impact, in particular, on the regularity theory for such objects as surfaces of prescribed mean curvature and harmonic maps into manifolds \cite{Helein90,Helein91-sym,Helein91,Bethuel-1992,Riviere-2007}. We will detail a few of those results below.
\subsection*{Harmonic maps into spheres}
As a first example, consider H\'{e}lein's proof \cite{Helein90} for continuity of harmonic maps from $\R^2$ into a round sphere $\S^{N-1} \subset \R^N$. These are solutions $u \in \dot{W}^{1,2}(\R^2,\R^{N})$ that pointwise a.e. satisfy $|u| \equiv 1$ and
\begin{equation}\label{eq:harmmapeq}
 -\lap u^i = u^i\, |du|^2 \quad \mbox{$i = 1,\ldots,N$}.
\end{equation}
Henceforth, we shall use Einstein's summation convention. Since  $|u| \equiv 1$ we find \[u^k du^k = \frac{1}{2} d|u|^2 = 0.\] Thus, one can rewrite \eqref{eq:harmmapeq} as
\[
 -\lap u^i = \Omega_{ik}\cdot du^k,
\]
where
\begin{equation}\label{eq:omegaharmsphere}
 \Omega_{ik} := u^i du^k-u^k du^i.
\end{equation}
Shatah discovered in \cite{Shatah-1988}, that \eqref{eq:harmmapeq} is equivalent to the conservation law
\begin{equation}\label{eq:heldiv0}
 \div \Omega_{ik}  = 0 \quad \mbox{for }i,k \in \{1,\ldots,N\}.
\end{equation}
That is, in view of Theorem~\ref{th:clms}, \eqref{eq:harmmapeq} actually implies
\[
 \lap u^i \in \mathcal{H}^1(\R^2) \quad \mbox{for any $i = 1,\ldots,N$}.
\]
Then Calderon--Zygmund theory implies that $u$ is continuous, see \cite{Semmes-1994}. 

\subsection*{Harmonic maps into general target manifolds}
Let $\mathcal{N} \subset \R^N$ be a smooth, compact manifold without boundary. Harmonic maps into $\mathcal{N}$ are solutions $u \in W^{1,2}(\R^2,\mathcal{N})$ to
\begin{equation}\label{eq:harmmapeqperp}
 -\lap u \perp T_u \mathcal{N}.
\end{equation}
Regularity for harmonic maps from $2$-dimensional domains into general manifolds $\mathcal{N}$ was proven by H\'elein in \cite{Helein91}. Rivi\`{e}re observed in the seminal work \cite{Riviere-2007} that this equation, and in fact the Euler--Lagrange equations of a huge class of conformally invariant variational functionals, have the form
\begin{equation}\label{eq:confoeq}
 -\lap u^i = \Omega_{ij} \cdot \nabla u^j, \quad \mbox{for $i =1,\ldots,N$},
\end{equation}
where $\Omega$ is an antisymmetric $L^2$-vector field, $\Omega_{ij} = - \Omega_{ji} \in L^2(\Ep^1 \R^2)$. By an adaptation of techniques due to Uhlenbeck \cite{Uhlenbeck-1982}, see also \cite{Schikorra-2010}, he then constructed a gauge $P \in \dot{W}^{1,2}(\R^2,SO(N))$, such that
\begin{equation}\label{eq:gaugelocal}
 \div ((\nabla P) P^T - P \Omega P^T) = 0.
\end{equation}
Then for $\Omega_P := (\nabla P) P^T - P \Omega P^T$,
\[
 \div(P_{ij} \nabla u^j) = P_{jk}\ (\Omega_P)_{ij} \cdot \nabla u^k.
\]
Thus, up to a multiplicative $P_{jk}$, the right-hand side becomes a div-curl quantity. Continuity of $u$ then follows essentially from Theorem~\ref{th:clms}, cf. \cite{Riviere-Struwe-2008}.
\subsection*{$n$-harmonic maps into homogeneous target manifolds}
H\'elein's regularity argument for harmonic maps into spheres was extended to $n$-harmonic maps from $\R^n$ into spheres $\S^{N-1}$, see \cite{Fuchs-1993,Takeuchi-1994,Strzelecki-1994}. We follow Strzelecki's work \cite{Strzelecki-1994}. Take an $n$-harmonic map into a sphere $\S^{N-1}$, i.e., a solution $u \in \dot{W}^{1,n}(\R^n,\S^{N-1})$ to
\begin{equation}\label{eq:nharmmapeq}
 -\div(|du|^{n-2} du^i) = u^i\, |du|^n,  \quad \mbox{for $i = 1,\ldots,N$}.
\end{equation}
This can be rewritten, for $\Omega_{ij}$ as in \eqref{eq:omegaharmsphere},
\begin{equation}\label{eq:nharmmapeqrewritten}
 -\div(|du|^{n-2} du^i) =  |du|^{n-2} \Omega_{ij}\ du^j, \quad \mbox{for $i = 1,\ldots,N$}.
\end{equation}
Again, one can observe that \eqref{eq:nharmmapeq} is equivalent to
\[
 \div(|du|^{n-2} \Omega_{ij}) = 0.
\]
Thus the right-hand side of \eqref{eq:nharmmapeqrewritten} is again a div-curl quantity and regularity follows again essentially by Theorem~\ref{th:clms}. 

Strzelecki's argument, in turn, was generalized to homogeneous spaces by Toro and Wang~\cite{Toro-Wang}.
Let us remark here, that the regularity of $n$-harmonic maps into a general target manifold $\mathcal{N}$ is still open, cf.~\cite{Schikorra-Strzelecki-2016}.
\subsection*{Outline of the article}
In Section~\ref{s:fracclms} we will introduce a fractional version of divergence and obtain a fractional analogue of Theorem~\ref{th:clms}, see Theorem~\ref{th:clmstypestimate} below.

As we shall see, this fractional divergence and, in particular, fractional div-curl quantities, appear naturally in the theory of fractional harmonic maps and critical systems with nonlocal antisymmetric potential. We give several examples of consequences of Theorem~\ref{th:clmstypestimate} in regularity theory:
in Section~\ref{s:halfharmsphere} we consider half-harmonic maps into the sphere. Applications to critical systems with nonlocal antisymmetric potential on the right-hand side will be treated in Section~\ref{s:antisym}. The case of $W^{s,p}$-harmonic maps into round target manifolds is the subject of Section~\ref{s:homo}.

Finally, in Section~\ref{s:hardyspaceest} we give the proof of the fractional $div$-$curl$ theorem, Theorem~\ref{th:clmstypestimate}.

\section{Fractional divergence and div-curl lemmas}\label{s:fracclms}
Let us remark, without going into details, that the notion ``$s$-gradient'' $d_s$ and ``$s$-divergence'' $\div_s$ defined below can be justified by an abstract theory on Dirichlet forms $\mathcal{E}_{s,2}(f) := [f]_{W^{s,2}(\R^n)}^2$ acting on the Sobolev space $W^{s,2}(\R^n)$, see \cite[Examples 4.1]{Hinz-2015}. Here for $s \in (0,1)$, the Gagliardo seminorm $[f]_{W^{s,p}(\R^n)}$ is given by
\[
 [f]_{W^{s,p}(\R^n)} := \brac{ \int_{\R^n}\int_{\R^n}\brac{\frac{|f(x)-f(y)|}{|x-y|^{s}}}^p\ \frac{dx\ dy}{|x-y|^n}}^{\frac{1}{p}}.
\]
We will denote by $\mathcal{M}(\R^n)$ the set of all functions measurable with respect to the Lebesgue measure $dx$. Furthermore, the space of measurable (off diagonal) vector fields $\mathcal{M}(\Ep^1_{od} \R^n)$ is the space of functions $F: \R^n \times \R^n \to \R$ measurable with respect to the measure $\frac{dx\ dy}{|x-y|^{n}}$. Here ``od'' stands for ``off diagonal''. Observe that we do not require antisymmetry $F(x,y) = -F(y,x)$. On the space of measurable vector fields we define the scalar product. For two vector fields $F,G \in \mathcal{M}(\Ep^1_{od} \R^n )$  set
\[
 \langle F,G \rangle(x) \equiv F \cdot G(x) = \int_{\R^n} F(x,y)\ G(x,y)\ \frac{dy}{|x-y|^{n}}.
\]
In particular we denote
\[
 \|F\|_{2}(x) := \sqrt{\langle F,F \rangle(x)},
\]
and more generally,  
\[
 \|F\|_{p}(x) :=\brac{\int_{\R^n} |F(x,y)|^p\ \frac{dy}{|x-y|^{n}}}^{\frac{1}{p}}.
\]
The \emph{$s$-gradient} $d_s: \mathcal{M}(\R^n) \to \mathcal{M}(\Ep^1_{od} \R^n)$ acting on functions $g: \R^n \to \R$ takes the form
\[
 d_s g(x,y) := \frac{g(x)-g(y)}{|x-y|^s} \in \mathcal{M}\brac{\Ep^1_{od}\R^n}.
\]
In particular,
\[
 F \cdot d_s g(x) = \int_{\R^n} F(x,y)\ \frac{g(x)-g(y)}{|x-y|^s}\ \frac{dy}{|x-y|^n}.
\]
We will call the dual operation to the $s$-gradient $d_s$ the $s$-divergence $\div_s$. It maps a vector field $F \in \mathcal{M}(\Ep^1_{od}\R^n)$ into a function $\div_s F \in \mathcal{M}(\R^n)$. Its distributional definition is
\begin{equation}\label{eq:divs}
 \div_s(F)[\varphi] = \int_{\R^n} \int_{\R^n} F(x,y)\ d_s \varphi(x,y)\ \frac{dx\ dy}{|x-y|^n}, \quad \varphi \in C_c^\infty(\R^n).
\end{equation}
In particular, we say that a vector field $F$ is divergence free, $\div_s F = 0$, if
\begin{equation}\label{eq:vanishing}
\int_{\R^n} \int_{\R^n} F(x,y)\ \frac{\varphi(x) -\varphi(y) }{|x-y|^s} \frac{dx\ dy}{|x-y|^n} = 0 \quad \mbox{for any $\varphi \in C_c^\infty(\R^n)$}.
\end{equation}
Moreover, the canonical relation to the fractional Laplacian holds true: $\div_s d_s = -(-\lap)^s$, in the sense that
\[
 \int_{\R^n} d_s f \cdot d_sg(x)\ dx = \int_{\R^n} (-\lap)^s f(x)\ g(x)\ dx.
\]
Here, $(-\lap)^s$ stands for 
\[
 (-\lap)^s f(x)  = P.V. \int_{\R^n} \frac{f(x)-f(y)}{|x-y|^{2s}}\ \frac{dy}{|x-y|^n},
\]
or, equivalently,
\[
 \mathcal{F} \brac{(-\lap)^s f}(\xi) = c\, |\xi|^{2s} \mathcal{F} (f)(\xi),
\]
where $\mathcal{F}$ denotes the Fourier transform and $c$ is a multiplicative constant.

For $p \in (1,\infty)$ the natural $L^p$-space  on vector fields $F$, is induced by the norm 
\[
 \|F\|_{L^p(\Ep^1_{od}\R^n)}:= \brac{\int_{\R^n}\int_{\R^n}|F(x,y)|^p \frac{dx\ dy}{|x-y|^n}}^\frac 1p \equiv \big \|\|F\|_{p}(\cdot) \big \|_{L^p(\R^n)}.
\]
Observe that, in particular, we have
\[
 [g]_{W^{s,p}(\R^n)} = \|d_s g\|_{L^p(\Ep^1_{od} \R^n)}.
\]
Also, if $F \in L^p(\Ep^1_{od}\R^n))$ then $F(x,x) = 0$ for almost every $x$.

Lastly, for $\Omega\subseteq \R^n$ we denote
\[
 \|F\|_{L^p(\Ep^1_{od}\Omega)}:= \brac{\iint_{\brac{\Omega\times\R^n}\cup \brac{\R^n\times\Omega}}|F(x,y)|^p \frac{dx\ dy}{|x-y|^n}}^\frac 1p.
\]

A fractional div-curl quantity is then the product $F \cdot d_s g$ where $F$ is $s$-divergence free. For such expressions, we have the fractional counterpart of Theorem~\ref{th:clms}.
\begin{theorem}[div-curl quantities and Hardy space]\label{th:clmstypestimate}
Let $s \in (0,1)$, $p \in (1,\infty)$. For $F \in L^p(\Ep^1_{od} \R^n)$ and $g \in W^{s,p'}(\R^n)$ assume that $\div_s F = 0$. Then $F \cdot d_s g$ belongs to the Hardy space $\mathcal{H}^1(\R^n)$ and we have the estimate
\[
 \|F \cdot d_s g \|_{\mathcal{H}^1(\R^n)} \leq C\  \|F\|_{L^p(\Ep^1_{od} \R^n)}\ \|d_s g\|_{L^{p'}(\Ep^1_{od} \R^n)},
\]
or, equivalently, for any $\varphi \in C_c^\infty(\R^n)$,
\[
 \int_{\R^n} \varphi\ F \cdot d_s g\ dx \leq C\ [\varphi]_{BMO(\R^n)}\ \|F\|_{L^p(\Ep^1_{od} \R^n)}\ \|d_s g\|_{L^{p'}(\Ep^1_{od} \R^n)}.
\]
Here, $C$ is a uniform constant depending on $s$, $p$, and the dimension $n$.
\end{theorem}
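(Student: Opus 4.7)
The plan is to mimic the classical Coifman--Lions--Meyer--Semmes argument adapted to the $s$-differential calculus, using the grand maximal function characterization of $\mathcal{H}^1$. Since the two conclusions of the theorem are equivalent via Fefferman--Stein duality, it suffices to show that for a fixed compactly supported smooth $\phi$ with $\int \phi = 1$, the $L^1$-norm of the grand maximal function $\sup_{t > 0}|\phi_t \ast (F \cdot d_s g)|$ is bounded by $\|F\|_{L^p(\Ep^1_{od}\R^n)}\|d_s g\|_{L^{p'}(\Ep^1_{od}\R^n)}$, where $\phi_t(x) := t^{-n}\phi(x/t)$.

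The central step is to exploit $\div_s F = 0$. For fixed $x \in \R^n$ and $t > 0$, I would test the vanishing $s$-divergence against the function $\psi(z) := \phi_t(x-z)(g(z) - \bar g)$ with $\bar g := |B_t(x)|^{-1}\int_{B_t(x)} g$, and use the Leibniz-type decomposition
\[
 \psi(z) - \psi(y) = \phi_t(x-z)\bigl(g(z)-g(y)\bigr) + \bigl(g(y)-\bar g\bigr)\bigl(\phi_t(x-z)-\phi_t(x-y)\bigr)
\]
to arrive, after identifying the first contribution as $\phi_t \ast (F \cdot d_s g)(x)$, at the representation
\[
 \phi_t \ast (F \cdot d_s g)(x) = -\iint_{\R^n \times \R^n} F(z,y)\bigl(g(y)-\bar g\bigr)\frac{\phi_t(x-z)-\phi_t(x-y)}{|z-y|^{n+s}}\, dz\, dy.
\]
This is the fractional analogue of the identity $\phi_t\ast(F\cdot\nabla g)(x) = -\int F(y)\cdot \nabla\phi_t(x-y)(g(y)-\bar g)\, dy$ that drives the classical proof.

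I would then bound the right-hand side pointwise by splitting the $(z,y)$-integration dyadically in $|x-z|/t$ and $|x-y|/t$. In the near region $\max(|x-z|,|x-y|) \aleq t$, the mean-value bound $|\phi_t(x-z)-\phi_t(x-y)| \aleq t^{-n-1}|z-y|$, combined with the fractional Poincar\'{e} inequality $\|g-\bar g\|_{L^{p'}(B_{2t}(x))} \aleq t^s [g]_{W^{s,p'}(B_{2t}(x))}$, produces a product structure amenable to H\"older's inequality on the bilinear integral. In far dyadic annuli, where one of $\phi_t(x-z), \phi_t(x-y)$ vanishes, the decay of the nonlocal kernel $|z-y|^{-n-s}$ is paired with a telescoping estimate on the dyadic averages $g_{B_{2^k t}(x)}$ so that the sum over scales converges. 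The output is a pointwise bound by an appropriate product of maximal functions of the pointwise norms $\|F\|_p$ and $\|d_s g\|_{p'}$; H\"older in $L^1(\R^n)$ together with the $L^q$-boundedness of the Hardy--Littlewood maximal operator for suitable $q > 1$ then yields the desired Hardy space estimate.

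The main obstacle I anticipate is the control of the long-range contributions. The kernel $|z-y|^{-n-s}$ decays only polynomially, so the dyadic telescoping of the oscillation of $g$ must converge with exactly the right exponents; moreover, transferring the off-diagonal norm $\|F\|_{L^p(\Ep^1_{od}\R^n)}$ into single-variable maximal-function quantities requires a careful choice of auxiliary $L^r$-maximal functions with $r$ slightly smaller than $p$ (and its conjugate for $d_s g$), in order to use strong-type $L^q$-boundedness of $\mathcal{M}$ rather than the weak-type endpoint at $q = 1$.
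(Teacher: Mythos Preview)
Your proposal is correct and follows essentially the same strategy as the paper: grand-maximal-function characterization of $\mathcal{H}^1$, the key identity obtained by testing $\div_s F=0$ to replace $\phi_t$ by a difference $\phi_t(x-z)-\phi_t(x-y)$ with a mean of $g$ subtracted, a near/far decomposition in the $(z,y)$-variables, and finally pointwise bounds by maximal functions of $\|F\|_p(\cdot)$ and $\|d_sg\|_{p'}(\cdot)$ with auxiliary exponents strictly between $1$ and $p$ (resp.\ $p'$) so that the maximal theorem applies.

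The only differences are technical details of execution. In the near region the paper does not invoke a fractional Poincar\'e inequality directly but instead uses a localized Sobolev-type embedding (their Lemma~\ref{la:locsobolev}) to pass from $\|g-(g)_{B}\|_{L^q}$ to the $W^{s,p'}$-seminorm; your Poincar\'e route is equivalent. For the far region the paper does not telescope over dyadic annuli; it simply splits into three pieces $I,II,III$ according to whether each of $z,y$ lies in $B(x,2t)$, and for the mixed pieces uses the elementary pointwise bound $t^{-s}|g(x)-(g)_{B(x,t)}|\aleq \|d_sg\|_{p'}(x)$ (their Lemma~\ref{la:maximalest}) together with H\"older, avoiding any dyadic summation. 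Your dyadic-telescoping alternative would also work, but the paper's treatment is slightly cleaner since the integrability of $|z-y|^{-n-sp'}$ over the far set already gives the needed decay without summing scales.
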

The definition of $BMO$ and the Hardy space $\mathcal{H}^1$ can be found in Section~\ref{s:hardyspaceest}
\begin{remark}
It would be interesting to see if the estimate from Theorem~\ref{th:clmstypestimate} could also be proved from the harmonic extension to the upper half space, as is possible for classical div-curl structures and many commutators, see \cite{Lenzmann-Schikorra-2016}. 
\end{remark}

As a first immediate application let us state the following fractional version of Wente's lemma, see \cite{Wente69,BrC84,Tartar84}.

\begin{corollary}[Fractional Wente Lemma]\label{co:wentetypestimate}
Let $s \in (0,1)$, $p \in (1,\infty)$. For $F \in L^p(\Ep^1_{od} \R^n)$ and $g \in W^{s,p'}(\R^n)$ assume that $\div_s F = 0$. Moreover, let $T[\varphi]$ be a linear operator such that for some $\Lambda > 0$,
\[
 T[\varphi] \leq \Lambda \|(-\lap)^{\frac{n}{4}} \varphi \|_{L^{(2,\infty)}(\R^n)} \quad \mbox{for all $\varphi \in C_c^\infty(\R^n)$,}
\]
where $L^{(2,\infty)}$ denotes the weak $L^2$-space.

Then any distributional solution $u \in \dot{W}^{\frac{n}{2},2}(\R^n)$ to
\[
 \laps{n} u = F \cdot d_s g + T\quad \mbox{in $\R^n$},
\]
is continuous and if $\lim_{|x|\to\infty} u(x) = 0$, then
\[
 \|u\|_{L^\infty(\R^n)} \leq C\ \brac{\|F\|_{L^p(\Ep^1_{od} \R^n)}\ \|d_s g\|_{L^{p'}(\Ep^1_{od} \R^n)} + \Lambda},
\]
for a uniform constant $C > 0$ depending only on $s$, $p$, and the dimension $n$.
\end{corollary}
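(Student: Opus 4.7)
The plan is to reduce everything to Theorem~\ref{th:clmstypestimate} and to represent the abstract distribution $T$ as a fractional power of a genuine Lorentz function, so that the equation for $\laps{n/2} u$ lands in the Lorentz space $L^{(2,1)}(\R^n)$, from which a sharp Sobolev--Lorentz embedding concludes. As a first step, Theorem~\ref{th:clmstypestimate} applied to the hypothesis $\div_s F = 0$ gives $F \cdot d_s g \in \mathcal{H}^1(\R^n)$ with norm bounded by $C\,\|F\|_{L^p(\Ep^1_{od}\R^n)}\|d_s g\|_{L^{p'}(\Ep^1_{od}\R^n)}$. The Lorentz-refined endpoint of the Hardy--Littlewood--Sobolev inequality on Hardy spaces, $\laps{-n/2}: \mathcal{H}^1(\R^n) \to L^{(2,1)}(\R^n)$, then produces $v_1 := \laps{-n/2}(F \cdot d_s g) \in L^{(2,1)}(\R^n)$ with norm controlled by the same quantity and satisfying $\laps{n/2} v_1 = F \cdot d_s g$ in $\mathcal{D}'(\R^n)$.

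For the second piece, the hypothesis $T[\varphi] \leq \Lambda\,\|\laps{n/2}\varphi\|_{L^{(2,\infty)}}$ defines a continuous linear functional on the image $\laps{n/2}(C_c^\infty(\R^n)) \subset L^{(2,\infty)}(\R^n)$. Extending by Hahn--Banach to the closure of $C_c^\infty(\R^n)$ in $L^{(2,\infty)}(\R^n)$ and using the identification of its dual with the Lorentz space $L^{(2,1)}(\R^n)$, I would obtain $v_2 \in L^{(2,1)}(\R^n)$ with $\|v_2\|_{L^{(2,1)}} \aleq \Lambda$ and $T[\varphi] = \int_{\R^n} v_2 \,\laps{n/2}\varphi\, dx$ for every $\varphi \in C_c^\infty(\R^n)$; equivalently, $T = \laps{n/2} v_2$ distributionally.

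Combining, the equation reads
\[
 \laps{n/2}\bigl(\laps{n/2} u - v_1 - v_2\bigr) = 0 \quad \text{in } \mathcal{D}'(\R^n),
\]
so the bracketed quantity is a polynomial; the assumptions $u \in \dot{W}^{n/2,2}(\R^n)$ and $u(x)\to 0$ as $|x|\to\infty$ rule out any nonzero such polynomial, yielding $\laps{n/2} u = v_1 + v_2 \in L^{(2,1)}(\R^n)$ with norm bounded by $C(\|F\|_{L^p(\Ep^1_{od}\R^n)}\|d_s g\|_{L^{p'}(\Ep^1_{od}\R^n)} + \Lambda)$. The sharp Sobolev--Lorentz embedding $\laps{-n/2}: L^{(2,1)}(\R^n) \to C^0(\R^n)\cap L^\infty(\R^n)$ (the Tartar-type endpoint), together with the decay at infinity, then gives $u = \laps{-n/2}(\laps{n/2} u) \in C^0(\R^n)\cap L^\infty(\R^n)$ with the claimed bound.

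The main technical obstacle I expect is the duality step for $T$: one must carefully verify that the Hahn--Banach extension of the given bound actually corresponds to an honest $L^{(2,1)}$-function through the predual pairing with the closure of $C_c^\infty$ in weak-$L^2$, and that the resulting identity $T = \laps{n/2} v_2$ is a rigorous distributional equality compatible with the equation. Everything else is a fairly mechanical combination of the two endpoint Sobolev--Lorentz mapping properties, the Hardy-space input from Theorem~\ref{th:clmstypestimate}, and the polynomial-rigidity argument that uses the decay of $u$ to pin down the solution uniquely.
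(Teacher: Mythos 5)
Your proposal is correct and is essentially the paper's own (sketched) argument: both routes reduce the claim to showing $(-\lap)^{n/4}u \in L^{(2,1)}(\R^n)$ with the stated bound and then conclude by the endpoint Sobolev--Lorentz embedding $I^{n/2}:L^{(2,1)}\to C^0\cap L^\infty$; the paper gets there by testing the equation and combining Theorem~\ref{th:clmstypestimate} with $[\varphi]_{BMO}\aleq \|(-\lap)^{n/4}\varphi\|_{L^{(2,\infty)}(\R^n)}$ and $L^{(2,1)}$--$L^{(2,\infty)}$ duality, while you construct the dual objects $v_1,v_2$ explicitly via $I^{n/2}:\mathcal{H}^1\to L^{(2,1)}$ and the Hahn--Banach/predual identification. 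The duality step you flag as the main obstacle is precisely what the paper invokes implicitly, and your polynomial-rigidity step is unproblematic since $(-\lap)^{n/4}u - v_1 - v_2$ lies in $L^2+L^{(2,1)}\subset L^2$, where the only polynomial is zero.
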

For weak $L^p$-spaces $L^{(p,\infty)}$ and more generally Lorentz spaces $L^{(p,q)}$ we refer, e.g., to \cite{GrafakosCF,Tartar-2007}.
\begin{proof}
This follows from a standard argument, we only give a sketch of the proof: by Theorem~\ref{th:clmstypestimate}, for any $\varphi \in C_c^\infty(\R^n)$,
\[
 \int_{\R^n} (-\lap)^{\frac{n}{4}} u\ (-\lap)^{\frac{n}{4}} \varphi \aleq [\varphi]_{BMO}\, \|F\|_{L^p(\Ep^1_{od} \R^n)}\, \|d_s g\|_{L^{p'}(\Ep^1_{od} \R^n)}+ \Lambda\ \|(-\lap)^{\frac{n}{4}} \varphi\|_{L^{(2,\infty)}(\R^n)}.
\]
Since $[\varphi]_{BMO} \aleq \|(-\lap)^{\frac{n}{4}} \varphi\|_{L^{(2,\infty)}(\R^n)}$, we obtain $(-\lap)^{\frac{n}{4}} u \in L^{(2,1)}_{loc}(\R^n)$. This implies that $u$ is continuous by Sobolev embedding. 
\end{proof}

It is also beneficial to have a localized version of Theorem~\ref{th:clms}. A classical version of this result can be found, e.g., in \cite[Corollary 3]{Strzelecki-1994}. 
\begin{proposition}[Localized div-curl estimate]\label{pr:localclms}
Let $F \in L^p(\Ep^1_{od} \R^n)$ be such that $\div_s F = 0$ and let $g \in W^{s,p'}(\R^n)$ for some $s \in (0,1)$, $p \in (1,\infty)$.

Then, for any ball $B(x_0,r) \subset \R^n$ and any $\varphi \in C_c^\infty(B(x_0,r))$, for a uniform constant $\Lambda > 0$ we have 
\[
 \int_{\R^n} \varphi\ F \cdot d_s g\ dx \leq C\, \brac{[\varphi]_{BMO} + r^{-n}\|\varphi\|_{L^1(\R^n)}}\ \|F\|_{L^p(\Ep^1_{od} B(x_0, \Lambda r))} \ \|d_s g\|_{L^{p'}(\Ep^1_{od} B(x_0,\Lambda r))}.
\]
\end{proposition}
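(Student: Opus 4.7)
The plan is to combine Theorem~\ref{th:clmstypestimate} with a two-scale cutoff decomposition of $g$ that reduces the global $L^p$ and $L^{p'}$ norms on the right-hand side of Theorem~\ref{th:clmstypestimate} to their local counterparts on $B(x_0,\Lambda r)$. I would fix a large constant $\Lambda>1$ (chosen at the end), set $R:=\Lambda r/2$, pick a cutoff $\eta\in C_c^\infty(B(x_0,R))$ with $\eta\equiv 1$ on $B(x_0,R/2)$ and $\|\nabla^k\eta\|_\infty\aleq R^{-k}$, let $c$ be the average of $g$ over $B(x_0,R)$, and decompose $g=g_1+g_2$ with $g_1:=(g-c)\eta$. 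Correspondingly write $\int\varphi\, F\cdot d_sg\,dx=I_1+I_2$ with $I_i:=\int\varphi\, F\cdot d_sg_i\,dx$.

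For $I_1$ I would apply Theorem~\ref{th:clmstypestimate} to obtain the bound $C[\varphi]_{BMO}\|F\|_{L^p(\Ep^1_{od}\R^n)}\|d_sg_1\|_{L^{p'}(\Ep^1_{od}\R^n)}$, and then reduce both norms on the right to their local versions. For the $d_sg_1$ factor this is routine: a fractional Leibniz-type estimate together with the fractional Poincar\'e inequality $\|g-c\|_{L^{p'}(B(x_0,R))}\aleq R^s\|d_sg\|_{L^{p'}(\Ep^1_{od}B(x_0,\Lambda r))}$ yields $\|d_sg_1\|_{L^{p'}(\Ep^1_{od}\R^n)}\aleq \|d_sg\|_{L^{p'}(\Ep^1_{od}B(x_0,\Lambda r))}$. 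For the $F$ factor one must use that $\varphi$ and $g_1$ are both supported in $B(x_0,\Lambda r)$, so the integrand $\varphi(x)F(x,y)\,d_sg_1(x,y)$ vanishes whenever $x\notin B(x_0,r)$; tracking this spatial restriction through the Hardy-space argument underlying Theorem~\ref{th:clmstypestimate} replaces $\|F\|_{L^p(\Ep^1_{od}\R^n)}$ by $\|F\|_{L^p(\Ep^1_{od}B(x_0,\Lambda r))}$.

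For $I_2$, since $\eta\equiv 1$ on $B(x_0,R/2)\supset\supp\varphi$, we have $g_2(x)=c$ on $\supp\varphi$, and hence
\[
d_sg_2(x,y)=\frac{(c-g(y))(1-\eta(y))}{|x-y|^s}
\]
vanishes unless $y\notin B(x_0,R/2)$; for such $y$ and for $x\in B(x_0,r)$ one has $|x-y|\geq R/4$. A direct H\"older estimate in the measure $dx\,dy/|x-y|^n$ together with the kernel decay $|x-y|^{-sp'-n}$ allows pulling out $\int_{B(x_0,r)}|\varphi(x)|\,dx=\|\varphi\|_{L^1(\R^n)}$, and the remaining integral is controlled by the fractional Hardy-type bound
\[
\int_{|y-x_0|\geq R/2}\frac{|g(y)-c|^{p'}}{|y-x_0|^{sp'+n}}\,dy\aleq R^{-n}\,\|d_sg\|_{L^{p'}(\Ep^1_{od}B(x_0,\Lambda r))}^{p'},
\]
which follows from $|g(y)-c|^{p'}\leq |B(x_0,R)|^{-1}\int_{B(x_0,R)}|g(y)-g(z)|^{p'}\,dz$ combined with $|y-x_0|\aeq|y-z|$ for $z\in B(x_0,R)$ and $|y-x_0|\geq R/2$. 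This produces the factor $r^{-n}\|\varphi\|_{L^1}$ up to a $\Lambda$-dependent constant.

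The hard part will be the localization of the $\|F\|_{L^p}$-factor in the estimate for $I_1$: Theorem~\ref{th:clmstypestimate} is stated with the global $\|F\|_{L^p(\Ep^1_{od}\R^n)}$, and producing $\|F\|_{L^p(\Ep^1_{od}B(x_0,\Lambda r))}$ requires revisiting its proof to verify that the Hardy-space cancellation respects the spatial support of the integrand, or equivalently that $F\cdot d_sg_1$ belongs to the local Hardy space $h^1$ with local $L^p$ control of $F$. The tail estimate for $I_2$, by contrast, is a fairly direct H\"older/Hardy-Poincar\'e computation, and the $r^{-n}\|\varphi\|_{L^1}$ term of the statement arises naturally from it, complementing the $[\varphi]_{BMO}$ term produced by $I_1$.
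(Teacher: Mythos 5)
Your decomposition $g=g_1+g_2$ and the treatment of the tail $I_2$ are sound: the fractional Hardy/Poincar\'e computation is correct (with $|y-z|\aleq|y-x_0|$ for $z\in B(x_0,R)$, $|y-x_0|\geq R/2$), and producing the $r^{-n}\|\varphi\|_{L^1}$ term from the far field of $g$ is a legitimate alternative to the paper's mechanism, where that term instead comes from subtracting the mean $\lambda$ of $F\cdot d_sg$ over a fixed dilate of the ball. The problem is the main term $I_1$, which you yourself flag as ``the hard part'': that part is the entire content of the proposition beyond Theorem~\ref{th:clmstypestimate}, and the heuristic you offer for it does not work. The support observation that $\varphi(x)F(x,y)d_sg_1(x,y)$ vanishes for $x\notin B(x_0,r)$ is irrelevant to the $\mathcal{H}^1$--$BMO$ duality: that duality requires the \emph{global} Hardy norm of the function $h:=F\cdot d_sg_1$, which is not compactly supported (for $x$ far from $B(x_0,R)$ one has $h(x)=-\int_{B(x_0,R)}F(x,y)g_1(y)|x-y|^{-n-s}dy\neq 0$). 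Worse, tracking the proof of Theorem~\ref{th:clmstypestimate} with $g$ replaced by $g_1$ does \emph{not} localize the $F$-factor: in the term $I(t,x)$ of that proof the weight is $|g_1(z)-(g_1)_{B(x,2t)}|$, and for scales $t\gg r$ the subtracted average $(g_1)_{B(x,2t)}$ is a nonzero constant multiplying $\iint_{B(x,2t)\times B(x,2t)}|F(z,y)|\cdots$ over pairs $(z,y)$ arbitrarily far from $B(x_0,\Lambda r)$. So compact support of $g_1$ alone does not yield $\|F\|_{L^p(\Ep^1_{od}B(x_0,\Lambda r))}$.

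The fix is genuinely the local Hardy space route you mention in passing, but it has to be carried out, and it is not a routine ``revisit'': the paper inserts a cutoff $\eta_{B(4)}(y)$ \emph{inside the convolution} $\kappa_t\ast(F\cdot d_sg)$ rather than cutting off $g$, restricts the maximal function to scales $t\in(0,1)$ and to base points $x$ in a fixed dilate of the ball (this kills exactly the large-$t$ contamination described above), and then invokes the local-to-global Hardy space result \cite[Proposition 1.92]{Semmes-1994} to pass to a genuine $\mathcal{H}^1$ bound for $\eta_{B(3)}(F\cdot d_sg-\lambda)$. A further point your sketch misses: inserting the cutoff breaks the exact antisymmetrization $\kappa_t(x-y)-\kappa_t(x-z)$ that the condition $\div_sF=0$ provides, and the resulting commutator term $\brac{\eta_{B(4)}(y)-\eta_{B(4)}(z)}\kappa_t(x-y)$ must be estimated separately (it is, using the Lipschitz bound on $\eta_{B(4)}$). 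Until the local maximal function estimate with local $L^p$ control of $F$ is actually proved, the argument for $I_1$ is incomplete.
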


The proofs of Theorem~\ref{th:clmstypestimate} and Proposition~\ref{pr:localclms} can be found in Section~\ref{s:hardyspaceest}.

\section{Fractional   div-curl quantities and half-harmonic maps into spheres}\label{s:halfharmsphere}
As a first application, let us observe how Theorem~\ref{th:clmstypestimate} gives a new, streamlined proof of the continuity of half-harmonic maps into spheres $\S^{N-1}$. Let $\dot{H}^{\frac{1}{2}}(\R)\equiv \dot{W}^{\frac{1}{2},2}(\R)$ be the homogeneous Sobolev space of order $\frac{1}{2}$. By $\dot{H}^{\frac{1}{2}}(\R,\S^{N-1})$ we denote the space of maps $u \in \dot{H}^{\frac{1}{2}}(\R,\R^N)$ such that $u(x) \in \S^{N-1}$ for almost every $x \in \R$.

Half-harmonic maps are solutions $u \in \dot{H}^{\frac{1}{2}}(\R,\S^{N-1})$ to
\[
 \laph u \perp T_u \S^{N-1}.
\]
Equivalently, see \cite{Millot-Sire-2015}, they satisfy
\begin{equation}\label{eq:halfharmeq}
 \laph u^i = u^i |d_{\frac{1}{2}} u|_2^2 \quad \mbox{in $\R$, for $i = 1,\ldots,N$}.
\end{equation}

Our first observation is a conservation law, analogous to Shatah's \eqref{eq:heldiv0}, see \cite{Shatah-1988}. 
\begin{lemma}\label{la:conservationlaw}
A map $u \in \dot{H}^{\frac{1}{2}}(\R,\S^{N-1})$ is a solution to \eqref{eq:halfharmeq} if and only if
\begin{equation}\label{eq:sphereomegadef}
 \Omega_{ik}(x,y) := u^i(x)\, d_{\frac{1}{2}} u^k(x,y) - u^k(x)\, d_{\frac{1}{2}} u^i(x,y)
\end{equation}
satisfies
\begin{equation}\label{eq:ourheldiv0}
 \div_{\frac{1}{2}} \Omega_{ik}  = 0 \quad \mbox{for all }i,k \in \{1,\ldots,N\}.
\end{equation}
\end{lemma}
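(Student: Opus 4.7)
The plan is to reduce both conditions to the antisymmetric distributional identity $u^i\,\laph u^k = u^k\,\laph u^i$ for all $i,k$, and then pass between this identity and the equation \eqref{eq:halfharmeq} via the sphere constraint $|u|^2\equiv 1$.

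First, I would use $|u|^2\equiv 1$ to rewrite $\Omega_{ik}$ in a manifestly antisymmetric form. Direct expansion gives
\[
\Omega_{ik}(x,y) = \frac{u^i(x)(u^k(x)-u^k(y)) - u^k(x)(u^i(x)-u^i(y))}{|x-y|^{1/2}} = \frac{u^k(x)u^i(y) - u^i(x)u^k(y)}{|x-y|^{1/2}},
\]
so that the numerator $A_{ik}(x,y) := u^k(x)u^i(y) - u^i(x)u^k(y)$ satisfies $A_{ik}(y,x) = -A_{ik}(x,y)$. Pairing $\div_{\frac{1}{2}}\Omega_{ik}$ with $\varphi \in C_c^\infty(\R)$ via \eqref{eq:divs}, and using the antisymmetry of both $A_{ik}$ and $\varphi(x)-\varphi(y)$ under $(x,y)\leftrightarrow(y,x)$ to symmetrize the $\varphi(y)$ term, yields
\[
\div_{\frac{1}{2}}\Omega_{ik}[\varphi] = \int_\R\int_\R \frac{A_{ik}(x,y)\,(\varphi(x)-\varphi(y))}{|x-y|^2}\,dx\,dy = 2\int_\R\int_\R \frac{A_{ik}(x,y)\,\varphi(x)}{|x-y|^{2}}\,dx\,dy.
\]
Adding and subtracting $u^i(x)u^k(x)$ in the inner $y$-integral isolates the principal-value integrals defining $\laph u^i(x)$ and $\laph u^k(x)$, and gives, distributionally,
\[
\div_{\frac{1}{2}}\Omega_{ik} \;=\; 2\bigl(u^i\,\laph u^k - u^k\,\laph u^i\bigr).
\]

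The equivalence then follows. For the direction \eqref{eq:halfharmeq}$\Rightarrow$\eqref{eq:ourheldiv0}, substituting $\laph u^i = u^i|d_{\frac{1}{2}} u|_2^2$ makes $u^i\laph u^k = u^iu^k|d_{\frac{1}{2}} u|_2^2 = u^k\laph u^i$. For the converse: from $u^i\laph u^k = u^k\laph u^i$ for every $i,k$, multiplying by $u^k$ and summing, together with $|u|^2=1$, yields $\laph u^i = u^i \sum_k u^k\,\laph u^k$; and the pointwise sphere identity
\[
\sum_k u^k(x)(u^k(x)-u^k(y)) = 1 - u(x)\cdot u(y) = \tfrac{1}{2}|u(x)-u(y)|^2
\]
identifies $\sum_k u^k\,\laph u^k$ with $|d_{\frac{1}{2}} u|_2^2$, up to the multiplicative normalization built into the paper's definitions of $\laph$ and $|d_{\frac{1}{2}} u|_2^2$, recovering \eqref{eq:halfharmeq}.

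The main technical obstacle is that $\laph u^k$ lies only in $\dot H^{-1/2}(\R)$, so the pointwise products $u^i\laph u^k$ and $\sum_k u^k\laph u^k$ must be read distributionally. I would keep everything paired with $\varphi \in C_c^\infty(\R)$, interpreting $u^i\laph u^k[\varphi]$ as the duality pairing $\langle \laph u^k,\varphi u^i \rangle$ on $\dot H^{-1/2}\times \dot H^{1/2}$; this is well-defined because $|u|\equiv 1$ gives $u^i \in L^\infty(\R)$, so $\varphi u^i \in \dot H^{1/2}(\R)$. Fubini in the swap-and-average step is justified by the bound $|A_{ik}(x,y)|\le 2|u(x)-u(y)|$ together with $u\in \dot H^{1/2}(\R)$.
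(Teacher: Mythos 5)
Your argument is correct and follows essentially the same route as the paper: both proofs reduce the lemma to the weak identity $\div_{\frac{1}{2}}\Omega_{ik}[\varphi]=\langle \laph u^k, u^i\varphi\rangle-\langle \laph u^i, u^k\varphi\rangle$ (you obtain it by antisymmetrizing the kernel $A_{ik}$ and symmetrizing in $(x,y)$, the paper by the discrete product rule for $d_{\frac{1}{2}}$ --- the same computation), and then use the symmetry of $u^iu^k|d_{\frac{1}{2}}u|_2^2$ in $i,k$ for one direction and the constraint $|u|\equiv 1$ for the other. The only substantive differences are that you make the converse explicit by contracting with $u^k$ and summing (where the paper delegates the equivalence of $u\wedge\laph u=0$ with \eqref{eq:halfharmeq} to \cite{Millot-Sire-2015}), and that your intermediate splitting into $2\int\!\!\int A_{ik}(x,y)\varphi(x)|x-y|^{-2}\,dx\,dy$ is only a principal value (the bound $|A_{ik}|\leq 2|u(x)-u(y)|$ does not give absolute convergence of that piece alone for $u\in\dot H^{1/2}$), so the duality pairings with $u^i\varphi$ that you propose in your last paragraph should indeed be kept throughout rather than passing through pointwise $\laph u^k$.
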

Let us remark that in \cite{DaLio-Riviere-CAG} Da Lio and Rivi\`{e}re obtained an almost-conservation law for \emph{horizontal} fractional harmonic maps. As a consequence of Lemma~\ref{la:conservationlaw} above we obtain a new proof of
\begin{theorem}\label{th:halfharmreg}
Half-harmonic maps, that is solutions  $u \in \dot{H}^{\frac{1}{2}}(\R,\S^{N-1})$ to \eqref{eq:halfharmeq} are continuous.
\end{theorem}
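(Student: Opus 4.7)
The plan is to adapt H\'elein's regularity argument for harmonic maps into spheres, sketched in the introduction, to the fractional setting, using the conservation law of Lemma~\ref{la:conservationlaw} together with the fractional div-curl estimates of Theorem~\ref{th:clmstypestimate} and Proposition~\ref{pr:localclms}.

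First, I would invoke Lemma~\ref{la:conservationlaw} to replace the half-harmonic map equation \eqref{eq:halfharmeq} by the equivalent conservation law $\div_{1/2}\Omega_{ik}=0$, with $\Omega_{ik}$ as in \eqref{eq:sphereomegadef}. Contracting $\Omega_{ik}$ with $d_{1/2}u^k$ and using the sphere constraint $|u|\equiv 1$ in the form $\sum_k u^k(x)(u^k(x)-u^k(y))=\tfrac{1}{2}|u(x)-u(y)|^2$, one arrives at the pointwise decomposition
\[
\laph u^i(x) = \Omega_{ik}\cdot d_{1/2}u^k(x) + \tfrac{1}{2}R^i(x),
\]
where
\[
R^i(x) := \int_{\R}\frac{(u^i(x)-u^i(y))\,|u(x)-u(y)|^2}{|x-y|^{2}}\,dy
\]
is a cubic remainder. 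Thus the nonlinearity splits into a fractional div-curl quantity plus a cubic correction.

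Second, I would test against cutoffs $\varphi\in C_c^\infty(B(x_0,r))$. The div-curl term is controlled by Proposition~\ref{pr:localclms}:
\[
\Big|\int\varphi\,\Omega_{ik}\cdot d_{1/2}u^k\,dx\Big|
\aleq\bigl([\varphi]_{BMO}+r^{-1}\|\varphi\|_{L^1(\R)}\bigr)\,[u]_{W^{1/2,2}(B(x_0,\Lambda r))}^2,
\]
using the pointwise bound $|\Omega_{ik}(x,y)|\leq 2|d_{1/2}u(x,y)|$ (which follows from $|u|\leq 1$) to dominate both factors in Proposition~\ref{pr:localclms} by the Gagliardo energy. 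For the remainder, I would symmetrize in $(x,y)$, exploiting that the integrand of $R^i$ is antisymmetric, to rewrite
\[
\int R^i\,\varphi\,dx
=\tfrac{1}{2}\iint d_{1/2}u^i(x,y)\,d_{1/2}\varphi(x,y)\,|d_{1/2}u(x,y)|^2\,\frac{dx\,dy}{|x-y|},
\]
and then apply H\"older in the mixed norms to bound this by a Gagliardo/$BMO$-type norm of $\varphi$ times $[u]_{W^{1/2,2}(B(x_0,\Lambda r))}^2$.

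Third, both bounds are quadratic in the local Gagliardo energy, so under an $\varepsilon$-smallness hypothesis on $[u]_{W^{1/2,2}(B(x_0,\Lambda r))}$ a standard cutoff choice yields a Morrey-type decay
\[
[u]_{W^{1/2,2}(B(x_0,\theta r))}^2\leq\tfrac{1}{2}\,[u]_{W^{1/2,2}(B(x_0,r))}^2
\]
for some universal $\theta\in(0,1)$. Iteration produces $[u]_{W^{1/2,2}(B(x_0,\rho))}\aleq\rho^\alpha$ for some $\alpha>0$, and the fractional Morrey--Campanato (Dirichlet growth) lemma then gives local H\"older continuity of $u$, hence continuity.

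The main obstacle is the cubic remainder $R^i$: globally only $L^1$ and not itself a div-curl quantity, it is not covered directly by Theorem~\ref{th:clmstypestimate} or Corollary~\ref{co:wentetypestimate}. The antisymmetrization identity above is the crucial step; it exposes an extra factor of $d_{1/2}u$ in the integrand, giving the same quadratic-smallness scaling as the div-curl term and allowing the $\varepsilon$-regularity iteration to close.
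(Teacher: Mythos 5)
Your decomposition of the nonlinearity is exactly the paper's: the div-curl part $\Omega_{ik}\cdot d_{1/2}u^k$ plus the cubic remainder (your $\tfrac12 R^i$ is the paper's $T$), and the symmetrization trick that exposes an extra difference quotient of $\varphi$ in the remainder is also the paper's key observation. Where you diverge is in how the estimates are then used. The paper stays \emph{global}: it shows $\bigl|\int T\varphi\bigr|\aleq\|\lapv\varphi\|_{L^{(2,\infty)}(\R)}$ with constant $[u]_{W^{1/2,2}(\R)}^3$ (via $\|(\,(-\lap)^{1/6}u)^3\|_{L^{(2,1)}}\aleq[u]_{W^{1/2,2}}^3$, an interpolation/Lorentz--Sobolev step), and then feeds the whole equation into the fractional Wente lemma, Corollary~\ref{co:wentetypestimate}, obtaining $\lapv u\in L^{(2,1)}_{loc}$ and hence continuity with no smallness hypothesis, no cutoffs and no iteration. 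You instead run a \emph{localized} $\varepsilon$-regularity scheme with Proposition~\ref{pr:localclms} and a Morrey decay iteration; this is legitimate and in fact is essentially the paper's proof of the more general Theorem~\ref{th:liegroupreg} specialized to $s=\tfrac12$, $p=2$, $n=1$, $\mathcal{N}=\S^{N-1}$. Your route yields the stronger conclusion of H\"older continuity, at the price of more machinery; the paper's Section~3 route is softer and also gives an $L^\infty$ bound.

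Two points in your write-up need repair. First, the symmetrized remainder has a homogeneity slip: the antisymmetrization gives $\tfrac12\iint(\varphi(x)-\varphi(y))(u^i(x)-u^i(y))|u(x)-u(y)|^2|x-y|^{-2}\,dx\,dy$, which equals $\iint d_{1/2}u^i\,d_{1/2}\varphi\,|d_{1/2}u|^2\,dx\,dy$ \emph{without} the extra factor $|x-y|^{-1}$; written against the measure $\frac{dx\,dy}{|x-y|}$ the correct integrand is $|d_{1/6}u|^3\,|d_{1/2}\varphi|$, as in the paper. Second, the decay inequality $[u]^2_{W^{1/2,2}(B(x_0,\theta r))}\leq\tfrac12[u]^2_{W^{1/2,2}(B(x_0,r))}$ cannot hold as cleanly stated: with a cutoff test function the nonlocal interactions produce tail terms of the form $\sum_{k\geq1}(2^k\Lambda)^{-\sigma}[u]^2_{W^{1/2,2}(B(x_0,2^k\Lambda r))}$, and the iteration must be closed with a lemma that tolerates such tails (cf.\ \cite[Lemma A.8]{Blatt-Reiter-Schikorra-2016}, used in Sections~4 and~5 of the paper). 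Neither issue is fatal, but both must be addressed for the localized argument to close.
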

Regularity for half-harmonic maps was first proved in the pioneering work by Da Lio--Rivi\`{e}re \cite{DaLio-Riviere-1Dsphere}. Another approach was given by Millot--Sire \cite{Millot-Sire-2015} who interpreted the half-harmonic map equation \eqref{eq:halfharmeq} as the free boundary condition of a harmonic map $U: \R^{2}_+ \to \R^N$
\[
 \begin{cases}
\lap U^i = 0 \quad &\mbox{on $\R^2_+$},\\
U = u \quad &\mbox{on $\R \times \{0\}$},
 \end{cases}
\]
observing that then $\partial_\nu U = c\ \laph u$ on $\R$. Then regularity theory follows from the known regularity results for such free boundary harmonic maps, see \cite{Scheven-2006}.

The proof of Theorem~\ref{th:halfharmreg} that we give here follows very closely the original proof for harmonic maps into spheres by H\'{e}lein \cite{Helein90}.

\begin{proof}[Proof of Theorem~\ref{th:halfharmreg}]As in the local case we rewrite the right-hand side of \eqref{eq:halfharmeq}. Recall that we use Einstein's summation convention. With $\Omega_{ik}$ from \eqref{eq:sphereomegadef} we find
\begin{equation}\label{eq:halfharmrew}
 \laph u^i(x) = u^i(x)\ |d_{\frac{1}{2}} u|_2^2(x) \equiv  u^i(x)\  \langle d_{\frac{1}{2}} u^k , d_{\frac{1}{2}}u^k \rangle (x)=
  \langle \Omega_{ik}, d_{\frac{1}{2}}u^k \rangle(x) + T(x)
\end{equation}
where 
\[
 T(x) := \langle d_{\frac{1}{2}} u^i, u^k(x)\, d_{\frac{1}{2}}u^k \rangle (x).
\]
As for $T$ we have
\begin{equation}\label{eq:ourhell2infty}
 \left |\int_{\R} T(x)\ \varphi(x)\ dx \right | \aleq \|\lapv \varphi\|_{L^{(2,\infty)}(\R)} \quad\text{ for every }\varphi\in C^\infty_c(\R).
\end{equation}
Assuming \eqref{eq:ourhell2infty} and in view of Lemma~\ref{la:conservationlaw} we have found that the equation \eqref{eq:halfharmrew} exhibits a fractional div-curl structure on the right-hand side. Thus, it falls into the realm of the fractional Wente lemma, Corollary~\ref{co:wentetypestimate}. 

Hence, Theorem~\ref{th:halfharmreg} is proven once Lemma~\ref{la:conservationlaw} and \eqref{eq:ourhell2infty} are established.
%
\end{proof}

\subsection{Proof of Lemma~\ref{la:conservationlaw}}
\begin{proof}[Proof of \eqref{eq:halfharmeq} $\Rightarrow$ \eqref{eq:ourheldiv0}]
We compute the fractional divergence  $\div_{\frac{1}{2}} \Omega_{ik}$, see \eqref{eq:divs}. For any $\varphi \in C_c^\infty(\R)$,
\[
\begin{split}
  &\int_{\R}\int_{\R} \Omega_{ik}(x,y)\, d_{\frac{1}{2}} \varphi(x,y)\, \frac{dx\ dy}{|x-y|} \\
  &=\int_{\R}\int_{\R} \brac{u^i(x)\, d_{\frac{1}{2}} \varphi(x,y)\ d_{\frac{1}{2}} u^k(x,y) - u^k(x)\, d_{\frac{1}{2}} \varphi(x,y)\, d_{\frac{1}{2}} u^i(x,y)} \frac{dx\ dy}{|x-y|}. \\
\end{split}
 \]
Now a simple computation confirms the product rule for $d_{\frac{1}{2}}$,
\[
 u^i(x)\, d_{\frac{1}{2}} \varphi(x,y) = d_{\frac{1}{2}} (u^i \varphi)(x,y) - d_{\frac{1}{2}} u^i(x,y)\, \varphi(y).
\]
Thus, 
\[
\begin{split}
  &\int_{\R}\int_{\R} \Omega_{ik}(x,y) d_{\frac{1}{2}} \varphi(x,y)\frac{dx\ dy}{|x-y|} \\
  &=\int_{\R}\int_{\R} \brac{d_{\frac{1}{2}} (u^i\varphi)(x,y)\ d_{\frac{1}{2}} u^k(x,y) - d_{\frac{1}{2}} (u^k\varphi)(x,y)\, d_{\frac{1}{2}} u^i(x,y)} \frac{dx\ dy}{|x-y|} \\
  &\quad -\int_{\R}\int_{\R} \varphi(y)\brac{ d_{\frac{1}{2}} u^i(x,y)\ d_{\frac{1}{2}} u^k(x,y) - d_{\frac{1}{2}} u^k(x,y)\, d_{\frac{1}{2}} u^i(x,y)} \frac{dx\ dy}{|x-y|}. \\
\end{split}
 \]
The last line is zero. With \eqref{eq:halfharmeq} we find
\[
\begin{split}
 &\int_{\R}\int_{\R} \brac{d_{\frac{1}{2}} (u^i\varphi)(x,y)\ d_{\frac{1}{2}} u^k(x,y) - d_{\frac{1}{2}} (u^k\varphi)(x,y)\, d_{\frac{1}{2}} u^i(x,y)} \frac{dx\ dy}{|x-y|}\\
 &=\int_{\R} \brac{u^i(x)\ u^k(x)  - u^k(x)\ u^i(x)  } \varphi(x)\ \|d_{\frac{1}{2}} u\|_2^2(x)\ dx = 0.
\end{split}
\]
\eqref{eq:ourheldiv0} is established.
\end{proof}

\begin{proof}[Proof of \eqref{eq:ourheldiv0} $\Rightarrow$ \eqref{eq:halfharmeq}]
Equation \eqref{eq:ourheldiv0} readily implies 
\[
 0=u^i(x) \laph u^j(x) - u^j(x) \laph u^i(x),
\]
that is
\[
 0 = u \wedge \laph u,
\]
which is equivalent to
\[
 \laph u \perp T_u \S^{N-1}.
\]
Thus, $u$ is a half-harmonic map.
\end{proof}

\subsection{Proof of \eqref{eq:ourhell2infty}}
For any $x,y \in \R$,
\[
 u^k(x)\, d_{\frac{1}{2}}u^k(x,y) = u^k(x)\, \frac{(u^k(x)-u^k(y))}{|x-y|^{\frac{1}{2}}}.
\]
Since $|u(x)| \equiv 1$ and thus $(u(x)-u(y))\cdot(u(x)+u(y)) = 0$, we find
\[
 u^k(x)\, d_{\frac{1}{2}}u^k(x,y) = \frac{1}{2} \frac{|u(x)-u(y)|^2}{|x-y|^{\frac{1}{2}}}.
\]
Thus,
\[
 \int_{\R} \varphi(x)\, u^k(x)\ d_{\frac{1}{2}} u^i\cdot d_{\frac{1}{2}}u^k(x)\ dx =\frac{1}{2} \int_{\R}\int_{\R} \varphi(x)\  \frac{u^i(x)-u^i(y)}{|x-y|^{\frac{1}{2}}}\, \frac{|u(x)-u(y)|^2}{|x-y|^{\frac{1}{2}}} \frac{dy\, dx}{|x-y|}.
 \]
Interchanging $x$ and $y$, we arrive at
\[
\begin{split}
 \int_{\R} \varphi(x)\ u^k(x)\ d_{\frac{1}{2}} u^i\cdot d_{\frac{1}{2}}u^k(x)\ dx
 &\leq \frac{1}{4} \int_{\R} \int_{\R}\frac{|u(x)-u(y)|^3\ |\varphi(x)-\varphi(y)|}{|x-y|^{2}}\ dx\ dy\\
 &=\int_{\R} \int_{\R} |d_{\frac{1}{6}}u(x,y))|^3\ |d_{\frac{1}{2}}\varphi(x,y)|\ \frac{dx\ dy}{|x-y|}.
\end{split}
 \]
From this one obtains by interpolation,
\[
 \int_{\R} \varphi(x)\ u^k(x)\ d_{\frac{1}{2}} u^i\cdot d_{\frac{1}{2}}u^k(x)\ dx \aleq \|((-\lap)^{\frac{1}{6}} u)^3\|_{L^{(2,1)}(\R)}\ \|\lapv \varphi\|_{L^{(2,\infty)}(\R)}.
\]
Moreover, by Sobolev embedding,
\[
\begin{split}
 \|((-\lap)^{\frac{1}{6}} u)^3\|_{L^{(2,1)}(\R)} = \|(-\lap)^{\frac{1}{6}} u\|_{L^{(6,3)}(\R)}^3 &\aleq \|\lapv u\|_{L^{(2,3)}(\R)}^3\\
 &\aleq \|\lapv u\|_{L^2(\R)}^3 = [u]_{W^{\frac{1}{2},2}(\R)}^3.
 \end{split}
\]
This establishes \eqref{eq:ourhell2infty}.
\qed

\section{Fractional   div-curl quantities and systems with nonlocal antisymmetric potential and half-harmonic maps into general manifolds}\label{s:antisym}
Here we study the regularity theory for a nonlocal analogue of \eqref{eq:confoeq}. Let $u \in \dot{H}^{\frac{1}{2}}(\R,\R^N)$ be a solution to
\begin{equation}\label{eq:nlocantisym}
 \laph u^i = \Omega_{ij} \cdot d_{\frac12} u^j,
\end{equation}
for some antisymmetric $\Omega_{ij} = -\Omega_{ji} \in L^2(\Ep^1_{od} \R)$.

Observe that the antisymmetric potential $\Omega$ is not a pointwise function, but rather acts as a nonlocal operator: one could write the equation above as
\[
 \laph u^i = \Omega_{ij}(u^j),
\]
where
\[
 \Omega_{ij}(f)(x) := \int_{\R} \Omega_{ij}(x,y) (f(x)-f(y))\ \frac{dy}{|x-y|^{\frac{3}{2}}}.
\]
%
In \cite{DaLio-Riviere-1Dmfd} Da Lio and Rivi\`{e}re studied the regularizing effects of the equation
\[
 \laph u^i(x) = \Omega_{ij}(x)\, \lapv u^j(x),
\]
where $\Omega_{ij} = -\Omega_{ji} \in L^2(\R)$ is a \emph{function}. In \cite{Schikorra-eps} the second-named author studied the regularity theory for another class of antisymmetric \emph{nonlocal} operators,
\[
 \laph u^i = \Omega_{ij} \mathcal{H}[\lapv u^j],
\]
where $\mathcal{H}$ is the Hilbert transform.

Here, in the spirit of the celebrated work of Rivi\`{e}re \cite{Riviere-2007}, we develop the regularity theory of nonlocal antisymmetric systems of the form \eqref{eq:nlocantisym}. Namely, we have
\begin{theorem}[Regularity for systems with nonlocal antisymmetric operator]\label{th:nlao}
Let $u \in \dot{H}^{\frac{1}{2}}(\R,\R^N)$ be a weak solution to 
\begin{equation}\label{eq:genmfdharmmapeq}
 \laph u^i = \Omega_{ij} \cdot d_{\frac{1}{2}} u^j + R \quad \mbox{in $\R$}.
\end{equation}
Assume that $\Omega_{ij} = - \Omega_{ji} \in L^2(\Ep^1_{od} \R)$ and that $R$ satisfies, for any $\varphi \in C_c^\infty(\R)$,
\begin{equation}\label{eq:genmfd:errorestimate}
 \int_{\R} R \varphi \aleq \int_{\R}|\varphi(x)| \int_{\R} |d_{\frac{1}{3}}u(x,y)|^3\ \frac{dy\ dx}{|x-y|} + \int_{\R}\int_{\R} |d_{\frac{1}{4}}u(x,y)|^2\ |d_{\frac{1}{2}}\varphi(x,y)| \frac{dy\ dx}{|x-y|}.
\end{equation}
Then $u$ is H\"older continuous.
\end{theorem}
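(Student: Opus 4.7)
The plan is to adapt Rivi\`ere's gauge argument \cite{Riviere-2007} to the nonlocal setting, substituting Theorem~\ref{th:clmstypestimate} and Proposition~\ref{pr:localclms} for Theorem~\ref{th:clms}. Since H\"older continuity is a local matter I first fix a ball $B(x_0,r)\subset\R$ and, by absolute continuity of the $L^{2}$-norm, arrange that $\|\Omega\|_{L^{2}(\Ep^{1}_{od}B(x_{0},\Lambda r))}$ is as small as needed. On such a ball I construct a fractional Uhlenbeck-type gauge $P\in \dot{H}^{\frac{1}{2}}(\R,SO(N))$ with $[P]_{W^{\frac{1}{2},2}(\R)}\aleq \|\Omega\|_{L^{2}(\Ep^{1}_{od}\R)}$ such that the gauged potential
\[
\widetilde{\Omega}_{ij}(x,y) := P_{ik}(x)\,\Omega_{k\ell}(x,y)\,P_{j\ell}(y)\,-\,(d_{\frac{1}{2}}P)_{ik}(x,y)\,P_{jk}(y)
\]
is fractionally divergence free, $\div_{\frac{1}{2}}\widetilde{\Omega}_{ij}=0$. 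Existence of such a $P$ should follow, in the spirit of \cite{Schikorra-2010,DaLio-Riviere-1Dmfd,Schikorra-eps}, by minimizing a quadratic $\dot{H}^{\frac{1}{2}}$-energy over $SO(N)$-valued maps whose Euler--Lagrange equation, combined with the antisymmetry of $\Omega$, will produce the required nonlocal conservation law.

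Next I use the nonlocal product rule
\[
d_{\frac{1}{2}}u^{j}(x,y) = P_{\ell j}(y)\,d_{\frac{1}{2}}(Pu)^{\ell}(x,y) - (d_{\frac{1}{2}}P)_{\ell j}(x,y)\,u^{\ell}(x)
\]
to reshape the right-hand side of \eqref{eq:genmfdharmmapeq} and then multiply the equation by $P_{mi}(x)$. With $v:=Pu$ the resulting system should take the form
\[
\laph v^{m}(x) = \widetilde{\Omega}_{mn}\cdot d_{\frac{1}{2}}v^{n}(x) + \widetilde{R}(x),
\]
where the leading term is a bona fide fractional div-curl quantity thanks to $\div_{\frac{1}{2}}\widetilde{\Omega}=0$, and $\widetilde{R}$ collects (i) the commutator between $P$ and $\laph$, (ii) a remainder of the form $P_{mi}(x)\,\Omega_{k\ell}(x,y)\,(P_{jk}(x)-P_{jk}(y))\,d_{\frac{1}{2}}u^{j}$ arising from breaking the $y$-dependence of $P$, and (iii) the original $R$. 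Each of these contributions should, after pairing with a test function $\varphi$, be controlled by the right-hand side of \eqref{eq:genmfd:errorestimate}, i.e.\ by cubic expressions $|d_{\frac{1}{3}}u|^{3}|\varphi|$ and quadratic expressions $|d_{\frac{1}{4}}u|^{2}|d_{\frac{1}{2}}\varphi|$, via interpolation and Sobolev embedding in Lorentz spaces, exactly as in the proof of \eqref{eq:ourhell2infty}.

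Combining Proposition~\ref{pr:localclms} with this error bound then produces, for every $\varphi\in C^{\infty}_{c}(B(x_{0},\rho))$ with $\rho\le r$, an estimate of the shape
\[
\int_{\R}\varphi\,\laph v^{m}\,dx \,\aleq\, \bigl([\varphi]_{BMO}+\rho^{-1}\|\varphi\|_{L^{1}(\R)}\bigr)\,\varepsilon(\rho)\,[u]_{W^{\frac{1}{2},2}(B(x_{0},\Lambda\rho))},
\]
with $\varepsilon(\rho)\to 0$ as $\rho\to 0$. A standard Hodge-type duality argument converts this into a Morrey decay
\[
\int_{B(x_{0},\rho)}|\lapv v|^{2}\,dx \,\leq\, \theta\int_{B(x_{0},\Lambda\rho)}|\lapv v|^{2}\,dx + C\rho^{\alpha},
\]
with $\theta<1$ once the initial smallness is secured. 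Iterating places $\lapv v$ in a suitable Morrey space, hence $v\in C^{0,\alpha/2}_{\loc}$ by fractional Morrey embedding; since $P\in\dot{H}^{\frac{1}{2}}\hookrightarrow VMO$ and $u=P^{T}v$, this transfers to H\"older continuity of $u$. The main obstacle is the gauge construction: $P$ is a function of one variable while $\Omega$ lives off-diagonal on $\R\times\R$, so the precise way $P(x)$ and $P(y)$ should be inserted into $\Omega(x,y)$ to force $\div_{\frac{1}{2}}\widetilde{\Omega}=0$ is delicate, and the accompanying Euler--Lagrange analysis has to be sharp enough that every commutator remainder actually lies in the restrictive dual class prescribed by \eqref{eq:genmfd:errorestimate}, rather than merely being bounded in $L^{1}$.
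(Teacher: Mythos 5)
Your overall strategy (construct a gauge, apply the localized div-curl estimate of Proposition~\ref{pr:localclms}, then run a Morrey-decay iteration and conclude by Sobolev embedding on Morrey spaces) is the same as the paper's, but two of your steps have genuine problems. First, the gauge. Your candidate $\widetilde{\Omega}_{ij}(x,y)=P_{ik}(x)\Omega_{k\ell}(x,y)P_{j\ell}(y)-(d_{\frac{1}{2}}P)_{ik}(x,y)P_{jk}(y)$ cannot in general be made divergence free: unlike in the local case, $d_{\frac{1}{2}}P(x,y)\,P^T(y)$ is \emph{not} an antisymmetric matrix, and minimizing $\mathcal{F}(Q)=\iint|d_{\frac{1}{2}}Q-Q\Omega|^2$ over $SO(N)$-valued maps only forces the antisymmetric part of $d_{\frac{1}{2}}P\,P^T(y)-P(x)\Omega P^T(y)$ to be divergence free (the admissible variations $e^{t\alpha\varphi}P$ only see $\alpha\in so(N)$). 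The paper's resolution (Theorem~\ref{th:gauge}) is to take exactly that antisymmetric part, which, using $\Omega=-\Omega^T$, is the symmetrized expression $\Omega^P=\tfrac12\bigl(d_{\frac{1}{2}}P(x,y)(P^T(y)+P^T(x))-P(x)\Omega(x,y)P^T(y)-P(y)\Omega(x,y)P^T(x)\bigr)$; the discrepancy between this and your $\widetilde{\Omega}$ is dumped into the remainder. You correctly flag this as the main obstacle, but without this identification the argument does not close.

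Second, your claim that the commutator between $P$ and $\laph$ and the term $P_{mi}(x)\Omega_{k\ell}(x,y)(P_{jk}(x)-P_{jk}(y))d_{\frac{1}{2}}u^j$ are controlled by the right-hand side of \eqref{eq:genmfd:errorestimate} cannot be correct: that right-hand side involves only $u$, whereas these remainders involve $P$ and $\Omega$, and in Theorem~\ref{th:nlao} there is no relation between $\Omega$ and $u$. The paper instead bounds such remainders (Lemma~\ref{la:gettingomegap}) by $\iint|R_1(x,y)|\,|d_{\frac{1}{4}}g(x,y)|\,|d_{\frac{1}{4}}u(x,y)|\,|\varphi(x)|\,\frac{dx\,dy}{|x-y|}$ with $R_1\in L^2(\Ep^1_{od}\R)$ and $g\in\dot H^{\frac{1}{2}}$ built from $P$ and $\Omega$, and absorbs them in the decay iteration using the local smallness of $\|\lapv P\|_{L^2}$ and $\|\Omega^P\|_{L^2(\Ep^1_{od}B(x_0,\Lambda^2 r))}$ on small balls. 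Relatedly, the paper avoids your substitution $v=Pu$ altogether: it keeps the equation in divergence form, $\div_{\frac{1}{2}}(P_{ik}d_{\frac{1}{2}}u^k)=\Omega^P_{ij}\cdot d_{\frac{1}{2}}u^\ell\,P_{j\ell}+R$, and only at the duality step invokes a separate commutator lemma comparing $\int P\,d_{\frac{1}{2}}u\cdot d_{\frac{1}{2}}\varphi$ with $\int P\,\lapv u\,\lapv\varphi$, with an error of the form $\|\lapv\varphi\|_{L^2}\,\|\lapv P\|_{L^2(B)}\,\|\lapv u\|_{L^{(2,\infty)}(B)}$ plus tail terms. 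If you repair your argument along these lines (antisymmetrized gauge, remainders absorbed by local smallness rather than by \eqref{eq:genmfd:errorestimate}), it becomes the paper's proof.
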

We postpone the proof and first mention an application. 
Regularity theory for half-harmonic maps from $\R$ into a smooth, compact manifold $\mathcal{N} \subset \R^N$ without boundary follows from Theorem~\ref{th:nlao}. Indeed, 
just as the harmonic map equation \eqref{eq:harmmapeqperp} can be brought into the form \eqref{eq:confoeq}, the half-harmonic map equation 
\begin{equation}\label{eq:halfharmperp}
 \laph u \perp T_u \mathcal{N} \quad \mbox{in $\R$}
\end{equation}
can be brought into the form \eqref{eq:genmfdharmmapeq}.
\begin{proposition}\label{pr:genmfdEL}
Let $u \in \dot{H}^{\frac{1}{2}}(\R,\mathcal{N})$ be a half-harmonic map into a general smooth manifold without boundary $\mathcal{N}\subset \R^N$, i.e., a distributional solution to \eqref{eq:halfharmperp}. Then $u$ solves \eqref{eq:genmfdharmmapeq} for some $\Omega_{ij} = - \Omega_{ji} \in L^2(\Ep^1_{od} \R)$ and some $R$ which satisfies \eqref{eq:genmfd:errorestimate}.
\end{proposition}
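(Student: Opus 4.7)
The plan is to adapt Rivi\`ere's construction of an antisymmetric potential from the local case. Choose a smooth orthonormal frame $\{\nu_\alpha\}$ of the normal bundle of $\mathcal{N}$ (locally on $\mathcal{N}$; the general case assembles via partition of unity, or one may work directly with the globally defined projector $P_N(p)=\sum_\alpha \nu_\alpha(p)\otimes\nu_\alpha(p)$), and set
\[
  \Omega_{ij}(x,y) := \sum_{\alpha}\Bigl[\nu_\alpha^i(u(x))\,d_{\frac{1}{2}}(\nu_\alpha^j(u))(x,y)-\nu_\alpha^j(u(x))\,d_{\frac{1}{2}}(\nu_\alpha^i(u))(x,y)\Bigr].
\]
Antisymmetry $\Omega_{ij}=-\Omega_{ji}$ is built in, and the Lipschitz estimate $|d_{\frac{1}{2}}(\nu_\alpha(u))|\aleq |d_{\frac{1}{2}}u|$ gives $\|\Omega\|_{L^2(\Ep^1_{od}\R)}\aleq [u]_{W^{\frac{1}{2},2}(\R)}<\infty$.

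Next, I test the PDE $\laph u\perp T_u\mathcal{N}$ against a scalar $\varphi\in C_c^\infty(\R)$ in the $i$-th component. The tangent part $P_T(u)(\varphi e_i)$ is admissible and annihilates $\laph u$, leaving only the normal part:
\[
  \int_\R\laph u^i\,\varphi\,dx \;=\; \sum_\alpha\iint_{\R^2}d_{\frac{1}{2}}u^j(x,y)\,d_{\frac{1}{2}}\bigl(\nu_\alpha^j(u)\,\nu_\alpha^i(u)\,\varphi\bigr)(x,y)\,\frac{dx\,dy}{|x-y|}.
\]
Expanding with the product rule $d_{\frac{1}{2}}(fg)=f(x)d_{\frac{1}{2}}g+g(y)d_{\frac{1}{2}}f$ twice and comparing with a direct expansion of $\int\Omega_{ij}\cdot d_{\frac{1}{2}}u^j\,\varphi$, the difference $\int R\,\varphi$ equals
\[
  \iint_{\R^2}\sum_\alpha\Bigl[\nu_\alpha^i(u(x))E_\alpha\,d_{\frac{1}{2}}\varphi + (\varphi(x)+\varphi(y))\,d_{\frac{1}{2}}(\nu_\alpha^i(u))\,E_\alpha + \bigl(\nu_\alpha^i(u(y))\varphi(y)-\nu_\alpha^i(u(x))\varphi(x)\bigr)G_\alpha\Bigr]\frac{dx\,dy}{|x-y|},
\]
with the scalar kernels $E_\alpha(x,y):=\nu_\alpha^j(u(x))\,d_{\frac{1}{2}}u^j(x,y)$ and $G_\alpha(x,y):=d_{\frac{1}{2}}(\nu_\alpha^j(u))\cdot d_{\frac{1}{2}}u^j$.

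The pivotal quantitative input for bounding $R$ is the tangency identity
\[
  |\nu_\alpha(u(x))\cdot(u(x)-u(y))|\leq C_{\mathcal{N}}\,|u(x)-u(y)|^2,
\]
valid because $u(x),u(y)\in\mathcal{N}$ (quadratic graph expansion of $\mathcal{N}$ over $T_{u(x)}\mathcal{N}$). It yields $|E_\alpha|\aleq |u(x)-u(y)|^2/|x-y|^{1/2}$. Combining this with $|d_{\frac{1}{2}}(\nu_\alpha(u))|\aleq |d_{\frac{1}{2}}u|$, $|G_\alpha|\aleq |u(x)-u(y)|^2/|x-y|$, and the compactness bound $|u(x)-u(y)|\leq\diam(\mathcal{N})$, each summand of $R$ is dominated pointwise---after exploiting the $x\leftrightarrow y$ symmetry of the measure $\tfrac{dx\,dy}{|x-y|}$---by a constant multiple of
\[
 \frac{|u(x)-u(y)|^3\,|\varphi(x)|}{|x-y|^2}\quad\text{or}\quad\frac{|u(x)-u(y)|^2\,|\varphi(x)-\varphi(y)|}{|x-y|^2}.
\]
Recognizing $|d_{\frac{1}{3}}u|^3=|u(x)-u(y)|^3/|x-y|$ and $|d_{\frac{1}{4}}u|^2=|u(x)-u(y)|^2/|x-y|^{1/2}$, these are exactly the two tails permitted in \eqref{eq:genmfd:errorestimate}.

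The main obstacle is the bookkeeping of the remainder: the double product-rule expansion produces several distinct terms and one must verify that every leftover, after invoking the tangency estimate (for the extra factor of $|u(x)-u(y)|$) and the Lipschitz bound on $\nu_\alpha\circ u$, fits exactly one of the two admissible templates in \eqref{eq:genmfd:errorestimate}; the tangency gain is the algebraic identity that makes every leftover close.
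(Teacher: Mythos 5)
Your proposal is correct and follows essentially the same route as the paper: test the equation with the tangential/normal splitting of $\varphi e_i$, identify the main term after expanding the product rule, and control every leftover by the quadratic tangency estimate --- your inequality $|\nu_\alpha(u(x))\cdot(u(x)-u(y))|\leq C|u(x)-u(y)|^2$ is exactly the paper's Lemma~\ref{la:uxmuytangential}, and your two error templates $|d_{\frac13}u|^3|\varphi|$ and $|d_{\frac14}u|^2|d_{\frac12}\varphi|$ are the ones in \eqref{eq:genmfd:errorestimate}. The only real difference is the packaging of $\Omega$: you antisymmetrize a normal-frame expression $\sum_\alpha\nu_\alpha^i(u(x))\,d_{\frac12}(\nu_\alpha^j(u))$, whereas the paper stays frame-free and antisymmetrizes $d_{\frac12}\Pi^\perp_{ij}(u)\cdot\Pi_{ik}(u(y))$ directly in terms of the projections; the frame-free version sidesteps the fact that a global orthonormal frame of the normal bundle need not exist. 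Your hedge is repairable without retreating to the projector: one can always choose finitely many global smooth normal fields with $\sum_\alpha\nu_\alpha\otimes\nu_\alpha=\Pi^\perp$ (an overcomplete Parseval frame of the normal bundle, obtained from a finite generating set of sections by applying the inverse square root of its Gram operator), and every step of your computation --- including the tangency estimate and the reconstruction of $\Pi^\perp$ in what you call Term C --- only uses these two properties, not orthonormality.
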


The proof of Proposition~\ref{pr:genmfdEL}, which we give in Section~\ref{s:genmfdEL}, follows essentially the local argument used to obtain \eqref{eq:confoeq}. The only difference is that while $d u$ is tangential, $d_su(x,y)$ is not --- this is why the error term
$R$ appears. But since $d_s u(x,y)$ is tangential up to a quadratic error, see Lemma~\ref{la:uxmuytangential}, $R$ is benign.

Thus, as a corollary of Theorem~\ref{th:nlao} and Proposition~\ref{pr:genmfdEL}, we obtain

\begin{theorem}[Da Lio, Rivi\`{e}re \cite{DaLio-Riviere-1Dmfd}]
Half-harmonic maps from the line $\R$ into a general manifold $\mathcal{N}$ are H\"older continuous and in fact smooth. 
\end{theorem}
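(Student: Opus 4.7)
The plan is to obtain the theorem as a two-step consequence of the preceding results: H\"older continuity follows tautologically from Proposition~\ref{pr:genmfdEL} combined with Theorem~\ref{th:nlao}, and smoothness follows by a standard nonlocal bootstrap from the H\"older base case.

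For the H\"older step, given a half-harmonic map $u \in \dot{H}^{\frac{1}{2}}(\R,\mathcal{N})$, that is a distributional solution to \eqref{eq:halfharmperp}, Proposition~\ref{pr:genmfdEL} produces an antisymmetric potential $\Omega_{ij} = -\Omega_{ji} \in L^2(\Ep^1_{od} \R)$ and a remainder $R$ satisfying the structural bound \eqref{eq:genmfd:errorestimate} such that
\[
 \laph u^i = \Omega_{ij} \cdot d_{\frac{1}{2}} u^j + R \quad \mbox{in $\R$}.
\]
The hypotheses of Theorem~\ref{th:nlao} are met verbatim, and hence $u \in C^{0,\alpha}_{\loc}(\R)$ for some $\alpha \in (0,1)$.

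For the smoothness step, I would bootstrap using the geometric form of the equation $\laph u^i = A^i(u)(d_{\frac{1}{2}}u, d_{\frac{1}{2}}u)$, where $A$ is the smooth bilinear tensor encoding the second fundamental form of $\mathcal{N}$. Once $u \in C^{0,\alpha}_{\loc}$, the pointwise bound $|u(x)-u(y)| \aleq |x-y|^{\alpha}$ upgrades the integrability of $d_s u$ near the diagonal and promotes the right-hand side to a subcritical H\"older class; Schauder-type mapping properties of $\laph$ in fractional H\"older scales then yield $u \in C^{0,\alpha'}_{\loc}$ with $\alpha' > \alpha$. Iterating this gain and eventually passing to the higher scales $C^{k,\beta}$ produces $u \in C^\infty_{\loc}(\R)$.

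The main obstacle in the bootstrap is the nonlocality of $\laph$: localizing by a cutoff produces tail contributions that must be reabsorbed via commutator estimates, as in the three-commutator framework of Da Lio--Rivi\`ere~\cite{DaLio-Riviere-1Dsphere}. Alternatively, one can use the harmonic extension viewpoint of Millot--Sire~\cite{Millot-Sire-2015} to reduce smoothness to a classical boundary Schauder estimate for an extension $U: \R^2_+ \to \R^N$ satisfying a free boundary condition. Either route is routine once the H\"older base case is in hand, completing the proof.
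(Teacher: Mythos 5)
Your proposal is correct and follows essentially the same route as the paper: the H\"older continuity is obtained exactly as a corollary of Proposition~\ref{pr:genmfdEL} together with Theorem~\ref{th:nlao}, and the smoothness is deferred to a standard bootstrap (the paper likewise only remarks that higher regularity follows from bootstrapping, since the antisymmetry and precise structure of the right-hand side are no longer needed once H\"older continuity is known, citing \cite{Schikorra-eps}).
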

It suffices to show the H\"older continuity. Higher regularity follows from bootstrapping and the growth of the right-hand side -- the antisymmetry and the precise right-hand side structure are not relevant. See \cite{Schikorra-eps}.

\subsection{Proof of Theorem~\ref{th:nlao}}
As in the local case \eqref{eq:gaugelocal}, the first step is to find a good gauge $P \in \dot{H}^{\frac{1}{2}}(\R,SO(N))$.
\begin{theorem}\label{th:gauge}
For $\Omega_{ij} = -\Omega_{ji} \in L^2(\Ep^1_{od}\R)$ there exists $P \in \dot{H}^{\frac{1}{2}}(\R,SO(N))$ such that
\[
 \div_{\frac{1}{2}} \Omega^P_{ij} = 0 \quad \mbox{for all }i,j \in \{1,\ldots,N\},
\]
where
\[
 \Omega^P = \frac{1}{2} \brac{d_{\frac{1}{2}}P(x,y) \brac{P^T(y) + P^T(x)} -  P(x) \Omega(x,y) P^T(y) - P(y) \Omega(x,y) P^T(x)}. 
\]
\end{theorem}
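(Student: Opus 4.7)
The plan is to adapt the Uhlenbeck--Coulomb gauge construction, in the streamlined formulation Rivi\`ere and Schikorra used to produce \eqref{eq:gaugelocal} in the local case, to the fractional setting. I would first prove the theorem under a smallness assumption $\|\Omega\|_{L^2(\Ep^1_{od}\R)} \le \varepsilon_0$ and then remove it by a covering-and-patching argument. Under the smallness assumption, the strategy is to minimize
\[
\mathcal{E}(Q) := \iint_{\R\times\R} |\Omega^Q(x,y)|^2\, \frac{dx\,dy}{|x-y|}
\]
over $Q \in \dot{H}^{\frac{1}{2}}(\R, SO(N))$. Since $Q$ takes values in the compact group $SO(N)$, expanding the square yields a coercivity bound of the form $[Q]_{W^{\frac{1}{2},2}(\R)}^2 \le C\mathcal{E}(Q) + C\|\Omega\|_{L^2(\Ep^1_{od}\R)}^2$ below the smallness threshold. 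A minimizing sequence $\{Q_k\}$ is bounded in $\dot{H}^{\frac{1}{2}}$ and in $L^\infty$, hence converges, up to a subsequence, weakly in $\dot{H}^{\frac{1}{2}}$ and strongly in $L^q_{loc}$ for all $q<\infty$ (boundary-case Rellich for $W^{\frac{1}{2},2}(\R)$); this strong convergence, combined with the fixed $L^2$-factor $\Omega$, is enough to pass to the limit in the bilinear cross terms $Q_k(x)\Omega(x,y)Q_k^T(y)$, giving lower semicontinuity of $\mathcal{E}$ and hence a minimizer $P$.

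Next, I derive the Euler--Lagrange equation by varying $P_t := P\exp(tA)$ with $A \in C_c^\infty(\R, \R^{N\times N})$ antisymmetric at each point, which keeps $P_t$ in $SO(N)$. Using the symmetric fractional product rule
\[
d_{\frac{1}{2}}(PA)(x,y) = \tfrac{1}{2}\bigl(P(x)+P(y)\bigr) d_{\frac{1}{2}}A(x,y) + \tfrac{1}{2}\, d_{\frac{1}{2}}P(x,y)\bigl(A(x)+A(y)\bigr),
\]
together with the symmetric placement of $P(x)$ and $P(y)$ in the definition of $\Omega^Q$, the asymmetric cross terms generated by differentiating the three summands of $\Omega^{P_t}$ should cancel, leaving
\[
\tfrac{1}{2}\, \left.\frac{d}{dt}\right|_{t=0} \mathcal{E}(P_t) = \iint_{\R\times\R} \Omega^P(x,y) \cdot d_{\frac{1}{2}}A(x,y)\, \frac{dx\,dy}{|x-y|}.
\]
A direct computation shows that $\Omega^P$ is antisymmetric as an $N\times N$ matrix: the $d_{\frac{1}{2}}P\cdot P^T$-contribution is antisymmetric whenever $PP^T=I$ (the diagonal terms $I/|x-y|^{\frac{1}{2}}$ cancel in the symmetric combination), and the matrix antisymmetry of $\Omega$ transfers to the conjugated terms. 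Since $A$ is arbitrary among antisymmetric test matrices, stationarity is then equivalent to $\div_{\frac{1}{2}}\Omega^P_{ij}=0$ for all $i,j$.

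For arbitrary $\Omega \in L^2(\Ep^1_{od}\R)$, I would cover $\R$ by intervals on which the local $L^2$-norm of $\Omega$ lies below $\varepsilon_0$, construct local gauges by the previous step, and glue them into a global $SO(N)$-valued gauge by an Uhlenbeck-type iteration, exploiting the compactness of $SO(N)$. The hardest step is the algebra of the first variation: unlike the classical Leibniz rule $d(PA)=(dP)A+P\,dA$, the fractional product rule admits two inequivalent asymmetric forms depending on whether the non-differentiated factor is evaluated at $x$ or at $y$, and the specific symmetric form of $\Omega^Q$ in the statement is engineered precisely so that the average of these two forms produces the clean conservation law; tracking the cancellation simultaneously through the $d_{\frac{1}{2}}P$-piece and the $\Omega$-conjugated piece of $\Omega^{P_t}$ is the delicate calculation at the heart of the proof. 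A secondary technical point is checking lower semicontinuity of $\mathcal{E}$ against weak $\dot H^{\frac{1}{2}}$-convergence of $Q_k$ in the presence of the fixed $L^2$-factor $\Omega$.
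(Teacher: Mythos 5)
There is a genuine gap, and it sits exactly at the point you yourself flag as "the delicate calculation at the heart of the proof." The functional you minimize, $\mathcal{E}(Q)=\|\Omega^Q\|_{L^2(\Ep^1_{od}\R)}^2$, is not the right one. Write $M(x,y):=\brac{d_{\frac12}Q(x,y)-Q(x)\Omega(x,y)}Q^T(y)$; then $\Omega^Q$ is only the antisymmetric part $\frac12(M-M^T)$ of $M$. Two things go wrong. First, coercivity fails even under your smallness hypothesis on $\Omega$: the antisymmetric part of $d_{\frac12}Q(x,y)Q^T(y)=(U-I)/|x-y|^{1/2}$, $U=Q(x)Q^T(y)\in SO(N)$, is $(U-U^T)/(2|x-y|^{1/2})$, which vanishes whenever $U$ is symmetric (e.g.\ $U=-I$) even though $|U-I|$ is large; so $\mathcal{E}(Q)$ does not control $[Q]_{\dot H^{1/2}}$ and the direct method stalls. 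Second, the claimed identity $\frac12\frac{d}{dt}\big|_{t=0}\mathcal{E}(P_t)=\iint\Omega^P\cdot d_{\frac12}A$ is false: computing $\dot M$ one finds, besides the term carrying $d_{\frac12}$ of the test function, pointwise terms of the form $\Omega^P:(\alpha M)\varphi(x)-\Omega^P:(M\alpha)\varphi(y)$, which reduce to $\operatorname{tr}\brac{\alpha\,[S,\Omega^P]}$ with $S=\frac12(M+M^T)$ and do \emph{not} cancel unless $M$ happens to be antisymmetric. So the Euler--Lagrange equation of your $\mathcal{E}$ is $\div_{\frac12}\Omega^P=(\text{commutator terms})$, not the conservation law.

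The fix (and the paper's route) is to minimize the \emph{full} quantity $\mathcal{F}(Q)=\iint|d_{\frac12}Q(x,y)-Q(x)\Omega(x,y)|^2\,\frac{dx\,dy}{|x-y|}=\|M\|^2$, and to vary by \emph{left} multiplication $P_t=e^{t\alpha\varphi}P$ with a constant $\alpha\in so(N)$ and scalar $\varphi\in C_c^\infty$, not by right multiplication $P\exp(tA)$. Left multiplication factors $\alpha$ out on the left of the entire derivative of $d_{\frac12}Q-Q\Omega$, so the first variation is $2\iint M:\brac{\alpha M\varphi(x)+\alpha\,d_{\frac12}\varphi-M\alpha\varphi(y)}$, and \emph{both} pointwise terms die by the algebraic identities $M:\alpha M=0$ and $M:M\alpha=0$ for antisymmetric $\alpha$; what survives is precisely $\iint\pi_{so}(M):\alpha\,d_{\frac12}\varphi$, i.e.\ $\div_{\frac12}\Omega^P=0$ with $\Omega^P=\pi_{so}(M)$, which is the formula in the statement once $\Omega^T=-\Omega$ is used. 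With your right-multiplied variation the cross terms between $d_{\frac12}P$ and $P\Omega$ do not cancel even for $\mathcal{F}$. Finally, the smallness assumption and the covering-and-patching step are unnecessary: since $|Q(x)\Omega(x,y)|=|\Omega(x,y)|$ pointwise for $Q\in SO(N)$, one has $[Q]_{\dot H^{1/2}}\le\mathcal{F}(Q)^{1/2}+\|\Omega\|_{L^2(\Ep^1_{od}\R)}$ with no smallness, a minimizing sequence is bounded, and lower semicontinuity follows from pointwise a.e.\ convergence and Fatou alone; the gauge-patching you invoke is itself a nontrivial step that you do not justify.
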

This choice of gauge \cite{Uhlenbeck-1982}, or moving frame \cite{Helein91}, is obtained from a minimization argument, cf. \cite{Schikorra-2010}, and is postponed to Section~\ref{s:gaugeconstruction}.

Having this good choice for $P$, we rewrite the equation.
\begin{lemma}\label{la:gettingomegap}
Let $u$ be a solution to \eqref{eq:nlocantisym}. For $P$ as in Theorem~\ref{th:gauge} and any $\varphi \in C_c^\infty(\R)$ we have,
\[
\begin{split}
\div_{\frac{1}{2}} (P_{ik} d_{\frac{1}{2}} u^k) =\Omega_{ij}^P \cdot d_{\frac{1}{2}}u^\ell\ P_{j\ell} + R.
\end{split}
  \]
Here, for some $R_1 \in L^2(\Ep^1_{od}\R)$, some $g \in \dot{H}^{\frac{1}{2}}(\R)$, and for any $\varphi \in C_c^\infty(\R)$,
\[
\begin{split}
\left |\int_{\R} R \varphi \right |&\aleq \int_{\R}\int_{\R} |R_1(x,y)|\ |d_{\frac{1}{4}} g(x,y)| |d_{\frac{1}{4}}u(x,y)|\ |\varphi(x)| \frac{dx\ dy}{|x-y|}.
\end{split}
\]
\end{lemma}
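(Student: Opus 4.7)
\emph{Proof plan.} The plan is to imitate Rivi\`{e}re's local computation \cite{Riviere-2007} leading to \eqref{eq:gaugelocal}, adjusted for the two novelties of the nonlocal setting: the fractional Leibniz rule is only exact up to a symmetric remainder, and the gauged potential $\Omega^P$ is a symmetrization in $(x,y)$ rather than a pointwise matrix. The algebraic key will be the pointwise identity
\[
 |x-y|^{\frac{1}{2}}\, d_{\frac{1}{2}}f(x,y)\, d_{\frac{1}{2}}g(x,y) = d_{\frac{1}{4}}f(x,y)\, d_{\frac{1}{4}}g(x,y),
\]
which converts every commutator remainder into the advertised $|d_{\frac{1}{4}}g|\,|d_{\frac{1}{4}}u|$ form.

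First I would test $\div_{\frac{1}{2}}(P_{ik}d_{\frac{1}{2}}u^k)$ against $\varphi\in C_c^\infty(\R)$ and apply the exact Leibniz identity $P_{ik}(x)\,d_{\frac{1}{2}}\varphi(x,y) = d_{\frac{1}{2}}(P_{ik}\varphi)(x,y) - d_{\frac{1}{2}}P_{ik}(x,y)\,\varphi(y)$, which splits the pairing into two pieces. An integration by parts via the canonical relation between $\div_{\frac{1}{2}}d_{\frac{1}{2}}$ and $\laph$, followed by the equation \eqref{eq:nlocantisym}, turns the first piece into
\[
 \iint P_{ik}(x)\,\varphi(x)\,\Omega_{k\ell}(x,y)\, d_{\frac{1}{2}}u^\ell(x,y)\, \frac{dx\,dy}{|x-y|},
\]
and in the second (``Leibniz defect'') piece a swap $x\leftrightarrow y$, under which both $d_{\frac{1}{2}}u^k$ and $d_{\frac{1}{2}}P_{ik}$ flip sign, lets me freely replace $\varphi(y)$ by $\tfrac{1}{2}(\varphi(x)+\varphi(y))$.

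The matching step will expand $\Omega^P_{ij}(x,y)\,d_{\frac{1}{2}}u^\ell(x,y)\,P_{j\ell}(x)$ (summed over $j,\ell$) from the definition in Theorem~\ref{th:gauge}. Using $P^TP=I$ I collapse the ``diagonal'' products $\sum_j P_{jk}(x)P_{j\ell}(x)=\delta_{k\ell}$ and rewrite the remaining cross terms through the telescopes
\[
 (P^T(y)P(x))_{k\ell} = \delta_{k\ell} - |x-y|^{\frac{1}{2}}\sum_{j}d_{\frac{1}{2}}P_{jk}(x,y)\,P_{j\ell}(x),
\]
\[
 P_{ik}(y) = P_{ik}(x) - |x-y|^{\frac{1}{2}}\, d_{\frac{1}{2}}P_{ik}(x,y).
\]
The $\delta_{k\ell}$- and $P_{ik}(x)$-contributions are intended to reproduce exactly the two terms produced in the previous step, while every remaining contribution carries an extra factor of $|x-y|^{\frac{1}{2}}$; the elementary identity above then converts each such term into a $d_{\frac{1}{4}}$--$d_{\frac{1}{4}}$ pairing. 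Collecting these remainders yields $R$ in the stated form, with $R_1$ a combination of $\Omega$ and matrix entries of $d_{\frac{1}{2}}P$ (both in $L^2(\Ep^1_{od}\R)$) and $g$ a matrix entry of $P$ (in $\dot{H}^{\frac{1}{2}}(\R)$).

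The hard part will be the combinatorial bookkeeping in this matching step: tracking which indices end up contracted with $P(x)$ versus $P(y)$ after symmetrization, and verifying that exactly the diagonal contractions reproduce the two terms from the first step, while each surviving term indeed carries the $|x-y|^{\frac{1}{2}}$ needed by the elementary identity. Beyond that single pointwise identity there is no analytic subtlety; once the indices are sorted out the argument is purely algebraic.
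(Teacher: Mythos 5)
Your proposal is correct and follows essentially the same route as the paper's proof: the exact fractional Leibniz split $P_{ik}(x)\,d_{\frac12}\varphi = d_{\frac12}(P_{ik}\varphi) - d_{\frac12}P_{ik}\,\varphi(y)$, the equation applied to the first piece, the $x\leftrightarrow y$ symmetrization in the defect piece, and the identification of the symmetrized $\Omega^P$ with the ``frozen at $x$'' expression $d_{\frac12}P\,P^T(x) - P(x)\Omega P^T(x)$ up to remainders carrying a factor $|P(x)-P(y)|$, which the identity $|x-y|^{\frac12}\,d_{\frac12}f\,d_{\frac12}g = d_{\frac14}f\,d_{\frac14}g$ converts into the stated $|R_1|\,|d_{\frac14}P|\,|d_{\frac14}u|$ bound with $R_1$ built from $\Omega$ and $d_{\frac12}P$ and $g=P$. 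The only cosmetic difference is that the paper replaces $\varphi(y)$ by $\varphi(x)$ directly rather than by the average $\tfrac12(\varphi(x)+\varphi(y))$; both are justified by the same symmetry.
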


\begin{proof}
We have
\[
\begin{split}
 &\int_{\R}\int_{\R} P_{ik}(x) \brac{u^k(x)-u^k(y)}\ \brac{\varphi(x) - \varphi(y)} \frac{dx\ dy}{|x-y|^{2}}\\
 &=\int_{\R}\int_{\R} \brac{u^k(x)-u^k(y)}\ \brac{P_{ik}(x) \varphi(x) - P_{ik}(y) \varphi(y)} \frac{dx\ dy}{|x-y|^{2}}\\
 &\quad+\int_{\R}\int_{\R} \brac{u^k(x)-u^k(y)}\ \brac{P_{ik}(y)  - P_{ik}(x)}\ \varphi(y) \frac{dx\ dy}{|x-y|^{2}}.
 \end{split}
 \]
By \eqref{eq:nlocantisym} and since $P^T_{\ell j}(x)\ P_{jm}(x) = \delta_{\ell m}$,
\[
\begin{split}
 &\int_{\R}\int_{\R} \brac{u^k(x)-u^k(y)}\ \brac{P_{ik}(x) \varphi(x) - P_{ik}(y) \varphi(y)} \frac{dx\ dy}{|x-y|^{2}}\\
 &=\int_{\R}\int_{\R} \brac{P_{ik}(x)\ \Omega_{k\ell}(x,y)\ P^T_{\ell j}(x)}\ P_{j{m}}(x) d_{\frac{1}{2}}u^{m}(x,y)\ \varphi(x)\frac{dx\ dy}{|x-y|}.
\end{split}
 \]
Also, interchanging $x$ and $y$ and using again $P^T_{\ell j}(x)\ P_{jm}(x) = \delta_{\ell m}$,
\[
\begin{split}
  &\int_{\R}\int_{\R} \brac{u^k(x)-u^k(y)}\ \brac{P_{ik}(y)  - P_{ik}(x)}\ \varphi(y) \frac{dx\ dy}{|x-y|^{2}}\\
  &=\int_{\R}\int_{\R} \brac{u^k(x)-u^k(y)}\ \brac{P_{ik}(y)  - P_{ik}(x)}\ \varphi(x) \frac{dx\ dy}{|x-y|^{2}}\\
  &=-\int_{\R}\int_{\R} d_{\frac{1}{2}}P_{i\ell}(x,y)\ P_{\ell j}^{{T}}(x)\ P_{j{m}}(x)\, d_{\frac{1}{2}}u^{m}(x,y)\ \varphi(x)\frac{dx\ dy}{|x-y|}\\
\end{split}
\]
Thus,
 \[
 \begin{split}
 &\int_{\R}\int_{\R} P_{ik}(x) \brac{u^k(x)-u^k(y)}\ \brac{\varphi(x) - \varphi(y)} \frac{dx\ dy}{|x-y|^{2}}\\
 &={-}\int_{\R}\int_{\R} \Omega_{ij}^P(x,y)\ P_{j\ell}(x)\ d_{\frac{1}{2}}u^\ell(x,y)\ \varphi(x) \frac{dx\ dy}{|x-y|} +R,
\end{split}
 \]
where
\[
 |R| \aleq \int_{\R}\int_{\R} \brac{|\Omega(x,y)| + |d_{\frac{1}{2}} P(x,y)|} |P(x)-P(y)|\ |d_{\frac{1}{2}}u(x,y)|\ |\varphi(x)| \frac{dx\ dy}{|x-y|}.
\]
\end{proof}
From Lemma~\ref{la:gettingomegap} and Theorem~\ref{th:gauge} we see that we have found a div-curl quantity. Now we can apply the localized div-curl estimate, Proposition~\ref{pr:localclms}. With this, one obtains a decay estimate following the typical procedure for critical nonlocal equations, see \cite{DaLio-Riviere-1Dmfd,DaLio-2013,Schikorra-eps}. We only give the main ideas of the proof. For any ball $B(x_0,r) \subset \R$ and any test-function $\varphi \in C_c^\infty(B(x_0,r))$
\[
 \int_{\R} P_{ik} d_{\frac{1}{2}} u^k\ d_{\frac{1}{2}} \varphi = \int_{\R} \Omega_{ij}^P \cdot d_{\frac{1}{2}}u^\ell\ P_{j\ell}\ \varphi + \int_{\R} R \varphi
\]
For any suitably large $\Lambda > 2$,
\[
 \tilde{u} := \eta_{B(x_0,\Lambda r)} (u-(u)_{B(x_0,\Lambda r)}).
\]
for a cutoff-function $\eta \in C_c^\infty(B(0,1))$ and $\eta_{B(x_0,\rho)}(x) := \eta((x-x_0)/\rho)$. Then
\[
\int_{\R} \Omega_{ij}^P \cdot d_{\frac{1}{2}}u^\ell\ P_{j\ell}\ \varphi =\int_{\R} \Omega_{ij}^P \cdot d_{\frac{1}{2}}\tilde{u}^\ell\ P_{j\ell}\ \varphi + \int_{\R} \Omega_{ij}^P \cdot d_{\frac{1}{2}} \brac{u^\ell-\tilde{u}^\ell}\ P_{j\ell}\ \varphi.
\]
By the disjoint support of $\varphi$ and $u-\tilde{u}$, one can show, for some $\sigma >0$ which will change from line to line,
\[
 \int_{\R} \Omega_{ij}^P \cdot d_{\frac{1}{2}} \brac{u^\ell-\tilde{u}^\ell}\ P_{j\ell} \aleq \sum_{k=1}^\infty (2^k \Lambda)^{-\sigma} \|\lapv u\|_{L^{(2,\infty)}(B(x_0,2^k \Lambda r)}.
\]
Since $\Omega^P_{ij}$ is divergence free,
\[
 \int_{\R} \Omega_{ij}^P \cdot d_{\frac{1}{2}}\tilde{u}^\ell\ P_{j\ell}\ \varphi = -\int_{\R} \Omega_{ij}^P \cdot d_{\frac{1}{2}}\brac{P_{j\ell}\ \varphi}\ \tilde{u}^\ell.
\]
Now we apply Proposition~\ref{pr:localclms},
\[
\begin{split}
  \int_{\R} \Omega_{ij}^P &\cdot d_{\frac{1}{2}}\tilde{u}^\ell\ P_{j\ell}\ \varphi\\
  &\aleq \brac{[\tilde{u}]_{BMO} + (\Lambda r)^{-1} \|\tilde{u}\|_{L^1(\R)}}\ \|\Omega^P\|_{L^2(\Ep^1_{od} B(x_0,\Lambda^2 r))}\ \| d_{\frac{1}{2}}\brac{P \varphi}\|_{L^2(\Ep^1_{od} B(x_0,\Lambda^2 r))}.
\end{split}
  \]
By Sobolev embedding, $\|f\|_{BMO} \aleq \|\lapv f\|_{L^{(2,\infty)}(\R)}$, which suitably localized gives
\[
\begin{split}
 [\tilde{u}]_{BMO} + (\Lambda r)^{-1} \|\tilde{u}\|_{L^1}& \aleq [\tilde{u}]_{BMO} \\
 &\aleq \|\lapv u\|_{L^{(2,\infty)}(B(\Lambda^2 r))}\ + \sum_{k=1}^\infty (2^k \Lambda)^{-\sigma} \|\lapv u\|_{L^{(2,\infty)}(B(x_0,2^k \Lambda^2 r)}. 
\end{split}
 \]
Lastly,
\[
 \|d_{\frac{1}{2}}\brac{P \varphi}\|_{L^2(\Ep^1_{od} B(x_0,\Lambda^2 r))} \aleq \brac{\|P\|_{L^\infty(\R)} + \|\lapv P\|_{L^2(\R)}}\, \brac{\|\lapv \varphi\|_{L^2(\R)}\ +  \|\varphi\|_{L^\infty(\R)}}. 
\]
In conclusion,
\[
\begin{split}
\bigg |\int_{\R} P_{ik} &d_{\frac{1}{2}} u^k\ d_{\frac{1}{2}} \varphi \bigg | \\
&\aleq \lambda\,  (\|\varphi\|_{L^\infty(\R)} + \|\lapv \varphi\|_{L^2(\R)})\ 
\brac{[u]_{BMO(B(x_0,\Lambda^2 r))} + \|\lapv u\|_{L^{(2,\infty)}(B(x_0,\Lambda^2 r))}}\\
&\quad+ \sum_{k=1}^\infty (2^k \Lambda)^{-\sigma} \|\lapv u\|_{L^{(2,\infty)}(B(x_0,2^k \Lambda^2 r)},
\end{split}
  \]
where
\[
 \lambda = \|\Omega^P\|_{L^2(\Ep^1_{od} B(x_0,\Lambda^2 r))}
\]
can be chosen arbitrarily small if we restrict our attention to small enough balls, $r \ll 1$.

By a duality argument, see \cite[Lemma 5.18]{Schikorra-eps}, we find some $\varphi \in C_c^\infty(B(x_0,\Lambda r))$, $\|\varphi \|_{\infty} + \|\lapv \varphi\|_{L^{(2,1)}(\R)} \leq 1$ such that 
\[
\begin{split}
 \|\lapv u\|_{L^{(2,\infty)}(B(x_0,r))} &\aleq \left |\int_{\R} P_{ik} \lapv u^k\ \lapv \varphi \right | + \Lambda^{-\sigma} \|\lapv u\|_{L^{(2,\infty)}(B(x_0,\Lambda^2 r))} \\
 & \quad + \sum_{k=1}^\infty (2^k \Lambda)^{-\sigma} \|\lapv u\|_{L^{(2,\infty)}(B(x_0,2^k \Lambda^2 r)}.
 \end{split}
\]
Now, we have the following commutator-like estimate.
\begin{lemma}
For any $\varphi \in C_c^\infty(\R)$
\[
\begin{split}
 \bigg |\int_{\R} P\ d_{\frac{1}{2}} u\cdot d_{\frac{1}{2}} \varphi -& \int_{\R} P\lapv u\ \lapv \varphi \bigg |\\  &\aleq \|\lapv \varphi\|_{L^2(\R)}\ \bigg(\|\lapv P\|_{L^{2}(B(x_0,\Lambda r))} \|\lapv u\|_{L^{(2,\infty)}(B(x_0,\Lambda r))} \\
 &\quad+ \sum_{k=1}^\infty (2^k \Lambda)^{-\sigma} \|\lapv u\|_{L^{(2,\infty)}(B(x_0,2^k \Lambda r))}\bigg).
 \end{split}
\]
\end{lemma}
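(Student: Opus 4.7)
The argument hinges on a single algebraic identity that rewrites the displayed difference as a commutator acting on $\varphi$ plus a symmetric bilinear form in $u$ and $P$; each piece is then estimated by standard tools combined with a dyadic tail decomposition. The identity itself is obtained by combining the Parseval-type formula $\int_{\R}\lapv f\,\lapv g\,dx=\int_{\R}d_{\frac{1}{2}}f\cdot d_{\frac{1}{2}}g\,dx$ with the fractional Leibniz rule $d_{\frac{1}{2}}(P\varphi)(x,y)=P(x)\,d_{\frac{1}{2}}\varphi(x,y)+\varphi(y)\,d_{\frac{1}{2}}P(x,y)$, substituted into the splitting $\int P\,\lapv u\,\lapv\varphi = \int \lapv u\,\lapv(P\varphi)-\int \lapv u\,[\lapv,P]\varphi$; rearranging yields
\[
\int_{\R} P\,d_{\frac{1}{2}}u\cdot d_{\frac{1}{2}}\varphi - \int_{\R} P\,\lapv u\,\lapv\varphi = \int_{\R}\lapv u\,[\lapv,P]\varphi\,dx - \iint d_{\frac{1}{2}}u(x,y)\,d_{\frac{1}{2}}P(x,y)\,\varphi(y)\,\frac{dy\,dx}{|x-y|},
\]
where $[\lapv,P]\varphi:=\lapv(P\varphi)-P\lapv\varphi$ is the standard commutator.

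Next I estimate each right-hand piece globally. For the first term, a Coifman--Rochberg--Weiss/Kato--Ponce-type commutator estimate of the form $\|[\lapv,P]\varphi\|_{L^{(2,1)}(\R)}\aleq \|\lapv P\|_{L^{2}(\R)}\|\lapv\varphi\|_{L^{2}(\R)}$, paired with the H\"older duality $L^{(2,\infty)}\times L^{(2,1)}\to L^1$ applied to $\lapv u$, does the job. For the second term, the antisymmetry of $d_{\frac{1}{2}}u$ and $d_{\frac{1}{2}}P$ under $x\leftrightarrow y$ combined with the symmetry of the kernel $|x-y|^{-1}\,dy\,dx$ lets me replace $\varphi(y)$ by $\varphi(x)$; expressing the resulting carr\'e du champ $(d_{\frac{1}{2}}u\cdot d_{\frac{1}{2}}P)(x)$ pointwise as $u\laph P+P\laph u-\laph(uP)$ reduces its bound to the same three-term fractional Leibniz remainder, again handled by H\"older in Lorentz spaces.

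The final step is the localization: I split $u=\tilde u+(u-\tilde u)$ with $\tilde u=\eta_{B(x_0,\Lambda r)}(u-(u)_{B(x_0,\Lambda r)})$ exactly as in the surrounding argument of the main text, and $P$ analogously. The near--near interactions produce the leading product $\|\lapv P\|_{L^{2}(B(x_0,\Lambda r))}\|\lapv u\|_{L^{(2,\infty)}(B(x_0,\Lambda r))}$, whereas the cross and far--far interactions are handled by a dyadic annular decomposition of the integration variable: because $\varphi$ is localized and the $\lapv$-kernel decays like $|x-y|^{-2}$ away from $\supp\varphi$, each annulus $B(x_0,2^{k+1}\Lambda r)\setminus B(x_0,2^k\Lambda r)$ contributes a factor $(2^k\Lambda)^{-\sigma}$, giving the telescoping series $\sum_{k\geq 1}(2^k\Lambda)^{-\sigma}\|\lapv u\|_{L^{(2,\infty)}(B(x_0,2^k\Lambda r))}$. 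The main technical difficulty lies precisely in this last step, namely reconciling three distinct non-local kernels ($d_{\frac{1}{2}}u$, $d_{\frac{1}{2}}P$ and the $\lapv$-kernel hidden inside the commutator) against a single test function $\varphi$; this is the familiar annular tail analysis of \cite{DaLio-Riviere-1Dmfd,DaLio-2013,Schikorra-eps}, and the present lemma is essentially a gauge-twisted refinement of those estimates.
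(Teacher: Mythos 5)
Your algebraic identity is correct, but the plan to estimate its two pieces \emph{separately} is where the argument breaks down, and this is precisely the difficulty the lemma is hiding. First, the claimed commutator bound $\|[\lapv,P]\varphi\|_{L^{(2,1)}(\R)}\aleq\|\lapv P\|_{L^2}\|\lapv\varphi\|_{L^2}$ is false. Writing it out, $[\lapv,P]\varphi(x)=c\,P.V.\!\int_{\R}\frac{(P(x)-P(y))\,\varphi(y)}{|x-y|^{3/2}}\,dy$, whose leading part is $\varphi\,\lapv P$. That term lies only in $L^2$ (not $L^{(2,1)}$) and can only be bounded using $\|\varphi\|_{L^\infty}$, which in one dimension is not controlled by $\|\lapv\varphi\|_{L^2}$ (take $\varphi$ a logarithmic capacity-type bump with $\varphi(0)=1$ and arbitrarily small $\dot H^{1/2}$-seminorm). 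Second, after symmetrizing in $x\leftrightarrow y$ your carr\'e-du-champ term equals $\int_\R\varphi(x)\,\brac{d_{\frac12}u\cdot d_{\frac12}P}(x)\,dx$, and Cauchy--Schwarz gives at best $\|\varphi\|_{L^\infty}\,\|\lapv u\|_{L^2}\,\|\lapv P\|_{L^2}$: you obtain the strong $L^2$-norm of $\lapv u$, not the weak norm $\|\lapv u\|_{L^{(2,\infty)}}$ that the lemma asserts and that the decay iteration of Section 4 actually requires. The two offending contributions, $\int\varphi\,\lapv u\,\lapv P$ and $\int\varphi\,\brac{d_{\frac12}u\cdot d_{\frac12}P}$, cancel against each other (their difference is a three-term-commutator/Hardy-space quantity), and by estimating the two pieces independently you discard exactly this cancellation; the tools you invoke (Coifman--Rochberg--Weiss, Kato--Ponce) do not recover it.

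The paper avoids the splitting altogether: it substitutes the Riesz-potential representation $\varphi(x)-\varphi(y)=c\int_\R\brac{|x-z|^{-1/2}-|y-z|^{-1/2}}\lapv\varphi(z)\,dz$ into $\int P\,d_{\frac12}u\cdot d_{\frac12}\varphi$, so that the \emph{entire} difference becomes the single trilinear form $\iiint\frac{(P(x)-P(z))(u(x)-u(y))\brac{|x-z|^{-1/2}-|y-z|^{-1/2}}}{|x-y|^2}\,dx\,dy\,\lapv\varphi(z)\,dz$. The simultaneous presence of the factor $P(x)-P(z)$ and of the kernel difference is what yields both the factor $\|\lapv P\|_{L^2}$ and the weak norm of $\lapv u$; the remaining estimate is the content of \cite[Lemma 6.5, Lemma 6.6]{Schikorra-CPDE}. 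If you wish to keep your decomposition, you must treat $\int\lapv u\,[\lapv,P]\varphi-\int\varphi\,\brac{d_{\frac12}u\cdot d_{\frac12}P}$ as one object (essentially the Da Lio--Rivi\`ere three-term commutator estimate), not term by term. Your localization and dyadic-tail discussion is in the right spirit and is not the issue.
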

\begin{proof}
We use the formula\[
        \varphi(z) = c\int_{\R} |x-z|^{\frac{1}{2}} \lapv \varphi(z)\ dz
       \]
and have
 \[
\begin{split}
 &\int_{\R} \int_{\R} \frac{P(x)(u(x)-u(y))(\varphi(x)-\varphi(y))}{|x-y|^2}\ dx\ dy - \int_{\R} P(z)\lapv u(z)\ \lapv \varphi(z)\ dz\\
&= \int_{\R} \int_{\R} \int_{\R} \frac{(P(x)-P(z))(u(x)-u(y))  \brac{|x-z|^{-\frac12}-|y-z|^{-\frac12}}}{|x-y|^2}\ dx\ dy\ \lapv \varphi(z)\ dz.
\end{split}
 \]
Now an analysis similar to that of \cite[Lemma 6.5, Lemma 6.6.]{Schikorra-CPDE} completes the proof. 
\end{proof}

We conclude that, on every ball $B(x_0,r) \subset \R$,
\[
\begin{split}
 \|\lapv u\|_{L^{(2,\infty)}(B(x_0,r))} &\aleq (\lambda + \Lambda^{-\sigma}) \|\lapv u\|_{L^{(2,\infty)}(B(x_0,\Lambda^2 r))} \\
 &\quad + \sum_{k=1}^\infty (2^k \Lambda)^{-\sigma} \|\lapv u\|_{L^{(2,\infty)}(B(x_0,2^k \Lambda^2 r)}.
 \end{split}
\]
This can be seen as a good decay estimate: For any $\eps > 0$ we find a large $\Lambda$ such that for all small enough radii $r \ll 1$ (so that $\lambda$ is small),
\[
\begin{split}
 \|\lapv u\|_{L^{(2,\infty)}(B(x_0,r))} &\leq \eps\, \|\lapv u\|_{L^{(2,\infty)}(B(x_0,\Lambda^2 r))} \\
 &\quad + \eps\, \sum_{k=1}^\infty (2^k \Lambda)^{-\sigma} \|\lapv u\|_{L^{(2,\infty)}(B(x_0,2^k \Lambda^2 r)}.
 \end{split}
\]
Now an iteration argument, see \cite[Lemma A.8]{Blatt-Reiter-Schikorra-2016}, implies that there is a $\theta > 0$ such that
\[
 \|\lapv u\|_{L^{(2,\infty)}(B(x_0,r))} \aleq r^\theta \quad \mbox{for all }r > 0.
\]
H\"older continuity of $u$ then follows from Sobolev embedding on Morrey spaces, see \cite{Adams-1975}. This proves Theorem~\ref{th:nlao}. \qed

\section{Fractional div-curl quantities and \texorpdfstring{$W^{s,p}$}{W(s,p)}-harmonic maps into homogeneous manifolds}\label{s:homo}
For $s \in (0,1)$, $p \in (1,\infty)$ $W^{s,p}$-harmonic maps from $\R^n$ into a manifold $\mathcal{N} \subset \R^N$ are solutions $u \in \dot{W}^{s,p}(\R^n,\mathcal{N})$ to
\[
 \div_s (|d_s u|^{p-2} d_s u^i) \perp T_u \mathcal{N}.
\]
These are exactly the critical points of the energy
\[
 \mathcal{E}_{s,p}(u) := \int_{\R^n}\int_{\R^n} \brac{\frac{|u(x)-u(y)|}{|x-y|^s}}^p\ \frac{dx\ dy}{|x-y|^{n}} \equiv \|d_s u\|_{L^p(\R^n)}^p.
\]
The operator $\div_s (|d_s u|^{p-2} d_s u^i)$ is often referred to as fractional $p$-Laplacian $(-\lap)^s_p$, whose regularity theory has received a lot of attention lately, see, e.g., \cite{DiCastro-Kuusi-Palatucci-2016,Kuusi-Mingione-Sire-2015,Brasco-Lindgren-2017,Schikorra-2016}.

The strategy for half-harmonic maps into spheres from Section~\ref{s:halfharmsphere}, that is $W^{\frac{1}{2},2}$-harmonic maps can be extended to $W^{s,\frac{n}{s}}$-harmonic maps, into round targets. Namely we obtain H\"older regularity for $W^{s,\frac{n}{s}}$-harmonic maps from $n$-dimensional sets into homogeneous spaces. 
The argument now follows the corresponding classical arguments of Strzelecki \cite{Strzelecki-1994} and Toro--Wang \cite{Toro-Wang} for $n$-harmonic maps into spheres and homogeneous manifolds, respectively.

First, we rewrite the equation.
\begin{lemma}[Euler--Lagrange Equations]\label{la:homoEL}
Let $s \in (0,1)$ and $p \in (1,\infty)$. For any $u \in W^{s,p}(\R^n,\mathcal{N})$ which is a $W^{s,p}$-harmonic map into a homogeneous Riemannian manifold $\mathcal{N}$ equivariantly embedded into $\R^N$. Then, 
\begin{equation}\label{eq:homoel}
 \div_s (|d_s u|^{p-2} d_s u^i)= \sum_{\alpha=1}^m |d_s u|^{p-2}\ \Omega_\alpha \cdot d_s Y_\alpha^i(u) + R.
\end{equation}
Here, $\{Y_\alpha\}_{\alpha=1}^m: \mathcal{N} \to \R^N$ is a family of $m$ smooth tangent vector fields on $\n$, $\Omega_\alpha \in L^p(\Ep^1_{od} \R^n)$ and satisfies
\begin{equation}\label{eq:homodiv0}
 \div_s \brac{|d_s u|^{p-2}\Omega_\alpha} = 0 \quad \mbox{in $\R^n$, for $\alpha =1\ldots,m$}.
\end{equation}
The error term $R$ satisfies,
\[
\left |\int_{\R^n} R\ \varphi \right | \aleq  \int_{\R^n}\int_{\R^n} |d_{\frac{s}{p'}}u(x,y)|^p\ |d_{s}\varphi(x,y)|\ \frac{dy\, dx}{|x-y|^n}.
\]
\end{lemma}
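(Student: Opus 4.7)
The proof adapts the Toro--Wang Noether-type argument to the fractional setting, and the crucial advantage coming from the equivariant embedding is that a spanning family of intrinsic Killing vector fields on $\mathcal{N}$ extends to \emph{ambient linear} Killing fields on $\R^N$. Concretely, I choose $\{Y_\alpha\}_{\alpha=1}^m$ as the restrictions to $\mathcal{N}$ of ambient Killing fields of the form $Y_\alpha(p) = B_\alpha p$ with $B_\alpha \in \R^{N\times N}$ antisymmetric, normalized so that $\sum_\alpha Y_\alpha^i(p)\, Y_\alpha^j(p) = \delta^{ij} - \nu^{ij}(p)$ is the tangential projection at every $p \in \mathcal{N}$ (with $\nu$ the normal projection). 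I then set
$$\Omega_\alpha(x,y) := Y_\alpha^k(u(x))\, d_s u^k(x,y) \in L^p(\Ep^1_{od}\R^n).$$

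The key pointwise identity, which holds \emph{exactly} because $Y_\alpha$ is ambient-linear, is
$$d_s u^k(x,y)\, d_s Y_\alpha^k(u)(x,y) \;=\; B_\alpha^{kj}\, d_s u^k(x,y)\, d_s u^j(x,y) \;=\; 0,$$
by antisymmetry of $B_\alpha$ paired against the symmetric tensor $d_s u^k d_s u^j$ in $(k,j)$. Using this, the conservation law \eqref{eq:homodiv0} falls out by testing the Euler--Lagrange equation with the admissible tangent variation $v^k = Y_\alpha^k(u)\psi$, $\psi \in C_c^\infty(\R^n)$: the fractional product rule $d_s(Y_\alpha^k(u)\psi) = Y_\alpha^k(u(x))\, d_s\psi + d_s Y_\alpha^k(u)\,\psi(y)$ converts this into
$$0 \;=\; \iint |d_s u|^{p-2}\,\Omega_\alpha\, d_s\psi\,\frac{dx\,dy}{|x-y|^n} \;+\; \iint |d_s u|^{p-2}\,(d_s u^k\, d_s Y_\alpha^k(u))\,\psi(y)\,\frac{dx\,dy}{|x-y|^n},$$
where the second integral vanishes identically by the pointwise identity above.

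To derive the equation \eqref{eq:homoel} itself, I use the tangential-projection identity to decompose
$$d_s u^i(x,y) \;=\; \sum_{\alpha=1}^m Y_\alpha^i(u(x))\, \Omega_\alpha(x,y) + E^i(x,y),$$
where $E^i(x,y) := \nu^{ij}(u(x))\, d_s u^j(x,y)$ is the normal part of the difference quotient at $u(x)$. Since $u(x),u(y) \in \mathcal{N}$ and the second fundamental form of $\mathcal{N}\hookrightarrow\R^N$ is bounded, we have $|E^i(x,y)| \aleq |u(x)-u(y)|^2/|x-y|^s$. Substituting into the distributional pairing $\langle \div_s(|d_s u|^{p-2} d_s u^i),\varphi\rangle = \iint |d_s u|^{p-2} d_s u^i d_s\varphi \frac{dx\,dy}{|x-y|^n}$, applying the product rule $Y_\alpha^i(u(x))\,d_s\varphi = d_s(Y_\alpha^i(u)\varphi) - d_s Y_\alpha^i(u)\,\varphi(y)$, and invoking \eqref{eq:homodiv0} to kill the $d_s(Y_\alpha^i(u)\varphi)$ piece, one is left with the fractional div-curl term $\sum_\alpha \iint |d_s u|^{p-2}\Omega_\alpha\, d_s Y_\alpha^i(u)\,\varphi(x)\, \frac{dx\,dy}{|x-y|^n}$ (the switch from $\varphi(y)$ to $\varphi(x)$ is again exact via the pointwise identity $d_s u^k d_s Y_\alpha^k(u) \equiv 0$ after an $x\leftrightarrow y$ symmetrization), together with the error $R[\varphi] := \iint |d_s u|^{p-2}\, E^i\, d_s\varphi\, \frac{dx\,dy}{|x-y|^n}$.

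The remainder estimate then follows from the pointwise bound
$$|d_s u|^{p-2}(x,y)\,|E^i(x,y)| \aleq \frac{|u(x)-u(y)|^p}{|x-y|^{s(p-1)}} = |d_{s/p'} u(x,y)|^p,$$
which gives $|R[\varphi]| \aleq \iint |d_{s/p'} u|^p\,|d_s\varphi|\, \frac{dx\,dy}{|x-y|^n}$, as required. The main technical obstacle is securing the conservation law \eqref{eq:homodiv0} \emph{exactly} rather than only up to quadratic error: a merely intrinsic Killing field would, via Taylor expansion, yield only $d_s u^k d_s Y_\alpha^k(u) = O(|u(x)-u(y)|^3/|x-y|^{2s})$ and a correspondingly approximate conservation law whose leftover error would not close into an acceptable $R$-term. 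The ambient-linear structure, available precisely because of the equivariant embedding, removes this obstruction by making the relevant contraction vanish pointwise.
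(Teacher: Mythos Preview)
Your argument contains one genuine structural gap. You claim the ambient-linear Killing fields $Y_\alpha(p)=B_\alpha p$ can be normalized so that $\sum_\alpha Y_\alpha^i(p)\,Y_\alpha^j(p)=\Pi_{ij}(p)$ for every $p\in\mathcal{N}$. This is what makes your decomposition $d_s u^i=\sum_\alpha Y_\alpha^i(u(x))\,\Omega_\alpha+E^i$ produce a \emph{purely normal} remainder $E^i$, which is where your error bound $|E^i|\aleq |u(x)-u(y)|^2/|x-y|^s$ comes from. But this frame identity is not available in general: with an $\operatorname{Ad}$-invariant basis of $\mathfrak{g}$ the operator $\Gamma(p):=\sum_\alpha (B_\alpha p)\otimes(B_\alpha p)$ is $G$-equivariant and has range $T_p\mathcal{N}$, yet it equals $\Pi(p)$ only when all its nonzero eigenvalues coincide --- essentially only in the isotropy-irreducible case. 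For a general homogeneous target (say a product of spheres of different radii, or a flag manifold) no single rescaling of the $B_\alpha$ will force $\Gamma=\Pi$, and then your $E^i$ retains a tangential piece of size $|u(x)-u(y)|/|x-y|^s$, which does \emph{not} fit into the claimed $R$-bound.

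The paper (following H\'elein and Toro--Wang) handles this by using \emph{two} families: Killing fields $X_\alpha$ (these are the ambient-linear ones, satisfying $\langle X_\alpha(p)-X_\alpha(q),p-q\rangle=0$ and hence your pointwise identity $d_s u^k\,d_s X_\alpha^k(u)\equiv 0$) together with dual tangent fields $Y_\alpha$ chosen so that $\Pi_{ij}(p)=\sum_\alpha X_\alpha^i(p)\,Y_\alpha^j(p)$. One then takes $\Omega_\alpha(x,y)=\tfrac12\bigl(X_\alpha^k(u(x))+X_\alpha^k(u(y))\bigr)\,d_s u^k(x,y)$, which --- by the same Killing identity you used --- coincides with your asymmetric version $X_\alpha^k(u(x))\,d_s u^k$. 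With this two-family setup your conservation-law argument (test with $X_\alpha(u)\psi$) and your product-rule/symmetrization derivation of the equation go through verbatim; only the unjustified single-family frame identity needs to be replaced by the correct mixed identity $\Pi_{ij}=\sum_\alpha X_\alpha^i Y_\alpha^j$.
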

The proof is a direct fractional analogue of the arguments in \cite{Helein91-sym, Strzelecki-1994, Toro-Wang}, we postpone it to Section~\ref{s:homoEL}.

From Lemma~\ref{la:homoEL} and Theorem~\ref{th:clmstypestimate} we obtain the following regularity theorem. This generalizes the second-named author's regularity result for $W^{s,\frac{n}{s}}$-harmonic maps into spheres \cite{Schikorra-CPDE} to homogeneous target manifolds. Let us stress that even for round spheres our argument is much simpler.
\begin{theorem}\label{th:liegroupreg}
Let $s \in (0,1)$, $p = \frac{n}{s} \in [2,\infty)$ and $u \in W^{s,p}(\R^n,\mathcal{N})$ be a $W^{s,p}$-harmonic map, where $\mathcal{N}$ is a homogeneous Riemannian manifold equivariantly embedded into $\R^N$. Then $u$ is H\"older continuous.
\end{theorem}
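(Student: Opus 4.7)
The plan is to prove H\"older continuity of $u$ by establishing the Morrey-type decay
$$\|d_s u\|_{L^p(\Ep^1_{od} B(x_0, r))} \leq C\, r^{\theta}, \qquad \theta>0,$$
for all sufficiently small balls $B(x_0, r)\subset \R^n$. At the critical scaling $sp=n$ such decay yields H\"older continuity of $u$ via the Sobolev--Morrey embedding \cite{Adams-1975}. The architecture follows the classical Strzelecki--Toro--Wang strategy for $n$-harmonic maps \cite{Strzelecki-1994,Toro-Wang}, but with the Coifman--Lions--Meyer--Semmes estimate replaced by its fractional analogue, Theorem~\ref{th:clmstypestimate}, and its localized counterpart, Proposition~\ref{pr:localclms}. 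Since Lemma~\ref{la:homoEL} already exhibits the fractional div-curl structure of the Euler--Lagrange equation \eqref{eq:homoel}, no gauge construction is required.

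Fix a ball $B(x_0, r)$ of small radius, let $\eta\in C_c^\infty(B(x_0, 2r))$ be a standard cutoff with $\eta\equiv 1$ on $B(x_0, r)$, and test \eqref{eq:homoel} componentwise against $\eta^p\brac{u^i-(u^i)_{B(x_0,2r)}}$. The product-rule expansion of the left-hand side dominates $\int_{B(x_0,r)}\|d_s u\|_p^p\, dx$, modulo a non-local correction that will be absorbed. For each $\alpha$ in the first term of the right-hand side, $F_\alpha:=|d_s u|^{p-2}\Omega_\alpha$ lies in $L^{p'}(\Ep^1_{od}\R^n)$ with $\|F_\alpha\|_{L^{p'}}\aleq\|d_s u\|_{L^p}^{p-1}$ and is $s$-divergence free by \eqref{eq:homodiv0}, while $d_s Y_\alpha(u)\in L^p(\Ep^1_{od}\R^n)$ since $Y_\alpha$ is smooth. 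Proposition~\ref{pr:localclms} then controls each summand by the $BMO$-norm of $\eta^p\brac{u-(u)_{B(x_0,2r)}}$ times a localized $L^{p'}$-$L^p$ product, and the critical Sobolev embedding $W^{s,n/s}(\R^n)\hookrightarrow BMO(\R^n)$ bounds the $BMO$-norm by $\|d_s u\|_{L^p(\Ep^1_{od} B(x_0, \Lambda r))}$. The error term $R$ is handled by the estimate in Lemma~\ref{la:homoEL} together with H\"older and a fractional Sobolev embedding relating $d_{s/p'}u$ to $d_s u$.

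Collecting, one obtains a hole-filling inequality of the form
\begin{equation*}
\|d_s u\|^p_{L^p(\Ep^1_{od} B(x_0,r))} \leq C\, \eps(r)\, \|d_s u\|^p_{L^p(\Ep^1_{od} B(x_0,\Lambda^2 r))} + C\sum_{k=1}^\infty (2^k\Lambda)^{-\sigma}\|d_s u\|^p_{L^p(\Ep^1_{od} B(x_0,2^k\Lambda^2 r))},
\end{equation*}
where $\eps(r):=\|d_s u\|_{L^p(\Ep^1_{od} B(x_0,\Lambda r))}$ tends to zero as $r\to 0$ by absolute continuity of the integral, and the dyadic tail decays geometrically by virtue of the non-local kernel $|x-y|^{-n-s}$. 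Choosing $\Lambda$ large and then $r$ small makes the right-hand side a strict contraction of the left-hand side; the iteration lemma \cite[Lemma A.8]{Blatt-Reiter-Schikorra-2016} then converts this into the desired Morrey decay with some $\theta>0$.

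The main obstacle is twofold: (i) arranging the test function so that the left-hand side really dominates $\|d_s u\|^p_{L^p(\Ep^1_{od} B(x_0,r))}$, which requires a fractional Caccioppoli--Poincar\'e step and careful control of the commutator $d_s(\eta^p \psi) - \eta^p d_s\psi$; and (ii) absorbing the error term $R$, whose intermediate exponent $s/p'$ must be matched with a fractional Sobolev embedding whose constants are scaling-invariant under dilation of $B(x_0, r)$, so that $R$ is absorbed into the geometric iteration rather than producing an additive remainder that destroys the decay.
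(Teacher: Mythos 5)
Your proposal is correct and follows essentially the same route as the paper: test \eqref{eq:homoel} with a cutoff times $u$ minus its mean, control the div-curl term via Lemma~\ref{la:homoEL}, Proposition~\ref{pr:localclms} and the critical embedding $W^{s,n/s}\hookrightarrow BMO$, gain smallness by absolute continuity, and iterate to a Morrey decay. One caveat: your displayed inequality omits the hole-filling term $C_\eps\brac{[u]^{p}_{W^{s,p}(B(x_0,\Lambda r))} - [u]^{p}_{W^{s,p}(B(x_0,r))}}$ produced by the cutoff expansion of the left-hand side --- the coefficient of the large-ball term is \emph{not} genuinely $o(1)$ as $r\to 0$ until this term is absorbed into the left-hand side (the Widman hole-filling step the paper performs to reach $\tau<1$), so the contraction cannot be obtained from smallness of $\eps(r)$ alone.
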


\begin{proof}[Sketch of the proof]
Let $B(x_0,r) \subset \R^n$ be a ball centered in $x_0$ and $r > 0$.  For a typical cutoff function $\eta \in C_c^\infty(B(0,2))$, $\eta \equiv 1$ on $B(0,1)$, let $\eta_{B(x_0,r)}(x) = \eta((x-x_0)/r)$. Define the test-function $\varphi$ by
\[
 \varphi := \eta_{B(x_0,r)} (u-(u)_{B(x_0,r)}),
\]
where $(u)_{B(x_0,r)}$ denotes the mean value of $u$ on $B(x_0,r)$.
Observe that for any $\Lambda > 4$, we have the following estimates
\[[\vp]_{W^{s,p}(\R^n)}\aleq \|d_s u\|_{L^{p}(\Ep^1_{od}B(x_0, \Lambda r))}\] 
and since $p=\frac{n}{s}$, 
\[
[\varphi]_{BMO} + r^{-n} \|\varphi\|_{L^1(\R^n)} \aleq \|d_s u\|_{L^{p}(\Ep^1_{od}B(x_0, \Lambda r))}.
\]
We also denote by
\[
 [u]^{p}_{W^{s,p}(B(x_0,r))} := \int_{B(x_0,r)}\ \int_{B(x_0,r)} \frac{|u(x)-u(y)|^{p}}{|x-y|^{n+sp}}\ dx\ dy.
\]
By the usual cutoff-arguments, see for example \cite[Lemma 4.1]{Schikorra-CPDE} we find for any $\eps > 0$ a constant $C_\eps > 0$ such that
\[
 \begin{split}
 [u]^{p}_{W^{s,p}(B(x_0,r))} &\leq  \int_{\R^n} |d_s u|^{p-2} d_s u \cdot d_s \varphi(y)\ dy\\
&\quad +\eps\ [u]^{p}_{W^{s,p}(B(x_0,\Lambda r))} + C_\eps \brac{[u]^{p}_{W^{s,p}(B(x_0,\Lambda r))} - [u]^{p}_{W^{s,p}(B(x_0,r))}}\\
  &\quad +\sum_{k=1}^\infty (2^{k} \Lambda)^{-\sigma} [u]_{W^{s,p}(B(x_0,2^k \Lambda^2 r))}^p.
\end{split}
 \]
Here $\sigma > 0$ is a constant that may vary from line to line.

For the first term, we use the equation \eqref{eq:homoel}
\[
\int_{\R^n} |d_s u|^{p-2} d_s u \cdot d_s \varphi = \int_{\R^n} |d_s u|^{p-2}\ \Omega_\alpha \cdot d_s Y_\alpha^i(u)\ \varphi + \int_{\R^n} R\varphi.
\]
For the error term $R$ the usual cutoff arguments and Sobolev inequality lead to
\[
 \int_{\R^n} R\varphi \aleq \|d_s u\|_{L^p(\Ep^1_{od} B(x_0,\Lambda r))}^{p}\  \|d_s\varphi\|_{L^p(\Ep^1_{od} B(x_0,\Lambda r))}.
\]
Moreover, we use the div-curl structure induced by \eqref{eq:homodiv0} and Proposition~\ref{pr:localclms}, to obtain
\[\begin{split}
  \int_{\R^n} &|d_s u|^{p-2} \Omega_\alpha \cdot d_s Y_\alpha^i(u)\ \varphi\\
  &\aleq \||d_s u|^{p-2} \Omega\|_{L^{p'}(\Ep^1_{od} B(x_0,\Lambda r))}\ \|d_s u\|_{L^{p}(\Ep^1_{od}B(x_0,\Lambda r))}\ \brac{[\varphi]_{BMO} + r^{-n} \|\varphi\|_{L^1(\R^n)}}.
  \end{split}
\]
Thus, with the estimate for $\varphi$,
\[
 \int_{\R^n} |d_s u|^{p-2} d_s u \cdot d_s \varphi \aleq \brac{\|\Omega\|_{L^p(\Ep^1_{od} B(\Lambda r))}+ \|d_s u\|_{L^p(\Ep^1_{od} B(\Lambda r))}}\ \|d_s u\|_{L^p(\Ep^1_{od} B(\Lambda r))}^{p}.
\]
For any $\eps > 0$, by absolute continuity of integrals, we find $R > 0$ small enough so that for any $r \in (0,\Lambda^{-1} R)$ we have
\[
 \brac{\|\Omega\|_{L^p(\Ep^1_{od} B(x_0, \Lambda r))}+ \|d_s u\|_{L^p(\Ep^1_{od} B(x_0, \Lambda r))}} < \eps.
\]
Also,
\[
 \|d_s u\|_{L^p(\Ep^1_{od} B(x_0, \Lambda r))}^{p} \aleq [u]_{W^{s,p}(B(x_0, \Lambda^2 r))}^p + \sum_{k=1}^\infty \brac{2^{k} \Lambda}^{-\sigma} [u]_{W^{s,p}(B(x_0, 2^k\Lambda^2 r))}^p.
\]
Consequently we showed 
\[
 \begin{split}
 [u]^{p}_{W^{s,p}(B(x_0,r))} &\leq  2\eps\ [u]^{p}_{W^{s,p}(B(x_0, \Lambda^2 r))} + C_\eps \brac{[u]^{p}_{W^{s,p}(B(x_0, \Lambda r))} - [u]^{p}_{W^{s,p}(B( x_0,r))}}\\
  &\quad+C\ \sum_{k=1}^\infty \brac{2^{k} \Lambda}^{-\sigma} [u]_{W^{s,p}(B(x_0, 2^k\Lambda^2 r))}^p.
\end{split}
 \]
 
Absorbing $ C_\eps [u]^{p}_{W^{s,p}(B( r,x_0))}$ to the left-hand side, choosing $\Lambda$ large enough for $\tau = \frac{C_\eps + 2\eps}{C_\eps + 1} < 1$ we find
\[
 [u]^{p}_{W^{s,p}(B(x_0,r))} \leq  \tau\ [u]^{p}_{W^{s,p}(B(x_0, \Lambda^2 r))} +\sum_{k=1}^\infty \brac{2^{k} \Lambda}^{-\sigma}  [u]_{W^{s,p}(B(x_0,2^k \Lambda^2 r))}^p.
 \]
This holds for any $r \in (0,\Lambda^{-1} R)$. With an iteration argument, for details see, e.g., \cite[Lemma A.8]{Blatt-Reiter-Schikorra-2016}, one obtains $\theta > 0$ such that for any $r > 0$
\[
 [u]_{W^{s,p}(B(x_0,r))}  \aleq r^{\theta} [u]_{W^{s,p}(\R^n)}.
\]
Now since $p=\frac{n}{s}$ by Sobolev inequality on Morrey spaces, see \cite{Adams-1975}, $u$ is H\"older continuous.
\end{proof}
\section{Fractional div-curl estimates: Proof of Theorem~\ref{th:clmstypestimate} and Proposition~\ref{pr:localclms}}\label{s:hardyspaceest}
Fix a smooth nonnegative bump function $\kappa \in C_c^\infty(B(0,1))$ such that $\int_{\R^n} \kappa = 1$. Denote by $\kappa_t(x) := t^{-n} \kappa(x/t)$. The Hardy space $\mathcal{H}^1(\R^n)$ is the space of all functions $f \in L^1(\R^n)$  such that
\begin{equation}\label{eq:hardyspacedef}
 \|f\|_{\mathcal{H}^1(\R^n)} := \|\sup_{t > 0} |\kappa_t \ast f| \|_{L^1(\R^n)}<\infty.
\end{equation}
The space $BMO$ is given through the seminorm
\begin{equation}\label{eq:defbmo}
 [f]_{BMO} := \sup_{t > 0,\ x \in \R^n} t^{-n} \int_{B(x,t)} |f-(f)_{B(x,t)}|.
\end{equation}
Denote by $\mathcal{M}$ the Hardy--Littlewood maximal function 
\[
 \mathcal{M}f(x) := \sup_{t > 0} \mvint_{B(x,t)} |f|.
\]
It is well known (see, e.g., \cite{Bojarski-Hajlasz-1993, Hajlasz-1996}), that for any $p \in [1,\infty)$,
\[
 \sup_{t > 0} t^{-1} |f(x) - (f)_{B(x,t)}| \aleq \brac{\mathcal{M} |df|^p}(x)^{\frac{1}{p}}.
\]

The following is a nonlocal version of this fact.

\begin{lemma}\label{la:maximalest}
Let $s \in (0,1)$, then for any $p \in [1,\infty)$, $t > 0$,
\[
 t^{-s} |f(x) - (f)_{B(x,t)}| \aleq \brac{\int_{B(x,t)} \frac{|f(x)-f(y)|^p}{|x-y|^{n+sp}}\ dy }^{\frac{1}{p}}.
\]
\end{lemma}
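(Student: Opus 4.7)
The plan is to reduce the inequality to a straightforward computation by combining the standard Jensen-type bound on oscillations with H\"older's inequality applied to an appropriate splitting of the integrand. First I would write
\[
 |f(x) - (f)_{B(x,t)}| \leq \frac{1}{|B(x,t)|} \int_{B(x,t)} |f(x)-f(y)|\, dy,
\]
which reduces everything to controlling the $L^1$-oscillation integral on the right.

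Next, for $p \in (1,\infty)$, I would insert the weight $|x-y|^{n/p+s}$ and apply H\"older's inequality with conjugate exponents $p$ and $p'$:
\[
 \int_{B(x,t)} |f(x)-f(y)|\, dy = \int_{B(x,t)} \frac{|f(x)-f(y)|}{|x-y|^{n/p+s}}\; |x-y|^{n/p+s}\, dy,
\]
leading to
\[
 \int_{B(x,t)} |f(x)-f(y)|\, dy \leq \brac{\int_{B(x,t)} \frac{|f(x)-f(y)|^p}{|x-y|^{n+sp}}\, dy}^{1/p} \brac{\int_{B(x,t)} |x-y|^{(n/p+s)p'}\, dy}^{1/p'}.
\]
The second factor is then evaluated by polar coordinates: with $\alpha = (n/p+s)p'$, one has $\alpha + n = p(n+s)/(p-1)$, so $(\alpha+n)/p' = n+s$ and the integral equals $c\, t^{n+s}$. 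Dividing by $|B(x,t)| \simeq t^n$ yields exactly the factor $t^s$ required on the left.

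For the boundary case $p = 1$, H\"older is not needed: since $|x-y| \leq t$ on $B(x,t)$, the elementary bound $t^{-(n+s)} \leq |x-y|^{-(n+s)}$ gives
\[
 \frac{1}{|B(x,t)|} \int_{B(x,t)} |f(x)-f(y)|\, dy \aleq t^{s} \int_{B(x,t)} \frac{|f(x)-f(y)|}{|x-y|^{n+s}}\, dy,
\]
which is the claim. I do not expect a serious obstacle here; the only point requiring minor care is verifying that $(n/p+s)p' + n > 0$ so that the polar integral converges, but this is immediate from $s \in (0,1)$ and $p \geq 1$. Thus the proof is essentially a weighted H\"older argument combined with a scaling computation.
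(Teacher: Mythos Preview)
Your argument is correct and follows essentially the same route as the paper: bound $|f(x)-(f)_{B(x,t)}|$ by the averaged oscillation $t^{-n}\int_{B(x,t)}|f(x)-f(y)|\,dy$, then upgrade to the weighted $L^p$-integral by an elementary inequality plus the scaling $|x-y|\leq t$. The only cosmetic difference is that the paper uses Jensen's inequality (which handles all $p\geq 1$ in one line) in place of your H\"older-with-weight step, avoiding the separate treatment of $p=1$.
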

\begin{proof}
This can be checked by a direct computation: For any $t > 0$,
\begin{equation}\label{eq:maximalest}
 t^{-s} |f(x) - (f)_{B(x,t)}|  \aleq\ t^{-n-s} \int_{B(x,t)} |f(x)-f(y)|\ dy.
 \end{equation}
 By Jensen's inequality for $p \geq 1$, then \eqref{eq:maximalest} can be further estimated by
 \[ 
 \begin{split}
  t^{-s} |f(x) - (f)_{B(x,t)}| &\aleq \brac{t^{-n-sp} \int_{B(x,t)} |f(x)-f(y)|^p \ dy}^{\frac{1}{p}}\\
  &= \brac{ \int_{B(x,t)} \frac{|f(x)-f(y)|^p}{|x-y|^{n+sp}} \brac{\frac{|x-y|}{t}}^{n+sp} dy}^{\frac{1}{p}}.
\end{split}
\]
Now the claim follows, since $|x-y| \leq 2t$ for any $y \in B(x,t)$.
\end{proof}

Now we give the proof of Theorem~\ref{th:clmstypestimate}, for which we adapt the argument of Coifman--Lions--Meyer--Semmes \cite[Lemma II.1]{CLMS-1993}.
\begin{proof}[Proof of Theorem~\ref{th:clmstypestimate}]
Set 
\[
 \Gamma(t,x) :=  |\kappa_t \ast (F \cdot d_s g)(x)|.
\]
We will show
\begin{equation}\label{eq:clmshardyestclaim}
\| \sup_{t > 0} \Gamma(t,\cdot)\|_{L^1(\R^n)} \aleq  \|F\|_{L^p(\Ep^1_{od} \R^n)}\ \|d_s g\|_{L^{p'}(\Ep^1_{od} \R^n)},
\end{equation}
which in view of \eqref{eq:hardyspacedef} implies the claim of Theorem~\ref{th:clmstypestimate}. 

For $t > 0$ and $x \in \R^n$, since $\div_s F = 0$ we have
\[
\begin{split}
 \Gamma(t,x)&= \kappa_t \ast (F \cdot d_s g)(x)\\
&=\int_{\R^n}\int_{\R^n} \kappa_t(x-y) F(z,y)\ (g(z)-g(y))\ \frac{dz\ dy}{|z-y|^{n+s}}\\
&=
\int_{\R^n}\int_{\R^n} F(z,y)\ \brac{\kappa_t(x-y)-\kappa_t(x-z)}\ (g(z)-(g)_{B(x,2t)})\ \frac{dz\ dy}{|z-y|^{n+s}}.
\end{split}
 \]
By the Lipschitz continuity of $\kappa$ we find,
\begin{equation}\label{eq:splitclms}
\begin{split}
 |\Gamma(t,x)|  &\aleq t^{-n-1} \int_{B(x,2t)}\int_{B(x,2t)} |g(z)-(g)_{B(x,2t)}|\,  |F(z,y)|\ \frac{dz\ dy}{|z-y|^{n+s-1}}\\
&\quad+t^{-n} \int_{B(x,t)}\int_{\R^n \backslash B(x,2t)} |g(z)-(g)_{B(x,2t)}|\,  |F(z,y)|\ \frac{dz\ dy}{|z-y|^{n+s}}\\
 &\quad+t^{-n} \int_{\R^n \backslash B(x,2t)} \int_{B(x,t)}|g(z)-(g)_{B(x,2t)}|\,  |F(z,y)|\ \frac{dz\ dy}{|z-y|^{n+s}}\\
&=: I(t,x) + II(t,x) +III(t,x).
 \end{split}
\end{equation}
As for $I(t,x)$, by H\"{o}lder inequality we obtain,
\[
\begin{split}
 I&(t,x) \\
 &\leq  t^{-n-1} \int_{B(x,2t)}\brac{\int_{B(x,2t)}\frac{|g(z)-(g)_{B(x,2t)}|^{p'}}{|z-y|^{n+p'(s-1)}}\ dy}^{\frac{1}{p'}} \brac{\int_{B(x,2t)} |F(z,y)|^{p}\frac{dy}{|z-y|^n}}^{\frac{1}{p}} \ dz.
\end{split}
 \]
Since $s < 1$, we can integrate the first term in $y$ and have
\begin{equation}\label{ineq:I}
 I(t,x)\aleq t^{-n-s} \int_{B(x,2t)}|g(z)-(g)_{B(x,2t)}| \brac{\int_{B(x,2t)} |F(z,y)|^{p}\frac{dy}{|z-y|^n} }^{\frac{1}{p}}\ dz.
\end{equation}
By H\"older inequality for some $q>1$, which will be specified below,
\begin{align}
 &\aleq t^{-n-s} \brac{\int_{B(x,2t)}|g(z)-(g)_{B(x,2t)}|^q \ dz}^{\frac{1}{q}} \brac{\int_{B(x,2t)} \brac{\int_{\R^n} |F(z,y)|^{p}\frac{dy}{|z-y|^n} }^{\frac{q'}{p}}\ dz }^{\frac{1}{q'}}\nonumber\\
 &\aleq t^{-n (\frac{n+sq}{nq})} \brac{\int_{B(x,2t)}|g(z)-(g)_{B(x,2t)}|^q\ dz}^{\frac{1}{q}} \brac{\mathcal{M} \|F\|_{p}^{q'}(x)}^{\frac{1}{q'}}\nonumber,
 \end{align}
where we recall our notation
\[
 \|F\|_{p}(y) := \brac{\int_{\R^n} |F(z,y)|^{p}\ \frac{dz}{|z-y|^n}}^{\frac{1}{p}}. 
\]
Whenever $q$ is so that
\begin{equation}\label{eq:qconds}
 q>p'\quad \mbox{and}\quad q>\frac{n}{n-s}
\end{equation}
we can apply Lemma \ref{la:locsobolev}, for $q_2 := \frac{nq}{n+sq} > 1$ and have
\[
 \brac{\int_{B(x,2t)} |g(z)-(g)_{B(x,2t)}|^{q}\,dz}^{\frac{1}{q}} \aleq \brac{\int_{B(x,\Lambda t)} \brac{\int_{\R^n} \frac{|g(z)-g(y)|^{p'}}{|z-y|^{n+p' s}} dz }^{\frac{q_2}{p'}}dy}^{\frac{1}{q_2}},
\]
for some constant $\Lambda > 1$.

Recall again, that
\[
 \|d_s g\|_{p'}(y) := \brac{\int_{\R^n} |d_s g(y,z)|^{p'}\ \frac{dz}{|z-y|^{n}} }^{\frac{1}{p'}},
\]
we obtain
\[
 I(t,x) \aleq\brac{\mathcal{M} \|d_s g\|_{p'}^{q_2}(x)}^{\frac{1}{q_2}} \brac{\mathcal{M} \|F\|_{p}^{q'}(x)}^{\frac{1}{q'}}.
\]
Thus, from H\"{o}lder inequality we get
\begin{equation}\label{eq:I}
 \begin{split}
  \int_{\R^n}\sup_{t>0} I(t,x) \,dx &\aleq \int_{\R^n} \brac{\mathcal{M} \|d_s g\|_{p'}^{q_2}(x)}^{\frac{1}{q_2}} \brac{\mathcal{M} \|F\|_{p}^{q'}(x)}^{\frac{1}{q'}}\,dx\\
  &\le \brac{\int_{\R^n} \brac{\mathcal{M} \|d_s g\|_{p'}^{q_2}(x)}^{\frac{p'}{q_2}}\ dx}^\frac{1}{p'} \brac{\int_{\R^n} \brac{\mathcal{M} \|F\|_{p}^{q'}(x)}^{\frac{p}{q'}}\ dx}^\frac{1}{p}\\
  &\le \big \|\mathcal{M}\|d_s g\|_{p'}^{q_2}\big \|_{L^{p'/q_2}(\R^n)}^{1/q_2}\ \big \|\mathcal{M}\|F\|_{p}^{q'}\big \|_{L^{p/q'}(\R^n)}^{1/q'}.
 \end{split}
\end{equation}
Now to apply the Maximal Theorem, see \cite[Theorem 1(c), p.13]{Stein-harmonic}, we choose $p/q'>1$ and $p'/q_2>1$. The latter can be always achieved for a $q$ satisfying \eqref{eq:qconds} by taking $q'$ close enough to $p'$. Therefore,
\[
\begin{split}
 \int_{\R^n}\sup_{t>0} I(t,x) \,dx &\aleq \big \|\|d_s g\|_{p'}^{q_2}\big \|_{L^{p'/q_2}(\R^n)}^{1/q_2}\ \big\|\ \|F\|_{p}^{q'} \big\|_{L^{p/q'}(\R^n)}^{1/q'}\\
 &= \|d_s g\|_{L^{p'}(\Ep^1_{od} \R^n)}\ \|F\|_{L^p(\Ep^1_{od} \R^n)}.
 \end{split}
\]
As for $II(t,x)$, applying twice H\"older inequality and using the definition of the maximal function, we obtain
\[
\begin{split}
 II&(t,x) \\&\aleq  t^{-n} \int_{B(x,t)} \brac{\int_{\R^n \backslash B(x,2t)} \frac{|g(z)-(g)_{B(x,t)}|^{p'}}{|z-y|^{n+sp'}}\, dz}^{\frac{1}{p'}}\  \brac{\int_{\R^n \backslash B(x,2t)} \frac{|F(z,y)|^p}{|z-y|^{n}}\ dz}^{\frac{1}{p}} dy\\
 &\aleq  t^{-n} \sup_{y \in B(x,t)}\brac{\int_{\R^n \backslash B(x,2t)} \frac{|g(z)-(g)_{B(x,t)}|^{p'}}{|z-y|^{n+sp'}} dz}^{\frac{1}{p'}}\  \int_{B(x,t)} \brac{\int_{\R^n \backslash B(x,2t)} \frac{|F(z,y)|^p}{|z-y|^{n}} dz}^{\frac{1}{p}} dy\\
 &\aleq  \sup_{y \in B(x,t)}\brac{\int_{\R^n \backslash B(x,2t)} \frac{|g(z)-(g)_{B(x,t)}|^{p'}}{|z-y|^{n+sp'}}\, dz}^{\frac{1}{p'}}\  \mathcal{M}\|F\|_p(x)\\
 &\aeq  \brac{\int_{\R^n \backslash B(x,2t)} \frac{|g(z)-(g)_{B(x,t)}|^{p'}}{|z-x|^{n+sp'}}\, dz}^{\frac{1}{p'}}\  \mathcal{M} \|F\|_{p}(x).\\
\end{split}
 \]
We estimate
\[
\begin{split}
 &\brac{\int_{\R^n \backslash B(x,2t)} \frac{|g(z)-(g)_{B(x,t)}|^{p'}}{|z-x|^{n+sp'}}\, dz}^{\frac{1}{p'}}\\
 &\aleq \brac{\int_{\R^n \backslash B(x,2t)} \frac{|g(z)-g(x)|^{p'}}{|z-x|^{n+sp'}}\, dz}^{\frac{1}{p'}} + \brac{\int_{\R^n \backslash B(x,2t)} \frac{|g(x)-(g)_{B(x,t)}|^{p'}}{|z-x|^{n+sp'}}\, dz}^{\frac{1}{p'}}\\
 &\aeq \brac{\int_{\R^n \backslash B(x,2t)} \frac{|g(z)-g(x)|^{p'}}{|z-x|^{n+sp'}}\, dz}^{\frac{1}{p'}} + t^{-s} |g(x)-(g)_{B(x,t)}|.
\end{split}
 \]
Thus, by Lemma~\ref{la:maximalest},
\begin{equation}\label{eq:clmsIIest}
 \brac{\int_{\R^n \backslash B(x,2t)} \frac{|g(z)-(g)_{B(x,t)}|^{p'}}{|z-x|^{n+sp'}}\, dz}^{\frac{1}{p'}} \aleq \brac{\int_{\R^n} \frac{|g(z)-g(x)|^{p'}}{|z-x|^{n+sp'}}\, dz}^{\frac{1}{p'}}.
\end{equation}
Consequently, we arrive at
\[
 \sup_{t > 0} II(t,x) \aleq \big ( \mathcal{M} \|F\|_p(x) \big )\ \brac{\int_{\R^n} \frac{|g(z)-g(x)|^{p'}}{|z-x|^{n+sp'}}\, dz}^{\frac{1}{p'}}.
\]
Integrating this, applying H\"{o}lder and then the maximal inequality, similarly as in \eqref{eq:I}, we have shown that
\[
 \|\sup_{t > 0} II(t,x)\|_{L^1(\R^n)} \aleq \|d_s g\|_{L^{p'}(\Ep^1_{od} \R^n)}\|F\|_{L^p(\Ep^1_{od} \R^n)}.
\]
Now we estimate $III(t,x)$. By H\"{o}lder inequality we find
\[
\begin{split}
 III&(t,x)\\ &= t^{-n} \int_{B(x,t)} \int_{\R^n \backslash B(x,2t)}|g(z)-(g)_{B(x,2t)}|\,  |F(z,y)|\ \frac{dz\ dy}{|z-y|^{n+s}}\\
 &\aleq  t^{-n} \int_{B(x,t)} \brac{\int_{\R^n \backslash B(x,2t)} \frac{|g(z)-(g)_{B(x,2t)}|^{p'}}{|z-y|^{n+sp'}}\, dy}^{\frac{1}{p'}}  \brac{\int_{\R^n \backslash B(x,2t)} |F(z,y)|^p\frac{dy}{|z-y|^n}}^{\frac{1}{p}} dz\\
 &\aleq  t^{-n-s} \int_{B(x,t)} |g(z)-(g)_{B(x,2t)}| \brac{\int_{\R^n} |F(z,y)|^p\frac{dy}{|z-y|^n}}^{\frac{1}{p}} dz.
\end{split}
 \]
Enlarging the set on which we integrate to $B(x,2t)$ we obtain a similar estimate as for $I$ (see \eqref{ineq:I}). Therefore, we have
\[
 \begin{split}
  \int_{\R^n}\sup_{t>0} III(t,x) \,dx \aleq \|d_s g\|_{L^{p'}(\Ep^1_{od} \R^n)}\|F\|_{L^p(\Ep^1_{od} \R^n)}.
 \end{split}
\]
Now \eqref{eq:clmshardyestclaim} is established and the proof is complete.
\end{proof}

\begin{proof}[Proof of Proposition~\ref{pr:localclms}]
By a rescaling argument, we may assume that $r = 1$ and $x_0 = 0$. We denote by $B(r)$ balls of radius $r > 0$ centered at the origin.

We need two things. Firstly, a simple H\"older inequality yields
\begin{equation}\label{eq:lclms:goal1}
  \|F \cdot d_s g\|_{L^1(B(10))} \aleq \|F\|_{L^p(\Ep^1_{od} B(10))}\ \|d_s g\|_{L^{p'}(\Ep^1_{od} B(10))}. 
\end{equation}
Secondly, let $\eta_{B(4)}$ be a nonnegative bump function constantly one on $B(4)$ and vanishing on $\R^n \backslash B(5)$. 

We claim that $\eta_{B(4)} F \cdot d_s g \in \mathcal{H}^1_{loc}(B(8))$. More precisely, set
\[
 \Gamma(t,x) :=  |\kappa_t \ast (\eta_{B(4)}\, F \cdot d_s g )(x)|,
\]
then we claim
\begin{equation}\label{eq:lclms:goal2}
 \begin{split}
   \|\sup_{t \in (0,1)} |\Gamma(t,x)|\|_{L^1(B(8))} &\aleq \|F\|_{L^p(\Ep^1_{od} B(10))}\ \|d_s g\|_{L^{p'}(\Ep^1_{od} B(10))}.
 \end{split}
\end{equation}
Assume that \eqref{eq:lclms:goal2} is proven. Let $\eta_{B(3)}$ be another nonnegative bump function constantly one on $B(3)$ and zero on $\R^n \backslash B(4)$. In particular,
\[
 \eta_{B(3)}\ F \cdot d_s g  = \eta_{B(3)}(\eta_{B(4)}\, F \cdot d_s g ).
\]
For
\[
 \lambda := \frac{\int \eta_{B(3)} F \cdot d_s g}{\int \eta_{B(3)}} 
\]
we have by  \cite[Proposition 1.92]{Semmes-1994},
\[
 \|\eta_{B(3)} (F \cdot d_s g - \lambda)\|_{\mathcal{H}^1(\R^n)} \aleq \|F \cdot d_s g\|_{L^1(B(8))} + \|\sup_{t \in (0,1)} |\Gamma(t,x)|\|_{L^1(B(8))}.
\]
In particular, for any $\varphi \in C_c^\infty(B(1))$, 
\[
 \left |\int_{\R^n} \varphi\ F \cdot d_s g \right | \aleq  [\varphi]_{BMO}\ \|\eta_{B(3)} (F \cdot d_s g - \lambda)\|_{\mathcal{H}^1(\R^n)} + |\lambda|\ \|\varphi\|_{L^1(\R^n)}.
\]
That is, once \eqref{eq:lclms:goal2} is established, we have
\[
 \left |\int_{\R^n} \varphi\ F \cdot d_s g \right | \aleq  \brac{[\varphi]_{BMO} + \|\varphi\|_{L^1(\R^n)}}\ \|F\|_{L^p(\Ep^1_{od} B(10))}\ \|d_s g\|_{L^{p'}(\Ep^1_{od} B(10))} .
\]
It remains to prove \eqref{eq:lclms:goal2}. 

We follow the strategy of the proof of Theorem~\ref{th:clmstypestimate} above. The only difference is that we have to take into account the $\eta_{B(4)}$-term. Since $F$ is divergence free,
\[
\begin{split}
 \Gamma&(t,x)\\
 &= \int_{\R^n}\int_{\R^n} \kappa_t(x-y)\, \eta_{B(4)}(y)\, F(z,y)\, \brac{g(z)-g(y)} \frac{dz\, dy}{|y-z|^{n+s}}\\
 &=\int_{\R^n}\int_{\R^n} F(z,y)\, \brac{\eta_{B(4)}(y)\, \kappa_t(x-y)-\eta_{B(4)}(z)\, \kappa_t(x-z)}\, (g(z)-(g)_{B(x,2t)})\, \frac{dz\, dy}{|y-z|^{n+s}}\\
 &=\int_{\R^n}\int_{\R^n} \eta_{B(4)}(z)\, F(z,y)\, \brac{\kappa_t(x-y)-\kappa_t(x-z)}\, (g(z)-(g)_{B(x,2t)})\, \frac{dz\, dy}{|y-z|^{n+s}}\\
 &\quad+\int_{\R^n}\int_{\R^n} F(z,y)\, \brac{\eta_{B(4)}(y)-\eta_{B(4)}(z)}\, \kappa_t(x-y)\, (g(z)-(g)_{B(x,2t)})\, \frac{dz\, dy}{|y-z|^{n+s}}\\
 &=: I(t,x) + II(t,x).
\end{split}
 \]
The first term can be treated similarly as $\Gamma(t,x)$ in the proof of Theorem~\ref{th:clmstypestimate}, and hence we obtain
\[
 \int_{B(8)}\sup_{t \in (0,1)} |I(t,x)| \aleq \|F\|_{L^p(\Ep^1_{od} B(10))}\  \|d_s g\|_{L^{p'}(\Ep^1_{od} B(10))}.
\]
For the second term, using the Lipschitz continuity of $\eta_{B(4)}$, for $t \in (0,1)$ and $x \in B(1)$, we get
\[ 
\begin{split}
II(t,x) &\aleq t^{-n}\int_{B(x,2t)} \int_{B(x,2t)} |F(z,y)|\ |g(z)-(g)_{B(x,2t)}|\ \frac{dz\ dy}{|y-z|^{n+s-1}}\\
&\quad+t^{-n} \int_{B(x,t)}\int_{\R^n \backslash B(x,2t)} |g(z)-(g)_{B(x,2t)}|\,  |F(z,y)|\ \frac{dz\ dy}{|y-z|^{n+s}}\\
&=: II_1(t,x) + II_2(t,x).
\end{split}
\]
To estimate $II_1(t,x)$ we proceed as with $I(t,x)$ in the proof of Theorem~\ref{th:clmstypestimate}, (in comparison to \eqref{eq:splitclms} we gain a $t$), and we obtain
\[
 \|\sup_{t \in (0,1)} II_1(t,\cdot) \|_{L^1(B(8))} \aleq  \|F\|_{L^p(\Ep^1_{od} B(10))}\  \|d_s g\|_{L^{p'}(\Ep^1_{od} B(10))}.
\]
For $II_2(t,x)$ we follow the estimate of $II(t,x)$ in the proof of Theorem~\ref{th:clmstypestimate},
\[
  |II_2(t,x)| \aleq  \brac{\int_{\R^n \backslash B(x,2t)} \frac{|g(z)-(g)_{B(x,t)}|^{p'}}{|z-x|^{n+sp'}}\, dz}^{\frac{1}{p'}}\  \mathcal{M}_{t < 1} \|F\|_{p}(x),\\
 \]
where 
\[
 \mathcal{M}_{t < 1} \|F\|_{p}(x) := \sup_{t \in (0,1)} \mvint_{B(x,t)} \|F\|_{p}(y)\ dy.
\]
From \eqref{eq:clmsIIest} in the proof of Theorem~\ref{th:clmstypestimate} we have 
\[
\brac{\int_{\R^n \backslash B(x,2t)} \frac{|g(z)-(g)_{B(x,t)}|^{p'}}{|z-x|^{n+sp'}}\, dz}^{\frac{1}{p'}} \aleq \brac{\int_{\R^n} \frac{|g(z)-g(x)|^{p'}}{|z-x|^{n+sp'}}\, dz}^{\frac{1}{p'}}.
 \]
Thus, by H\"older inequality
\[
 \|\sup_{t \in (0,1)} II_2(t,\cdot) \|_{L^1(B(8))} \aleq \|F\|_{L^p(\Ep^1_{od} B(10))}\ \|d_s g\|_{L^{p'}(\Ep^1_{od} B(10))}.
\]
\end{proof}

\appendix
\section{Nonlocal antisymmetric potential and the optimal gauge: Proof of Proposition~\ref{pr:genmfdEL} and Theorem~\ref{th:gauge}}
\subsection{The nonlocal antisymmetric potential: Proof of Proposition~\ref{pr:genmfdEL}}\label{s:genmfdEL}
Let $\pi: B_\delta(\mathcal{N}) \to \mathcal{N}\subset \R^N$ be the nearest point projection from a tubular neighborhood of $\mathcal{N}$ into $\mathcal{N}$. For the existence and properties of $\pi$ see, e.g., \cite{Simon-1996}. 
For $u \in \mathcal{N}$ we denote by $\Pi(u)$ the orthogonal projection onto the tangent space $T_u \mathcal{N}$. This is a symmetric matrix, and can be written as 
\[\Pi_{ij}(u) = \partial_{i} \pi^j(\pi(u)),\quad 1\leq i,j \leq N.\]
By $\Pi^\perp(u)$ we denote $I-\Pi(u)$. 

Then, for $u \in \dot{H}^{\frac{1}{2}}(\R,\mathcal{N})$ the distributional formulation of the half-harmonic map equation \eqref{eq:halfharmperp} is
\begin{equation}\label{eq:genmfdstart}
 \int_{\R} d_{\frac{1}{2}} u^i \cdot d_{\frac{1}{2}} (\Pi_{ij}(u)\varphi)(x)\ dx = 0 \quad \mbox{for any $\varphi \in C_c^\infty(\R)$}.
\end{equation}
We rewrite this equation. For $\varphi \in C_c^\infty(\R)$,
\[
\begin{split}
\int_{\R} \laph u(x)\, \varphi(x)\, dx=&\int_{\R}d_{\frac{1}{2}} u^i \cdot d_{\frac{1}{2}} \varphi (x)\ dx\\
 =&\int_{\R}d_{\frac{1}{2}} u^i \cdot d_{\frac{1}{2}} (\Pi_{ij}(u)\varphi)(x)\ dx + \int_{\R}d_{\frac{1}{2}} u^i \cdot d_{\frac{1}{2}} (\Pi_{ij}^\perp(u) \varphi)(x)\ dx.
\end{split}
 \]
The first term is zero by \eqref{eq:genmfdstart}. The second term we write in a double-integral form
\[
\begin{split}
 \int_{\R}d_{\frac{1}{2}}& u^i \cdot d_{\frac{1}{2}} (\Pi_{ij}^\perp(u) \varphi)(x)\ dx\\
 &=\int_{\R}\int_{\R} \brac{u^i(x)-u^i(y)}\, \brac{\Pi_{ij}^\perp(u(x))\, \varphi(x) - \Pi_{ij}^\perp(u(y))\, \varphi(y)}  \frac{dy\ dx}{|x-y|^{2}}\\
 &=\int_{\R}\int_{\R} \brac{u^i(x)-u^i(y)}\, \brac{\Pi_{ij}^\perp(u(x)) - \Pi_{ij}^\perp(u(y))}\, \varphi(x) \frac{dy\ dx}{|x-y|^{2}}\\
 &\quad +\int_{\R}\int_{\R} \brac{u^i(x)-u^i(y)}\, \Pi_{ij}^\perp(u(y))\ \brac{\varphi(x) -\varphi(y))}\ \frac{dy\ dx}{|x-y|^{2}}.\\
\end{split}
 \]
First we observe that the second term behaves well. For the local case, if $u \in W^{1,2}(\R,\mathcal{N})$, then $u' \in T_u \mathcal{N}$ almost everywhere. If that was also true for the fractional gradient, i.e., if we had $u(x)-u(y) \in T_{u(y)} \mathcal{N}$, then the second term above would vanish. But of course, this is in general false. However, the following simple observation provides a quantitative estimate.
\begin{lemma}\label{la:uxmuytangential}
We have \[u(x)-u(y) \in T_{u(y)} \mathcal{N} + O(|u(x)-u(y)|^2),\] more precisely,
\[
 u^i(x)-u^i(y)= \Pi_{ik}(u(y))\ (u^k(x)-u^k(y)) + O(|u(x)-u(y)|^2).
\]
\end{lemma}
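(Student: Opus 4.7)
The strategy is to split $u(x)-u(y)$ into its tangential and normal components with respect to $T_{u(y)}\mathcal{N}$ and to show that the normal component is of quadratic order in $|u(x)-u(y)|$.

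Write
\[
 u^i(x)-u^i(y) = \Pi_{ik}(u(y))\,(u^k(x)-u^k(y)) + \Pi^{\perp}_{ik}(u(y))\,(u^k(x)-u^k(y)),
\]
so the statement reduces to
\[
 \Pi^{\perp}_{ik}(u(y))\,(u^k(x)-u^k(y)) = O(|u(x)-u(y)|^{2}).
\]
Since $\mathcal{N}$ is compact and $|u(x)-u(y)| \leq \diam(\mathcal{N})$, it suffices to establish this estimate in the regime $|u(x)-u(y)| \leq \delta$, where $\delta>0$ is as in the definition of the nearest point projection $\pi\colon B_{\delta}(\mathcal{N})\to\mathcal{N}$; the bound for larger distances then follows by absorbing a bounded factor into the implicit constant.

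For $|u(x)-u(y)|\leq \delta$, the point $u(x)$ lies in the tubular neighborhood $B_{\delta}(\mathcal{N})$, and we exploit that $\pi(u(x))=u(x)$ because $u(x)\in\mathcal{N}$. A first-order Taylor expansion of $\pi$ around $u(y)$ together with the identity $D\pi(u(y)) = \Pi(u(y))$ (recalled just before the lemma) gives
\[
 u^{i}(x) = \pi^{i}(u(x)) = \pi^{i}(u(y)) + \Pi_{ik}(u(y))\,(u^{k}(x)-u^{k}(y)) + O(|u(x)-u(y)|^{2}),
\]
where the quadratic error constant depends only on the $C^{2}$-bound of $\pi$ on $B_{\delta}(\mathcal{N})$. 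Since $\pi(u(y))=u(y)$, subtracting $u^{i}(y)$ yields the claim.

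The only subtlety is to justify that the asserted pointwise estimate is uniform in $x,y$, but this follows from the compactness of $\mathcal{N}$ together with the smoothness of the nearest point projection; there is no analytic obstacle beyond recalling the elementary differential-geometric fact that a smooth submanifold deviates from its tangent space at second order.
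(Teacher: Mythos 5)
Your proof is correct and follows essentially the same route as the paper: a first-order Taylor expansion of the nearest-point projection $\pi$ at $u(y)$, using $\pi(u(x))=u(x)$, $\pi(u(y))=u(y)$ and $D\pi(u(y))=\Pi(u(y))$. The extra care you take with the tubular-neighborhood radius $\delta$ and the trivial case $|u(x)-u(y)|>\delta$ is a welcome (if standard) point of rigor that the paper leaves implicit.
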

In particular, we have
\[
\begin{split}
 \int_{\R}\int_{\R} \brac{u^i(x)-u^i(y)}\,& \Pi_{ij}^\perp(u(y))\  \brac{\varphi(x) -\varphi(y))}\ \frac{dy\ dx}{|x-y|^{2}}\\
 & \aleq \int_{\R}\int_{\R} |d_{\frac{1}{4}}u(x,y)|^2\ |d_{\frac{1}{2}}\varphi(x,y)| \frac{dy\ dx}{|x-y|}.
 \end{split}
\]

\begin{proof}[Proof of Lemma~\ref{la:uxmuytangential}]
Since $u(x), u(y) \in \mathcal{N}$, we have $u(x) = \pi(u(x))$, $u(y) = \pi(u(y))$. Then by the Taylor expansion of $\pi$,
\[
\begin{split}
 u^i(x) - u^i(y) &= \pi^i(u(x))-\pi^i(u(y)) \\
 &= \Pi_{ki}(u(y))\ (u^k(x)-u^k(y)) + O(|u(x)-u(y)|^2).
\end{split}
 \]
\end{proof}
So far we have shown that \eqref{eq:genmfdstart} implies
\begin{equation}\label{eq:sofarshown}
 \laph u^j = d_{\frac{1}{2}} \Pi^\perp_{ij}(u)  \cdot d_{\frac{1}{2}} u^i + R_1,
 \end{equation}
where $R_1$ satisfies the error estimate \eqref{eq:genmfd:errorestimate}. Again we use Lemma~\ref{la:uxmuytangential}
\[
 \begin{split}
 d_{\frac{1}{2}}& \Pi^\perp_{ij}(u)  \cdot d_{\frac{1}{2}} u^i(x)\\
 &=\int_{\R} \brac{\Pi^\perp_{ij}(u(x))-\Pi^\perp_{ij}(u(y))}\ \brac{u^i(x)-u^i(y)}\ \frac{dx\ dy}{|x-y|^2}\\
 &=\int_{\R} \brac{\Pi^\perp_{ij}(u(x))-\Pi^\perp_{ij}(u(y))}\ \Pi_{ik}(u(y))\, \brac{u^k(x)-u^k(y)}\ \frac{dx\ dy}{|x-y|^2}+R_2,
\end{split}
 \]
where $R_2$ again satisfies \eqref{eq:genmfd:errorestimate}, since $\Pi$ is Lipschitz and thus
\[
 |R_2(x)| \aleq \int_{\R} |u(x)-u(y)|^3\ \frac{dx\ dy}{|x-y|^2}. 
\]
Now let 
\[
 \Omega_{jk}(x,y) := \frac{\brac{\Pi^\perp_{ij}(u(x))-\Pi^\perp_{ij}(u(y))}\ \Pi_{ik} (u(y)) - \brac{\Pi^\perp_{ik}(u(x))-\Pi^\perp_{ik}(u(y))}\ \Pi_{ij} (u(y))}{|x-y|^{\frac{1}{2}}}.
\]
Clearly, $\Omega_{jk} = -\Omega_{kj} \in L^2(\Ep^1_{od} \R)$, since by assumption $d_{\frac{1}{2}} u \in L^2(\Ep^1_{od} \R)$ and $u \in L^\infty$. We conclude
\[
 \laph u^j = \Omega_{jk} \cdot d_{\frac{1}{2}} u^k + R_1 + R_2 + R_3.
 \]
Here, $R_3$ satisfies
\[
 |R_3(x)| \aleq \int_{\R} |u(x)-u(y)|^3\ \frac{dx\ dy}{|x-y|^2},
\]
by Lemma~\ref{la:uxmuytangential} and 
\[
\begin{split}
\brac{\Pi^\perp_{ik}(u(x))-\Pi^\perp_{ik}(u(y))}\ \Pi_{ij} (u(y)) & \equiv \Pi^\perp_{ik}(u(x))\ \Pi_{ij} (u(y))\\
&\equiv -(\Pi_{ij} (u(x)) - \Pi_{ij} (u(y)))\, \Pi^\perp_{ik}(u(x)). 
\end{split}
\]
This proves Proposition~\ref{pr:genmfdEL}.\qed

\subsection{Finding the optimal gauge: Proof of Theorem~\ref{th:gauge}}\label{s:gaugeconstruction}
Theorem~\ref{th:gauge} follows from the next two propositions for $s = \frac{1}{2}$ and $n=1$. The argument is an extension of the second author's \cite{Schikorra-2010}, which in turn is based on the moving frame method argument due to H\'elein \cite{Helein91}.
\begin{proposition}
Let $s \in (0,1)$, $n,N \in \N$. For any $\Omega_{ij}(x,y) \in L^2(\Ep^1_{od}\R^n)$, $i =1,\ldots,N$ there exists a minimizer $P \in \dot{H}^{s}(\R^n,SO(N))$ to
\[
\mathcal{F}(Q) := \int_{\R^n}\int_{\R^n} \left |d_sQ(x,y) - Q(x) \Omega(x,y) \right |^2\ \frac{dx\ dy}{|x-y|^n} , \quad Q \in \dot{H}^{s}(\R^n,SO(N)).
\]
This minimizer satisfies
\[
 \|P\|_{\dot{H}^{s}(\R^n)} \aleq \|\Omega\|_{L^2(\Ep^1_{od}\R^n)}.
\]
\end{proposition}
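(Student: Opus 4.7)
My plan is to use the direct method of the calculus of variations on the nonlinear constraint $SO(N)$, following the strategy from the moving frame construction.

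First, I would establish coercivity. Since $Q(x) \in SO(N)$ pointwise, one has $|Q(x)\Omega(x,y)|_F = |\Omega(x,y)|_F$, so $\|Q\Omega\|_{L^2(\Ep^1_{od}\R^n)} = \|\Omega\|_{L^2(\Ep^1_{od}\R^n)}$. Expanding the square and applying Cauchy--Schwarz,
\[
 \mathcal{F}(Q) \geq \|d_s Q\|_{L^2(\Ep^1_{od}\R^n)}^2 - 2\|d_s Q\|_{L^2(\Ep^1_{od}\R^n)}\|\Omega\|_{L^2(\Ep^1_{od}\R^n)} + \|\Omega\|_{L^2(\Ep^1_{od}\R^n)}^2 = \bigl(\|d_s Q\|_{L^2(\Ep^1_{od}\R^n)} - \|\Omega\|_{L^2(\Ep^1_{od}\R^n)}\bigr)^2.
\]
Since $\mathcal{F}(I) = \|\Omega\|_{L^2(\Ep^1_{od}\R^n)}^2$, the infimum is finite and any minimizing sequence $\{Q_k\}$ satisfies $[Q_k]_{W^{s,2}(\R^n)} = \|d_s Q_k\|_{L^2(\Ep^1_{od}\R^n)} \leq 2\|\Omega\|_{L^2(\Ep^1_{od}\R^n)}$.

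Next I would extract a good subsequence. Because $|Q_k(x)|_F \equiv \sqrt{N}$, the sequence is uniformly bounded in $L^\infty$, hence bounded in $H^s(B)$ for every ball $B \subset \R^n$. By the Rellich--Kondrachov compactness of $H^s(B) \hookrightarrow L^2(B)$ and a standard diagonal extraction, a subsequence (still denoted $Q_k$) converges a.e.\ to some $P: \R^n \to \R^{N\times N}$. Since $SO(N)$ is closed, $P \in SO(N)$ a.e. Moreover $\{d_s Q_k\}$ is bounded in the Hilbert space $L^2(\Ep^1_{od}\R^n)$, so by Banach--Alaoglu a further subsequence converges weakly to some $G$; pointwise a.e.\ convergence $d_s Q_k(x,y) \to d_s P(x,y)$ forces $G = d_s P$, and consequently $P \in \dot{H}^s(\R^n, SO(N))$.

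The remaining step is lower semicontinuity. Writing
\[
 \mathcal{F}(Q) = \|d_s Q\|_{L^2(\Ep^1_{od}\R^n)}^2 - 2\int_{\R^n}\!\!\int_{\R^n} \langle d_s Q(x,y), Q(x)\Omega(x,y)\rangle \frac{dx\,dy}{|x-y|^n} + \|\Omega\|_{L^2(\Ep^1_{od}\R^n)}^2,
\]
the first term is weakly lower semicontinuous in $L^2(\Ep^1_{od}\R^n)$. For the cross term, dominated convergence applied to $|Q_k(x)-P(x)|^2\|\Omega\|_2^2(x) \leq 4N \|\Omega\|_2^2(x) \in L^1(\R^n)$ yields $Q_k\Omega \to P\Omega$ strongly in $L^2(\Ep^1_{od}\R^n)$, so the weak--strong pairing with $d_s Q_k \rightharpoonup d_s P$ gives convergence of the cross term. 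Hence $\mathcal{F}(P) \leq \liminf_k \mathcal{F}(Q_k) = \inf \mathcal{F}$, so $P$ is a minimizer, and the bound $\mathcal{F}(P) \leq \mathcal{F}(I) = \|\Omega\|_{L^2(\Ep^1_{od}\R^n)}^2$ together with the coercivity inequality above yields $\|P\|_{\dot{H}^s(\R^n)} \leq 2\|\Omega\|_{L^2(\Ep^1_{od}\R^n)}$.

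The main obstacle I anticipate is the nonlinear target constraint: weak compactness in $\dot{H}^s$ alone cannot guarantee that the limit still takes values in $SO(N)$. This is why the $L^\infty$ bound obtained from $Q_k(x) \in SO(N)$ is crucial --- it upgrades the $\dot{H}^s$-bound to an $H^s_{loc}$-bound so that Rellich compactness produces a.e.\ convergence, which both places $P$ back on $SO(N)$ and drives the dominated-convergence argument for the cross term.
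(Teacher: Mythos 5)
Your argument is correct and follows essentially the same direct-method strategy as the paper: coercivity from the pointwise $SO(N)$ constraint, extraction of a subsequence converging weakly in $\dot{H}^s$ and pointwise a.e., and lower semicontinuity of $\mathcal{F}$. The only real difference is the final step, where the paper simply applies Fatou's lemma to the nonnegative integrand $|d_sP_k(x,y)-P_k(x)\Omega(x,y)|^2$, which converges pointwise a.e.\ in $(x,y)$ --- this sidesteps your expansion of the square and the weak--strong pairing for the cross term, but both routes are valid.
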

\begin{proof}
We have
\[
 [Q]_{\dot{H}^s(\R^n,\R^{N \times N})} \leq \mathcal{F}(Q) + \|\Omega\|_{L^2(\Ep^1_{od}\R^n)}.
\]

Let $P_k$ be a minimizing sequence in $\dot{H}^{s}(\R^n,SO(N))$. Since $P_k$ maps into $SO(N)$ pointwise a.e., it is, in particular, bounded. Thus, up to a subsequence, we can assume that $P_k$ converges to some $P$ weakly in $\dot{H}^{s}(\R^n)$, strongly in $L^2_{loc}(\R^n)$ and pointwise almost everywhere. Thus $P \in \dot{H}^s(\R^n,SO(N))$. 

The Lemma of Fatou, simply by pointwise a.e. convergence of $P_k$ to $P$, implies
\[
 \mathcal{F}(P) \leq \liminf_{k \to \infty} \mathcal{F}(P_k).
\]
Thus, $P$ is indeed a minimizer. The norm estimate follows since $Q \equiv I_{N \times N}$ is admissible.
\end{proof}

\begin{proposition}
Let $s \in (0,1)$, $n,N \in \N$. Let $P \in \dot{H}^{s}(\R^n,SO(N))$ be a critical point of $\mathcal{F}$ in the class of maps in $\dot{H}^{s}(\R^n,SO(N))$. Then, for $\Omega^P \in L^2(\Ep^{1}_{od} \R^n)$ given by
\[
 \Omega^P(x,y) =\frac{1}{2} \brac{d_sP(x,y) \brac{P^T(y) + P^T(x)} - P(x) \Omega(x,y) P^T(y) + P(y) \Omega^T(x,y) P^T(x)} 
\]
it holds 
\[
\div_s \Omega^P = 0. 
\]
\end{proposition}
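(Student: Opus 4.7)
The plan is to compute the Euler--Lagrange equation of $\mathcal{F}$ at the critical point $P$ via admissible variations that remain in $SO(N)$, and recognize the resulting identity as $\div_s\Omega^P=0$. Fix $\alpha\in C_c^\infty(\R^n,\mathfrak{so}(N))$ (so $\alpha^T=-\alpha$ pointwise), and set $P_t:=e^{t\alpha}P$; then $P_t\in\dot H^s(\R^n,SO(N))$ with $\dot P:=\tfrac{d}{dt}\big|_{t=0}P_t=\alpha P$. Writing $E(x,y):=d_sP(x,y)-P(x)\Omega(x,y)$, the product rule
\[
 d_s(\alpha P)(x,y) = \alpha(x)\,d_sP(x,y) + d_s\alpha(x,y)\,P(y)
\]
yields $\tfrac{d}{dt}|_{t=0}(d_sP_t - P_t\Omega)(x,y) = \alpha(x)E(x,y) + d_s\alpha(x,y)P(y)$. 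The criticality condition $\tfrac{1}{2}\tfrac{d}{dt}|_{t=0}\mathcal F(P_t)=0$ splits into two pieces: the first, $\int\int \operatorname{tr}(EE^T\alpha(x)^T)\tfrac{dx\,dy}{|x-y|^n}$, vanishes because $EE^T$ is symmetric while $\alpha(x)^T$ is antisymmetric. Using $d_s\alpha^T = -d_s\alpha$ and cyclicity of the trace on the remaining piece gives
\[
 \int_{\R^n}\int_{\R^n} \operatorname{tr}\bigl(d_s\alpha(x,y)\,E(x,y)P^T(y)\bigr)\,\tfrac{dx\,dy}{|x-y|^n}=0 \quad \forall\,\alpha\in C_c^\infty(\R^n,\mathfrak{so}(N)).
\]

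Next I would symmetrize under $x\leftrightarrow y$. Relabelling and using $d_s\alpha(y,x)=-d_s\alpha(x,y)$ together with $d_sP(y,x)=-d_sP(x,y)$, the identity is equivalent to $\int\int \operatorname{tr}(d_s\alpha(x,y)\,M(x,y))\,\tfrac{dx\,dy}{|x-y|^n}=0$ with
\[
 M(x,y) := d_sP(x,y)\bigl(P^T(y)+P^T(x)\bigr) - P(x)\Omega(x,y)P^T(y) + P(y)\Omega(y,x)P^T(x).
\]
A short calculation using $P(x)P^T(x)=I$ shows $d_sP(x,y)(P^T(y)+P^T(x)) = (P(x)P^T(y)-P(y)P^T(x))/|x-y|^s$, which is matrix-antisymmetric; combined with the natural conjugation convention $\Omega(y,x)=\Omega^T(x,y)$ for the connection form, this identifies $M=2\Omega^P$ and in particular shows that $\Omega^P$ is pointwise antisymmetric as an $N\times N$ matrix.

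To conclude entry-wise divergence freeness, for each fixed pair $i<j$ and an arbitrary scalar $\varphi\in C_c^\infty(\R^n)$, I would plug in $\alpha(x):=\varphi(x)(e_ie_j^T-e_je_i^T)\in\mathfrak{so}(N)$. Expanding $\operatorname{tr}(d_s\alpha\cdot\Omega^P)$ and using the matrix-antisymmetry of $\Omega^P$ collapses everything to the single scalar identity
\[
 \int_{\R^n}\int_{\R^n} d_s\varphi(x,y)\,(\Omega^P)_{ij}(x,y)\,\tfrac{dx\,dy}{|x-y|^n}=0,
\]
which is exactly the distributional definition of $\div_s(\Omega^P)_{ij}=0$; the remaining entries follow from the matrix antisymmetry of $\Omega^P$. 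The main delicate point is the symmetrization step identifying $M=2\Omega^P$: it requires recognizing the $\Omega(y,x)$ produced by the swap as $\Omega^T(x,y)$ appearing in the stated formula, which is the fractional analogue of the standard conjugation symmetry for antisymmetric connection $1$-forms, and is also consistent with the fact that the $s$-divergence is insensitive to the $(x,y)$-symmetric part of its argument.
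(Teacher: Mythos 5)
Your overall strategy coincides with the paper's: differentiate $\mathcal F$ along $P_t=e^{t\alpha}P$, kill the term quadratic in $E=d_sP-P\Omega$ by antisymmetry of $\alpha$, and read the surviving term as a distributional divergence-free condition. Up to the identity
\[
\int_{\R^n}\int_{\R^n}\operatorname{tr}\brac{d_s\alpha(x,y)\,E(x,y)\,P^T(y)}\,\frac{dx\,dy}{|x-y|^n}=0\qquad\text{for all admissible }\alpha,
\]
your computation is correct (and slightly more general than the paper's, which only uses $\alpha(x)=\varphi(x)\beta$ with $\beta\in so(N)$ constant).

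The gap is precisely the step you flag as delicate. The ``conjugation convention'' $\Omega(y,x)=\Omega^T(x,y)$ is not among the hypotheses: the proposition assumes only $\Omega\in L^2(\Ep^1_{od}\R^n)$, and the paper explicitly stresses that off-diagonal vector fields carry no symmetry in $(x,y)$ (in the application $\Omega$ is antisymmetric in the \emph{matrix} indices, which says nothing about swapping $x$ and $y$). Writing $A(x,y)=d_sP(x,y)P^T(y)-P(x)\Omega(x,y)P^T(y)$, one checks that $A(y,x)=A(x,y)^T-P(y)\brac{\Omega(y,x)-\Omega^T(x,y)}P^T(x)$, so your $M=A(x,y)-A(y,x)$ differs from $2\Omega^P=A-A^T$ by the term $P(y)\brac{\Omega^T(x,y)-\Omega(y,x)}P^T(x)$; in particular $M$ need not be matrix-antisymmetric, and your final specialization only yields $\div_s(M-M^T)_{ij}=0$, not the claimed $\div_s(\Omega^P)_{ij}=0$. (One \emph{can} still rescue the conclusion from your symmetrized identity, since $\div_s$ annihilates $(x,y)$-symmetric parts and the $(x,y)$-antisymmetric part of $\Omega^P$ equals $\tfrac14(M-M^T)$, but that is not the argument you give.)

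The clean repair shows the symmetrization is an unnecessary detour and is what the paper does: specialize $\alpha=\varphi\beta$ in the \emph{unsymmetrized} identity. Since $\beta\in so(N)$ is constant, $\operatorname{tr}(\beta A)$ only sees the matrix-antisymmetric part $\tfrac12(A-A^T)$, and the pointwise algebra
\[
d_sP(x,y)\,P^T(y)-P(y)\,(d_sP(x,y))^T=\frac{P(x)P^T(y)-P(y)P^T(x)}{|x-y|^s}=d_sP(x,y)\brac{P^T(y)+P^T(x)}
\]
(using $P(y)P^T(y)=I$) identifies $\tfrac12(A-A^T)$ with the stated $\Omega^P$ --- with $\Omega^T(x,y)$ the matrix transpose at the \emph{same} pair $(x,y)$, no swap of arguments and no assumption on $\Omega(y,x)$ needed.
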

Observe that if $\Omega_{ij}(x,y) = -\Omega_{ji}(x,y)$ almost everywhere, then 
\[
 \Omega^P(x,y) = \frac{1}{2} \brac{d_sP(x,y) \brac{P^T(y) + P^T(x)} - P(x) \Omega(x,y) P^T(y) - P(y) \Omega(x,y) P^T(x)}.
\]
\begin{proof}
Let $P \in  L^\infty \cap \dot{H}^{s}(\R^n,SO(N))$ be a critical point of $\mathcal{F}$.

To compute the Euler--Lagrange equation, define for some $\varphi \in C_c^\infty(\R^n)$ and a constant $\alpha \in so(N)$ the variation 
\[
 P_t(x) := e^{t \alpha \varphi(x)} P(x),\quad t \in \R.
\]
Clearly, $P_t \in \dot{H}^s(\R^n,SO(N))$ and $P_0 = P$, so $P_t$ is an admissible variation of $P$. Moreover,
\[
 \frac{d}{dt} P_t(x) = \varphi(x)\ \alpha\, P(x)
\]
and, since $P$ is critical,
\begin{equation}\label{eq:PEL}
\begin{split}
 0 &= \frac{d}{dt} \Big |_{t = 0}  \mathcal{F}(P_t)\\
 &= 2\int_{\R^n}\int_{\R^n} \brac{d_sP(x,y) - P(x)\, \Omega(x,y)}: \frac{d}{dt} \Big|_{t=0}\brac{d_sP_t(x,y) - P_t(x) \Omega(x,y)} \ \frac{dx\ dy}{|x-y|^n}.
\end{split}
 \end{equation}
Here $A:B := \sum_{i,j=1}^N A_{ij}B_{ij}$ is the Hilbert--Schmidt scalar product for matrices. We compute
\[
 \begin{split}
 \frac{d}{dt} \Big|_{t=0} d_sP_t(x,y) - P_t(x) \Omega(x,y)
 =\alpha\, d_s (\varphi\, P)(x,y)-  \alpha\, P(x)\, \Omega(x,y)\, \varphi(x).
 \end{split}
\]
Now
\[
 d_s (\varphi\, P)(x,y)  =  P(y)\, d_s \varphi(x,y)  + d_s P(x,y)\, \varphi(x),
\]
and we arrive at
\[
 \begin{split}
 \frac{d}{dt} \Big|_{t=0} d_s P_t(x,y) &- P_t(x) \Omega(x,y)\\
 &=\alpha \brac{d_s P(x,y) - P(x) \Omega(x,y)}\ \varphi(x) +  \alpha P(y)\, d_s \varphi(x,y).
 \end{split}
\]
Since $\alpha \in so(N)$, we have
\[
 \brac{d_s P(x,y) - P(x) \Omega(x,y)}: \alpha \brac{d_s P(x,y) - P(x) \Omega(x,y)} \equiv 0.
\]
Thus \eqref{eq:PEL} can be rewritten as
\[
\begin{split}
 0 &= \int_{\R^n}\int_{\R^n} \brac{d_sP(x,y) - P(x) \Omega(x,y)}: \alpha P(y)\, d_s \varphi(x,y) \ \frac{dx\ dy}{|x-y|^n}\\
 &= \int_{\R^n}\int_{\R^n} \brac{d_sP(x,y)P^T(y)  - P(x) \Omega(x,y) P^T(y)}: \alpha \, d_s \varphi(x,y) \ \frac{dx\ dy}{|x-y|^n}.
\end{split}
 \]
This holds for any antisymmetric $\alpha \in so(N)$, so we have componentwise
\[
 \div_s \Omega^P = 0,
\]
for 
\[
\begin{split}
 \Omega^P(x,y) &= \operatorname{so}\brac{d_sP(x,y) P^T(y) - P(x) \Omega(x,y) P^T(y)}\\
 &=\frac{1}{2} \brac{d_sP(x,y) \brac{P^T(y) + P^T(x)} - P(x) \Omega(x,y) P^T(y) + P(y) \Omega^T(x,y) P^T(x)}.\\ 
\end{split}
 \]
\end{proof}

\section{Euler--Lagrange equations for \texorpdfstring{$W^{s,p}$}{W(s,p)}-harmonic maps into homogeneous Riemannian manifolds: Proof of Lemma~\ref{la:homoEL}}\label{s:homoEL}
Let the $W^{s,p}$-energy $\mathcal{E}_{s,p}$ be given by
\[
 \mathcal{E}_{s,p}(v) := [v]_{W^{s,p}(\R^n)}^p.
\]
Then $W^{s,p}$-harmonic maps into a smooth, compact manifold $\mathcal{N}$ without boundary are maps that satisfy the Euler--Lagrange equation of $\mathcal{E}_{s,p}$ with the side condition $v(x) \in \mathcal{N}$ for almost every $x \in \R^n$. Namely, a $W^{s,p}$-harmonic map into $\mathcal{N}$ is a distributional solution to
\[
\div_s (|d_s u|^{p-2} d_s u) \perp T_u\mathcal{N}.
\]
Recall from section \ref{s:genmfdEL} that we denote by $\Pi: \mathcal{N} \to \R^{N \times N}$ the projection onto the tangent plane $T\mathcal{N}$. Then $u$ is $W^{s,p}$-harmonic if and only if
\begin{equation}\label{eq:EL}
 \int_{\R^n} \brac{|d_s u|^{p-2} d_su^i} \cdot d_s (\Pi_{ij}(u) \varphi)(x)\ dx = 0 \quad \mbox{for all }\varphi \in C_c^\infty(\R^n) \quad \mbox{for $i=1,\ldots,N$}.
\end{equation}
Similarly to \eqref{eq:sofarshown} we find
\begin{lemma}\label{la:ELeq}
Let $u$ be a $W^{s,p}$-harmonic map into $\n$. Then,
\begin{align*}
 &\int_{\R^n} \brac{|d_s u|^{p-2} d_su^j} \cdot d_s \varphi(x)\ dx\\
 &=\int_{\R^n} \int_{\R^n} |d_s u(x,y)|^{p-2} d_s u^i(x,y)\ d_s\Pi_{ij}(u)(x,y)\ \varphi(x)\ \frac{dx\ dy}{|x-y|^{n}}+ \int_{\R^n} R\ \varphi.
\end{align*}
Here, the error term $R$ satisfies
\[
\left |\int_{\R^n} R\ \varphi \right | \aleq  \int_{\R^n}\int_{\R^n} |d_{\frac{s}{p'}}u(x,y)|^p\ |d_{s}\varphi(x,y)|\ \frac{dy\, dx}{|x-y|^n}.
\]
\end{lemma}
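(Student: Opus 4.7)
\textbf{Proof proposal for Lemma~\ref{la:ELeq}.}

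The plan is to start from the weak Euler--Lagrange equation \eqref{eq:EL}: for any $\varphi \in C_c^\infty(\R^n)$ and any index $j \in \{1,\ldots,N\}$,
\[
\int_{\R^n} |d_s u|^{p-2}\, d_s u^i \cdot d_s(\Pi_{ij}(u)\varphi)\, dx = 0.
\]
The first step is to use the orthogonal decomposition $\delta_{ij} = \Pi_{ij}(u) + \Pi_{ij}^\perp(u)$, which immediately gives
\[
\int_{\R^n} |d_s u|^{p-2}\, d_s u^j \cdot d_s \varphi\, dx = \int_{\R^n} |d_s u|^{p-2}\, d_s u^i \cdot d_s\bigl(\Pi_{ij}^\perp(u)\varphi\bigr)\, dx,
\]
since the $\Pi_{ij}(u)\varphi$ piece is killed by \eqref{eq:EL}. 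This is the same device used in Section~\ref{s:genmfdEL} to rewrite the half-harmonic map equation.

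Next I apply the fractional product rule
\[
d_s(fg)(x,y) = f(y)\, d_s g(x,y) + g(x)\, d_s f(x,y),
\]
verified by adding and subtracting $f(y)g(x)$, with the choice $f = \Pi_{ij}^\perp(u)$ and $g = \varphi$. This splits the right-hand side above into two double integrals:
\[
\int\!\!\int |d_s u|^{p-2}\, d_s u^i\, \Pi_{ij}^\perp(u(y))\, d_s\varphi\, \frac{dx\,dy}{|x-y|^n} + \int\!\!\int |d_s u|^{p-2}\, d_s u^i\, \varphi(x)\, d_s \Pi_{ij}^\perp(u)\, \frac{dx\,dy}{|x-y|^n}.
\]
Since $d_s \Pi_{ij}^\perp(u) = -d_s \Pi_{ij}(u)$, the second double integral is (up to sign) precisely the main term appearing on the right-hand side of the claim. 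The first double integral is what must be absorbed into the error term $R$.

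The core of the argument is the estimate on this first integral, and this is where the geometry of $\n$ enters. I would invoke Lemma~\ref{la:uxmuytangential}: since $u(x), u(y) \in \n$,
\[
\Pi_{ij}^\perp(u(y))\,\bigl(u^i(x) - u^i(y)\bigr) = O(|u(x) - u(y)|^2),
\]
so that pointwise
\[
\bigl|\Pi_{ij}^\perp(u(y))\, d_s u^i(x,y)\bigr| \aleq \frac{|u(x)-u(y)|^2}{|x-y|^s}.
\]
Combining this with $|d_s u(x,y)|^{p-2} = |u(x)-u(y)|^{p-2}/|x-y|^{s(p-2)}$ and using the identity $s(p-1) = sp/p'$, I obtain
\[
\Bigl| |d_s u(x,y)|^{p-2}\, d_s u^i(x,y)\, \Pi_{ij}^\perp(u(y)) \Bigr| \aleq \frac{|u(x)-u(y)|^p}{|x-y|^{sp/p'}} = |d_{s/p'} u(x,y)|^p,
\]
which, paired with $|d_s\varphi(x,y)|$ inside the double integral, produces exactly the error estimate in the statement.

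The main obstacle is the geometric step: $d_s u(x,y)$ is \emph{not} tangential to $\n$ at $u(y)$, only tangential modulo a quadratic correction, so one has to exploit Lemma~\ref{la:uxmuytangential} in precisely the form above to kill the normal component. Once this is in place, the exponent bookkeeping $s(p-1) = sp/p'$ is routine, and sign conventions arising from the choice of product rule can be checked to be consistent via the $x \leftrightarrow y$ symmetry of the double integral together with the antisymmetry of $d_s u$ and $d_s \Pi_{ij}(u)$ in their two arguments.
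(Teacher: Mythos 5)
Your argument is correct and is essentially the paper's own (which is only sketched there by reference to the computation leading to \eqref{eq:sofarshown}): split off the tangential part using \eqref{eq:EL}, apply the discrete Leibniz rule, and control the term carrying $\Pi^\perp_{ij}(u(y))\, d_s u^i(x,y)$ by Lemma~\ref{la:uxmuytangential}, with the exponent bookkeeping $s(p-1)=sp/p'$ exactly as you state. One small correction: your computation produces the main term with $d_s\Pi^\perp_{ij}(u)=-d_s\Pi_{ij}(u)$, and this sign cannot be reconciled with the printed statement by swapping $x\leftrightarrow y$ as you suggest, since $d_s u^i(x,y)\, d_s\Pi_{ij}(u)(x,y)$ is a product of two kernels that are each antisymmetric in $(x,y)$ and is therefore symmetric under the swap. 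The discrepancy lies in the paper's statement itself --- compare with \eqref{eq:sofarshown}, which keeps $d_{\frac12}\Pi^\perp_{ij}(u)$ --- and is immaterial for the use made of the lemma, where only the size of the coefficient enters.
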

From now on throughout the section we assume that $\n$ is a homogeneous Riemannian manifold which is equivariantly embedded into $\R^N$.
We recall that a homogeneous space $\n$ is a quotient space $G/H$, where $G$ is a connected Lie group and $H$ is a closed subgroup. Let $m=\dim(G)$, with the help of \cite{Freire} or \cite[Lemma 2]{Helein91-sym}, we find a family of $G$-Killing vector fields $\{X_\alpha\}_{\alpha=1}^m$ on $\n$ and a family $\{Y_\alpha\}_{\alpha=1}^m$ of smooth tangent vector fields, such that for any $y\in\n$
\[
 v = \sum_{\alpha=1}^m \left\langle X_\alpha, v\right\rangle Y_\alpha \quad  \text{ for any } v\in T_y\n.
\]
We will use the following two properties of Killing fields: Firstly, from equation (19) in \cite{Schikorra-Sire-Wang-2015} we have the following nonlocal Killing field property
\begin{equation}\label{eq:killingprop}
\left\langle X_\alpha(p) - X_\alpha(q),p-q\right\rangle =0, \quad \text{ for all } p,\ q\in\n,\ 1\le\alpha\le m.
\end{equation}
Secondly, we have that the projection $\Pi_{ij}(p)$ into the tangent plane $T_p \mathcal{N}$ can be written as
\begin{equation}\label{eq:killingprojection}
 \Pi_{ij}(p) = \sum_{\alpha=1}^m X^i_\alpha(p)Y^j_\alpha(p) = \sum_{\alpha=1}^m Y_\alpha^i(p)X^j_\alpha(p),\quad \text{ for all } p\in\n.
\end{equation}
New we follow the arguments of the local case, see \cite{Helein91-sym} and \cite[pp.90--91]{Toro-Wang}, to rewrite the Euler--Lagrange equations in such a way that a fractional div-curl quantity appears.  
\begin{proof}[Proof of Lemma \ref{la:homoEL}]
By Lemma \ref{la:ELeq} and \eqref{eq:killingprojection}, any $W^{s,p}$-harmonic map $u$ satisfies
\[
\int_{\R^n} |d_su|^{p-2}d_su^j\cdot d_s \varphi = \int_{\R^n} \int_{\R^n} |d_su|^{p-2}d_su^i \ d_s \brac{X_\alpha^i(u)Y_\alpha^j(u)}(x,y)\ \varphi(x) \frac{dx\ dy}{|x-y|^n}
+ \int_{\R^n} R\ \varphi.
\]
From property \eqref{eq:killingprop} for each $\alpha$,
\begin{equation}\label{eq:kill1}
 d_s u^i(x,y)\ d_s X_\alpha^i(u)(x,y) = 0.
\end{equation}
Thus,
\begin{equation}\label{eq:kill2}
  d_s u^i(x,y) \ d_s \brac{X_\alpha^i(u)Y_\alpha^j(u)}(x,y) = X_\alpha^i(u(x)) \ d_s u^i(x,y) \ d_sY_\alpha^j(u)(x,y).
\end{equation}
Plugging this in
\[
 \begin{split}
  &\int_{\R^n} \int_{\R^n} |d_su|^{p-2}d_su^i \, d_s \brac{X_\alpha^i(u)Y_\alpha^j(u)}(x,y)\, \varphi(x) \frac{dx\, dy}{|x-y|^n}\\
  &=  \int_{\R^n} \int_{\R^n}X_\alpha^i(u(x))\varphi(x) |d_s u(x,y)|^{p-2} \, d_s u^i(x,y)\, d_sY_\alpha^j(u)(x,y)\, \frac{dx\, dy}{|x-y|^n}\\
  &= \frac{1}{2}\int_{\R^n} \int_{\R^n}\varphi(x)\, |d_s u(x,y)|^{p-2} \, d_s u^i(x,y)\brac{X_\alpha^i(u(x)) + X_\alpha^i(u(y))}\, d_sY_\alpha^j(u)(x,y)\, \frac{dx\, dy}{|x-y|^n}\\
  &\quad + \frac{1}{2}\int_{\R^n} \int_{\R^n} \varphi(x)\, |d_s u(x,y)|^{p-2} \, d_s u^i(x,y)\brac{X_\alpha^i(u(x)) - X_\alpha^i(u(y))}\, d_sY_\alpha^j(u)(x,y)\, \frac{dx\, dy}{|x-y|^n}.
\end{split}
\]
The last term is zero, again by \eqref{eq:kill2}. We set
\begin{equation}\label{eq:defomega}
 \Omega_\alpha (x,y) := \frac{1}{2} \brac{X_\alpha^i(u(x)) + X_\alpha^i(u(y))} d_s u^i (x,y).
\end{equation}
Consequently,
\[
\int_{\R^n} |d_su|^{p-2}d_su^j\cdot d_s \varphi = \int_{\R^n} \int_{\R^n}\varphi(x)\, |d_s u(x,y)|^{p-2} \ \Omega_\alpha (x,y)\ d_sY_\alpha^j(u)(x,y)\ \frac{dx\ dy}{|x-y|^n}
+ \int_{\R^n} R\ \varphi.
\]
This proves Lemma~\ref{la:homoEL} up to showing that $|d_s u|^{p-2} \ \Omega_\alpha$ is divergence free. The latter is contained in the following lemma.
\end{proof}

\begin{lemma}
Let $u$, $\mathcal{N}$, $\Omega_\alpha$ be as above. Then
\[
 \div_s \brac{|d_s u|^{p-2} \ \Omega_\alpha} = 0.
\]
\end{lemma}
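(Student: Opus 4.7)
The plan is to test the exact Euler--Lagrange equation \eqref{eq:EL} against the tangential vector field $V^i(x) := X_\alpha^i(u(x))\,\psi(x)$ for an arbitrary $\psi \in C_c^\infty(\R^n)$. Since $X_\alpha(u) \in T_u \mathcal{N}$ pointwise, we have $\Pi_{ij}(u)\,X_\alpha^j(u) = X_\alpha^i(u)$, so taking $\varphi = X_\alpha^j(u)\psi$ in the $j$-th instance of \eqref{eq:EL} and summing in $j$ yields
\[
 \int_{\R^n} |d_s u|^{p-2}\, d_s u^i \cdot d_s\bigl(X_\alpha^i(u)\,\psi\bigr)\, dx = 0.
\]
This is the only place where $W^{s,p}$-harmonicity enters; note that no error term arises because I invoke \eqref{eq:EL} directly, not the reformulation in Lemma~\ref{la:ELeq}.

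Next, I would expand $d_s\bigl(X_\alpha^i(u)\psi\bigr)(x,y)$ via the symmetric product rule
\[
 d_s(fg)(x,y) = \tfrac{1}{2}\bigl(f(x)+f(y)\bigr)\, d_s g(x,y) + \tfrac{1}{2}\bigl(g(x)+g(y)\bigr)\, d_s f(x,y),
\]
applied with $f = X_\alpha^i(u)$ and $g = \psi$. Multiplying by $|d_s u|^{p-2}\, d_s u^i(x,y)$ and integrating against $\frac{dx\,dy}{|x-y|^n}$, the first summand produces exactly
\[
 \int_{\R^n}\int_{\R^n} |d_s u|^{p-2}\, \Omega_\alpha(x,y)\, d_s\psi(x,y)\, \frac{dx\,dy}{|x-y|^n}
\]
by the definition \eqref{eq:defomega} of $\Omega_\alpha$. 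The second summand carries the factor $d_s u^i(x,y)\, d_s X_\alpha^i(u)(x,y)$, which vanishes pointwise in $(x,y)$ by the nonlocal Killing identity \eqref{eq:kill1}.

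Combining these two observations gives
\[
 \int_{\R^n}\int_{\R^n} |d_s u|^{p-2}\, \Omega_\alpha(x,y)\, d_s\psi(x,y)\, \frac{dx\,dy}{|x-y|^n} = 0 \qquad \text{for every } \psi \in C_c^\infty(\R^n),
\]
which is precisely the distributional identity $\div_s\bigl(|d_s u|^{p-2}\,\Omega_\alpha\bigr) = 0$. The only mild technical point is verifying that $X_\alpha(u)\,\psi$ is genuinely admissible as a tangential test field in \eqref{eq:EL}; I do not expect this to be a real obstacle, since $\psi \in C_c^\infty(\R^n)$, $u$ takes values in the compact manifold $\mathcal{N}$, and $X_\alpha$ is smooth on $\mathcal{N}$, so a standard chain-rule estimate puts $X_\alpha(u)\,\psi$ in $W^{s,p}(\R^n) \cap L^\infty(\R^n)$, which is the natural class for testing.
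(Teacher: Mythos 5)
Your proposal is correct and follows essentially the same route as the paper: test \eqref{eq:EL} with the tangential field $X_\alpha^j(u)\psi$ (using $\Pi_{ij}(u)X_\alpha^j(u)=X_\alpha^i(u)$), then use the Killing identity \eqref{eq:kill1} to reduce $d_s u^i\, d_s(X_\alpha^i(u)\psi)$ to $\Omega_\alpha\, d_s\psi$. The only difference is cosmetic: you make the symmetric product rule for $d_s$ explicit where the paper states the resulting identity directly, and you flag the admissibility of $X_\alpha(u)\psi$ as a test field, which the paper passes over silently.
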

\begin{proof}
For any test-function $\psi \in C_c^\infty(\R^n)$ setting $\varphi^j := X^j_\alpha(u)\psi$ we have $\varphi^i \equiv \Pi_{ji}(u) \varphi^j$, because $X_\alpha(u)\psi$ is a tangent field. 
From equation \eqref{eq:EL} we then have
\begin{equation}\label{eq:EL22}
 \int_{\R^n} \brac{|d_s u|^{p-2} d_su^i} \cdot d_s \varphi^i(x)\ dx = 0.
\end{equation}
Now, with \eqref{eq:kill1} pointwise almost everywhere,
\[
 \Omega_\alpha(x,y)\, d_s \psi(x,y) = d_s u^i(x,y)\ d_s \brac{X_\alpha^i(u)\ \psi}(x,y) = d_s u^i(x,y)\ d_s \varphi^i(x,y).
\]
Thus, with \eqref{eq:EL22},
\[
 \int_{\R^n} |d_s u|^{p-2} \Omega_\alpha\cdot d_s \psi(x)\ dx = \int_{\R^n} \brac{|d_s u|^{p-2} d_su^i} \cdot d_s \varphi^i(x)\ dx = 0.
\]
This completes the proof. 
\end{proof}

\section{An integro-differential Triebel--Lizorkin type space}\label{s:triebel}
The Sobolev space $W^{s,2}(\R^n)$, $s \in (0,1)$, $p \in (1,\infty)$ is equivalent to the Triebel--Lizorkin spaces $F^s_{p,2}(\R^n)$, see \cite{GrafakosCF,GrafakosMF,Runst-Sickel-1996}. More precisely,
\begin{equation}\label{eq:triebelFsp2}
 \|f\|_{\dot{F}^s_{p,2}} \aeq \|\laps{s} f\|_{L^p(\R^n)}.
\end{equation}
More generally, the Sobolev space $W^{s,p}(\R^n)$, $s \in (0,1)$, $p \in (1,\infty)$ is equivalent to the Triebel--Lizorkin space $F^s_{p,p}(\R^n)$, and we have
\[
 \|f\|_{\dot{F}^s_{p,p}} \aeq \brac{\int_{\R^n} \int_{\R^n} \frac{|f(x)-f(y)|^p}{|x-y|^{n+sp}}\ dy\ dx}^{\frac{1}{p}}.
\]
For $p,q \in (1,\infty)$, $s \in (0,1)$ we introduce the space $\dot{X}^{s}_{p,q}$ induced by the seminorm
\[
\|f\|_{\dot{X}^s_{p,q}} := 
\brac{\int_{\R^n} \brac{\int_{\R^n} \frac{|f(x)-f(y)|^q}{|x-y|^{n+sq}}\ dy}^{\frac{p}{q}}\ dx}^{\frac{1}{p}}.
\]
We have the following embedding
\begin{proposition}\label{pr:triebelembedding}
For any $s \in (0,1)$, $p,q \in (1,\infty)$, we have for the homogeneous Triebel--Lizorkin space $\dot{F}^s_{p,\infty}$
\[
 \|f\|_{\dot{F}^s_{p,\infty}} \aleq \brac{\int_{\R^n} \brac{\int_{\R^n} \frac{|f(x)-f(y)|^q}{|x-y|^{n+sq}}\ dy}^{\frac{p}{q}}\ dx}^{\frac{1}{p}}.
\]
\end{proposition}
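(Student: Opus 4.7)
The plan is to combine two ingredients: a classical oscillation-type characterization of $\dot{F}^s_{p,\infty}(\R^n)$, together with the pointwise estimate already recorded in Lemma~\ref{la:maximalest}. I do not expect any substantial new difficulty beyond correctly stitching these pieces.

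First, I will invoke the well-known fact (see, e.g., Triebel's monographs on function spaces, or the works of Dorronsoro and Strichartz on mean-oscillation characterizations) that for $s \in (0,1)$ and $p \in (1,\infty)$,
\[
\|f\|_{\dot{F}^s_{p,\infty}(\R^n)} \aleq \bigg\| \sup_{t>0}\, t^{-s-n} \int_{B(\cdot,t)} |f(\cdot)-f(y)|\, dy \bigg\|_{L^p(\R^n)}.
\]
This reduces the proposition to controlling the quantity on the right by $\|f\|_{\dot{X}^s_{p,q}}$.

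Next, the very same Jensen/H\"older-type computation used in the proof of Lemma~\ref{la:maximalest} yields, for any $q \in [1,\infty)$ and any $t>0$,
\[
t^{-s-n} \int_{B(x,t)} |f(x)-f(y)|\, dy \aleq \brac{\int_{B(x,t)} \frac{|f(x)-f(y)|^q}{|x-y|^{n+sq}}\, dy}^{\frac{1}{q}} \leq \brac{\int_{\R^n} \frac{|f(x)-f(y)|^q}{|x-y|^{n+sq}}\, dy}^{\frac{1}{q}}.
\]
The final bound is independent of $t$, so the supremum over $t>0$ on the left may be absorbed trivially. Taking $L^p(\R^n)$ norms in $x$ and combining with the characterization above yields the claim.

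The only real obstacle is producing a clean reference for the oscillation-type characterization of $\dot{F}^s_{p,\infty}$ in exactly the form stated above. Since $q=\infty$ on the Triebel--Lizorkin side removes any sum-in-$t$ subtleties, no delicate Littlewood--Paley calculation is needed here beyond the pointwise reduction already carried out in Lemma~\ref{la:maximalest}.
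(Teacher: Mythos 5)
Your proposal is correct, but it reaches the conclusion by a genuinely different route than the paper. The paper's proof is a two-liner: it cites the pointwise estimate $2^{js}|\lap_j f(x)| \aleq \brac{\int_{\R^n} |f(x)-f(y)|^q\,|x-y|^{-n-sq}\,dy}^{1/q}$ for the Littlewood--Paley pieces (from the arXiv version of \cite{Schikorra-CPDE}, Proposition~8.6) and then simply takes the supremum over $j$ and the $L^p$-norm, using the Littlewood--Paley definition of $\dot{F}^s_{p,\infty}$. You instead replace that pointwise Littlewood--Paley bound by the classical mean-oscillation characterization of $\dot{F}^s_{p,\infty}$ for $s\in(0,1)$, $p\in(1,\infty)$ (Dorronsoro, DeVore--Sharpley, Strichartz, Seeger), and then run the Jensen/H\"older computation that is already written out in Lemma~\ref{la:maximalest}; note that you only need the one inequality $\|f\|_{\dot F^s_{p,\infty}} \aleq \|\sup_{t>0} t^{-s-n}\int_{B(\cdot,t)}|f(\cdot)-f(y)|\,dy\|_{L^p}$, which is the elementary direction of that characterization (estimate $\lap_j f(x)=\int \psi_j(x-y)(f(y)-f(x))\,dy$ by dyadic annuli). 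Both arguments consist of one external black box plus the same Jensen step; yours trades the paper's somewhat ad hoc reference for a better-documented classical one, at the cost of invoking a full equivalence of function spaces where only a single pointwise inequality is needed. A minor point worth a footnote in either version: $\dot F^s_{p,\infty}$ is a space of distributions modulo constants, and both sides of the claimed inequality are indeed invariant under adding constants, so no issue arises.
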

\begin{remark}
In particular, this proposition shows that $\dot{X}^{s}_{p,q}$ is not the Besov space $\dot{B}^s_{p,q}$, at least for $q>p$. Indeed, otherwise the embedding above would imply $\dot{B}^s_{p,q} \subset \dot{F}^{s}_{p,\infty}$, which is false for $q > p$. 
One could think that $\dot{X}^s_{p,q} = \dot{F}^{s}_{p,q}$, however we were not able to immediately prove (or disprove) this. \end{remark}

\begin{proof}
In the arxiv-version of \cite{Schikorra-CPDE}, in Proposition~8.6, one can find the (easy) proof of
\[
 2^{js} |\lap_j f(x)| \aleq \brac{\int_{\R^n} \frac{|f(x)-f(y)|^q}{|x-y|^{n+sq}}\ dy}^{\frac{1}{q}}.
\]
Here, $\lap_j$ is the $j$-th Littlewood--Paley projection operator, see \cite{GrafakosCF,GrafakosMF}. Thus,
\[
 \|f\|_{\dot{F}^s_{p,\infty}} \equiv \brac{\int_{\R^n} \brac{\sup_{j \in \Z} 2^{js} |\lap_jf(x)|}^p\ dx}^{\frac{1}{p}} \aleq 
 \brac{\int_{\R^n}\brac{\int_{\R^n} \frac{|f(x)-f(y)|^q}{|x-y|^{n+sq}}\ dy}^{\frac{p}{q}}\ dx}^{\frac{1}{p}}.
\]
\end{proof}
For the Triebel--Lizorkin spaces $\dot{F}^s_{p,\infty}$ we have a Sobolev embedding (see, e.g., \cite[Theorem~2.7.1 (ii)]{Triebel1983}) \[\dot{F}^{s_1}_{p_1,q_1}(\R^n) \hookrightarrow  \dot{F}^{s_2}_{p_2,q_2}(\R^n)\] holds whenever $s_1 > s_2$ and for any $q_1,q_2 \in [1,\infty]$ if $p_1, p_2 \in (1,\infty)$ are such that \[s_1 - \frac{n}{p_1} = s_2 - \frac{n}{p_2}.\] From this, \eqref{eq:triebelFsp2}, and Proposition~\ref{pr:triebelembedding} we have in particular the following Sobolev-type inequality, cf. \cite[Theorem 1.6]{Schikorra-CPDE}.
\begin{proposition}\label{pr:sobolevin}
Let $0 \leq s_2 < s_1 < 1$ and $p_1,p_2,q_1\in (1,\infty)$ so that
\[
 s_1 - \frac{n}{p_1} = s_2 - \frac{n}{p_2}.
\]
Then
 \begin{equation}\label{eq:sobolev}
  \|\laps{s_2} f \|_{L^{p_2}(\R^n)} \aleq \brac{\int_{\R^n}\brac{\int_{\R^n} \frac{|f(x)-f(y)|^{q_1}}{|x-y|^{n+s_1 q_1}}\, dx}^{\frac{p_1}{q_1}}\, dy}^{\frac{1}{p_1}}.
  \end{equation}
\end{proposition}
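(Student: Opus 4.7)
The plan is to realize the right-hand side of \eqref{eq:sobolev} as an upper bound for a homogeneous Triebel--Lizorkin seminorm with third index $\infty$, then run a Triebel--Lizorkin Sobolev embedding at the critical scaling, and finally translate back via the Bessel-potential identification \eqref{eq:triebelFsp2}. The three inputs are all either established or quoted in the excerpt, so the task reduces to correctly chaining them together.

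Concretely, first I would apply Proposition~\ref{pr:triebelembedding} with parameters $(s,p,q)=(s_1,p_1,q_1)$ to obtain
\[
 \|f\|_{\dot F^{s_1}_{p_1,\infty}(\R^n)} \aleq \brac{\int_{\R^n}\brac{\int_{\R^n} \frac{|f(x)-f(y)|^{q_1}}{|x-y|^{n+s_1 q_1}}\, dx}^{\frac{p_1}{q_1}}\, dy}^{\frac{1}{p_1}}.
\]
Next I would invoke the Triebel--Lizorkin Sobolev embedding recalled immediately before the statement: since the third index is unrestricted in that embedding, the scaling $s_1-n/p_1=s_2-n/p_2$ together with $s_1>s_2$ gives $\dot F^{s_1}_{p_1,\infty}(\R^n)\hookrightarrow \dot F^{s_2}_{p_2,2}(\R^n)$. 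Finally, \eqref{eq:triebelFsp2} identifies $\|f\|_{\dot F^{s_2}_{p_2,2}}\aeq \|\laps{s_2} f\|_{L^{p_2}(\R^n)}$, and concatenating these three estimates produces \eqref{eq:sobolev}.

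No step requires significant new work: Proposition~\ref{pr:triebelembedding} has already been proved, the Sobolev embedding is a standard fact from the theory of homogeneous Triebel--Lizorkin spaces, and the Bessel-potential characterization is the quoted one. The only mild subtlety I anticipate is the endpoint $s_2=0$, where \eqref{eq:triebelFsp2} must be read in the form $\dot F^{0}_{p_2,2}\aeq L^{p_2}$; this is the classical Littlewood--Paley square function characterization of $L^{p_2}$, valid for $p_2\in(1,\infty)$, which is exactly our range. Hence I do not expect a genuine obstacle beyond bookkeeping of indices.
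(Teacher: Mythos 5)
Your proposal is correct and follows exactly the same route as the paper: Proposition~\ref{pr:triebelembedding} bounds the $\dot F^{s_1}_{p_1,\infty}$ seminorm by the right-hand side, the quoted Triebel--Lizorkin Sobolev embedding (which permits arbitrary third indices) gives $\dot F^{s_1}_{p_1,\infty}\hookrightarrow \dot F^{s_2}_{p_2,2}$, and \eqref{eq:triebelFsp2} converts the result to $\|\laps{s_2}f\|_{L^{p_2}}$. Your remark on reading $\dot F^0_{p_2,2}\aeq L^{p_2}$ at the endpoint $s_2=0$ is a correct and welcome clarification.
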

From Proposition~\ref{pr:sobolevin} we also obtain the following localized version.
\begin{lemma}\label{la:locsobolev}
Let $s \in (0,1)$ and $p,p_2 \in (1,\infty)$. For any $q \in (1,p_2)$, if
\begin{equation}\label{eq:lsp2def}
 s - \frac{n}{p} = -\frac{n}{p_2}
\end{equation}
then there is some $\Lambda > 0$ so that for any ball $B(x_0,\rho)$ we have
\[
  \|f - (f)_{B(x_0,\rho)}\|_{L^{p_2}(B(x_0,\rho))} \leq C \brac{\int_{B(x_0,\Lambda \rho)}\brac{\int_{B(x_0,\Lambda \rho)} \frac{|f(x)-f(y)|^{q}}{|x-y|^{n+s q}}\, dx}^{\frac{p}{q}}\, dy}^{\frac{1}{p}}.
\]
Here $(f)_{B(x_0,\rho)}$ denotes the mean value. The constant $C$ depends only on $s,p,p_2,q$ and the dimension.
\end{lemma}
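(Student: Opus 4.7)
My plan is to localize the global Sobolev-type estimate of Proposition~\ref{pr:sobolevin} via a cutoff argument. By translation and rescaling---the two sides of the proposed inequality are homogeneous of the same order under $x \mapsto \rho x$ thanks to the relation $s - n/p = -n/p_2$---it suffices to treat $x_0 = 0$ and $\rho = 1$. Write $B := B(0,1)$ and $\Lambda B := B(0,\Lambda)$ for a constant $\Lambda > 2$ to be chosen, pick a smooth cutoff $\eta \in C_c^\infty(B(0,\Lambda/2))$ with $\eta \equiv 1$ on $B$ and $|\nabla\eta| \aleq \Lambda^{-1}$, set $c := (f)_{B(0,\Lambda/2)}$, and let $\tilde f := \eta(f - c)$. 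A short triangle-inequality/H\"older argument yields $\|f - (f)_B\|_{L^{p_2}(B)} \aleq \|\tilde f\|_{L^{p_2}(\R^n)}$, and since $\tilde f$ is compactly supported, Proposition~\ref{pr:sobolevin} with $(s_1,s_2,p_1,q_1) = (s,0,p,q)$ gives
\[
\|\tilde f\|_{L^{p_2}(\R^n)} \aleq \brac{\int_{\R^n}\brac{\int_{\R^n}\frac{|\tilde f(x) - \tilde f(y)|^q}{|x-y|^{n+sq}}\,dx}^{p/q}dy}^{1/p}.
\]

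I would then use the product rule
\[
|\tilde f(x) - \tilde f(y)| \le \eta(x)|f(x) - f(y)| + |f(y) - c|\,|\eta(x) - \eta(y)|
\]
together with $\supp\eta \subset B(0,\Lambda/2)$ to split the outer integral into three contributions: a \emph{local} piece, which after enlarging the inner integration domain from $B(0,\Lambda/2)$ to $\Lambda B$ is precisely the right-hand side of the lemma; a \emph{commutator} piece arising from $|\eta(x) - \eta(y)|$; and a \emph{tail} piece from $y \notin \Lambda B$, where $\tilde f(y) = 0$. The commutator piece is controlled via the elementary bound $\int_{\R^n}|\eta(x) - \eta(y)|^q|x-y|^{-n-sq}\,dx \aleq \Lambda^{-sq}$, which reduces it to $\Lambda^{-sp}\|f - c\|_{L^p(\Lambda B)}^p$; the classical fractional Poincar\'e--Sobolev inequality on $\Lambda B$ dominates this by $\Lambda^{-sp}[f]_{W^{s,p}(\Lambda B)}^p$, and a comparison between $[f]_{W^{s,p}(\Lambda B)}$ and the local $\dot X^s_{p,q}$-seminorm (by Jensen when $q \le p$, by a H\"older interpolation exploiting $q < p_2$ when $q > p$) then absorbs this piece into the local one.

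The main technical obstacle is the tail. Using $|x-y|\ge |y|/2$ for $x \in \supp\tilde f$ and $|y|\ge \Lambda$, and then H\"older's inequality $\|\tilde f\|_{L^q(\Lambda B)} \aleq |\Lambda B|^{1/q - 1/p_2}\|\tilde f\|_{L^{p_2}(\R^n)}$, a direct calculation shows that the tail is dominated by a \emph{universal} constant multiple of $\|\tilde f\|_{L^{p_2}(\R^n)}^p$; the exponent identity $sp + np/p_2 = n$ renders this estimate scale-critical in $\Lambda$, so simply taking $\Lambda$ large does not by itself create the smallness needed for absorption on the left. The hard part is therefore to sharpen the tail estimate---for instance by a dyadic decomposition of $\R^n \setminus \Lambda B$ combined with an interpolation exploiting the strict hypothesis $q < p_2$---so as to extract an explicit decay factor $\Lambda^{-\delta}$ with $\delta = \delta(s,p,p_2,q,n) > 0$. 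Once this is done, choosing $\Lambda$ sufficiently large permits the absorption and yields the lemma with $C$ and $\Lambda$ depending only on $s,p,p_2,q,$ and $n$.
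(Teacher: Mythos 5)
Your overall strategy (cutoff, apply the global embedding of Proposition~\ref{pr:sobolevin}, split into local/commutator/tail pieces, absorb) is the same as the paper's, but the tail step that you yourself flag as "the hard part" is a genuine gap, and the remedy you sketch cannot close it. The scale-criticality you observe is not an artifact of a lossy estimate that a dyadic decomposition could sharpen: with a cutoff supported in $B(0,\Lambda/2)$, a function $\tilde f$ with $|\tilde f|\approx 1$ spread over $B(0,\Lambda/2)$ makes the tail genuinely comparable to $\|\tilde f\|_{L^{p_2}(\R^n)}^p$ uniformly in $\Lambda$ (both sides scale like $\Lambda^{np/p_2}=\Lambda^{n-sp}$), so no factor $\Lambda^{-\delta}$ exists to be extracted. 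The source of the problem is your choice to let the support of $\eta$ grow with $\Lambda$: the H\"older step $\|\tilde f\|_{L^q(\supp\eta)}\aleq |\supp\eta|^{1/q-1/p_2}\|\tilde f\|_{L^{p_2}}$ then costs exactly the factor $\Lambda^{n(1/q-1/p_2)}$ that cancels the decay $\Lambda^{-(n/q-n/p_2)}$ coming from integrating $|y|^{-(n+sq)p/q}$ over $\R^n\setminus B(0,\Lambda)$. The paper's proof fixes this by keeping the cutoff supported in the \emph{fixed} ball $B(0,2)$ (only the domain $B(0,\Lambda)$ of the seminorm on the right grows): then $\|u\|_{L^q(B(0,2))}\aleq\|u\|_{L^{p_2}(\R^n)}$ with a $\Lambda$-independent constant, and the tail is $\aleq\Lambda^{-\sigma}\|u\|_{L^{p_2}(\R^n)}$ with $\sigma=\frac nq+s-\frac np=\frac nq-\frac n{p_2}>0$, which is exactly where the hypothesis $q<p_2$ enters. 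With that single change your argument goes through.

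A secondary soft spot is the commutator piece. You route it through $\Lambda^{-s}\|f-c\|_{L^p(\Lambda B)}$, then a fractional Poincar\'e inequality giving $[f]_{W^{s,p}(\Lambda B)}$, and finally a claimed comparison of $[f]_{W^{s,p}(\Lambda B)}$ with the local $\dot X^s_{p,q}$-seminorm. For $q\le p$ this is fine by Jensen, but for $q>p$ the natural H\"older attempt produces a logarithmically divergent integral $\int_{\Lambda B}|x-y|^{-n}\,dx$, and indeed $\dot X^s_{p,q}$ only embeds into $\dot F^s_{p,\infty}$ (Proposition~\ref{pr:triebelembedding}), not into $W^{s,p}=\dot F^s_{p,p}$; your "H\"older interpolation exploiting $q<p_2$" is not substantiated. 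The paper avoids the detour through $W^{s,p}$ entirely: it bounds $|f(y)-(f)_{B(0,1)}|\le\mvint_{B(0,1)}|f(y)-f(x)|\,dx\aleq\big(\int_{B(0,1)}\frac{|f(x)-f(y)|^q}{|x-y|^{n+sq}}\,dx\big)^{1/q}$ pointwise by Jensen (the inner ball is fixed, so $|x-y|$ is bounded), which lands directly in the $\dot X^s_{p,q}$-seminorm for every $q$. You should adopt this step as well.
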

\begin{proof}
By translation invariance and scaling we may assume that $\rho = 1$ and $x_0 = 0$. We denote balls centered at the origin by $B(r) := B(0,r)$, for $r > 0$.

Let $\eta \in C_c^\infty(B(2))$ be a typical cutoff function, $\eta\equiv1$ on $B(1)$, $\|\nabla\eta \|_{L^\infty} \aleq 1$. We apply Proposition~\ref{pr:sobolevin} to \[u(x) := \eta(x)(f(x)-(f)_{B(1)}).\] Then,
 \[
  \begin{split}
   \|u\|_{L^{p_2}({\R^n})} &\aleq \brac{\int_{\R^n}\brac{\int_{\R^n} \frac{|\eta(x)(f(x)-(f)_{B(1)})-\eta(y)(f(y)-(f)_{B(1)})|^q}{|x-y|^{n+sq}}\, dx}^{\frac{p}{q}}\, dy}^{\frac{1}{p}}\\
  &\aleq I + II + III,
  \end{split}
 \]
where
\[
    I := \brac{\int_{B(\Lambda )}\brac{\int_{B(\Lambda )} \frac{|\eta(x)(f(x)-(f)_{B(1)})-\eta(y)(f(y)-(f)_{B(1)})|^q}{|x-y|^{n+sq}}\, dx}^{\frac{p}{q}}\, dy}^{\frac{1}{p}},
\]
\[
    II := \brac{\int_{B(\Lambda )}\brac{\int_{\R^n\setminus B(\Lambda )} \frac{|\eta(y)(f(y)-(f)_{B(1)})|^q}{|x-y|^{n+sq}}\, dx}^{\frac{p}{q}}\, dy}^{\frac{1}{p}},
\]
and
\[   
   III:= \brac{\int_{\R^n\setminus B(\Lambda )}\brac{\int_{B(2)} \frac{|\eta(x)(f(x)-(f)_{B(1)})|^q}{|x-y|^{n+sq}}\, dx}^{\frac{p}{q}}\, dy}^{\frac{1}{p}}.
\]
As for $I$ we estimate
\[
 \begin{split}
  I&\aleq 
   \brac{\int_{B(\Lambda )}\brac{\int_{B(\Lambda )} \frac{|(\eta(x)-\eta(y))(f(x)-f(y))|^q}{|x-y|^{n+sq}}\, dx}^{\frac{p}{q}}\, dy}^{\frac{1}{p}}\\
&\quad+  \brac{\int_{B(\Lambda )}\brac{\int_{B(\Lambda )} \frac{|\eta(y)(f(x)-f(y))|^q}{|x-y|^{n+sq}}\, dx}^{\frac{p}{q}}\, dy}^{\frac{1}{p}}\\
 &\quad + \brac{\int_{B(\Lambda )}\brac{\int_{B(\Lambda )} \frac{|(\eta(x)-\eta(y))(f(y)-(f)_{B(1)})|^q}{|x-y|^{n+sq}}\, dx}^{\frac{p}{q}}\, dy}^{\frac{1}{p}}.
\end{split}
\]
Regrouping and using the Lipschitz continuity of $\eta$ we find
\[
\begin{split}
 I &\aleq \brac{\int_{B(\Lambda )}\brac{\int_{B(\Lambda )} \frac{|f(x)-f(y)|^q}{|x-y|^{n+sq}}\, dx}^{\frac{p}{q}}\, dy}^{\frac{1}{p}}\\
  &\quad + \brac{\int_{B(\Lambda )}\brac{\int_{B(\Lambda )} \frac{|f(y)-(f)_{B(1)}|^q}{|x-y|^{n+(s-1)q}}\, dx}^{\frac{p}{q}}\, dy}^{\frac{1}{p}}.
 \end{split}
\]
Since $s<1$ we can integrate in $x$ and have
\[
 \begin{split}
 &\brac{\int_{B(\Lambda )}\brac{\int_{B(\Lambda )} \frac{|f(y)-(f)_{B(1)}|^q}{|x-y|^{n+(s-1)q}}\, dx}^{\frac{p}{q}}\, dy}^{\frac{1}{p}}\\
 &\aleq \brac{\int_{B(\Lambda )}|f(y)-(f)_{B(1)}|^{p}\, dy}^{\frac{1}{p}}\\ 
 &\aleq \brac{\int_{B(\Lambda )}\brac{\int_{B(1)} |f(y)-f(x)|\,dx }^{p}\,dy}^{\frac{1}{p}}\\
 &\aleq \brac{\int_{B(\Lambda )}\brac{\int_{B(1)} \frac{|f(y)-f(x)|^q}{|x-y|^{n+sq}}\,dx }^{\frac{p}{q}}\,dy}^{\frac{1}{p}}.
 \end{split}
\]
That is, 
\[
 I \aleq  \brac{\int_{B(\Lambda )}\brac{\int_{B(\Lambda )} \frac{|f(x)-f(y)|^q}{|x-y|^{n+sq}}\, dx}^{\frac{p}{q}}\, dy}^{\frac{1}{p}}.
\]
As for $II$, we integrate in $x$ and have
 \[
   II \aleq \brac{\int_{B(2)}|f(y)-(f)_{B(1)}|^{p}\, dy}^{\frac{1}{p}}
  \aleq \brac{\int_{B(2)}\brac{\int_{B(1)} \frac{|f(x)-f(y)|^q}{|x-y|^{n+sq}}\ dx}^\frac{p}{q}\ dy}^\frac{1}{p}.
\]
Finally we estimate $III$. For $\Lambda > 4$, for any $x\in {B(2)}$ and any $y\in \R^n\setminus B(\Lambda)$ we have  $|x-y|\ge \frac{1}{2}|y|$ and we get
\[
III \aleq \brac{\int_{\R^n\setminus B(\Lambda )}|y|^{-(n+sq)\frac{p}{q}}\brac{\int_{{B(2)}} |{\eta(x)}(f(x)-(f)_{{B(1)}})|^q\, dx}^{\frac{p}{q}}\, dy}^{\frac{1}{p}}.
\]
Now, since $q \in (1,p_2)$ we have $-(n+sq)\frac{p}{q} < -n$. Hence,
\[
 \brac{\int_{\R^n\setminus B(\Lambda )}|y|^{-(n+sq)\frac{p}{q}}\ dy}^{\frac{1}{p}} \aleq \Lambda^{-\sigma},
\]
for $\sigma := \frac{n}{q}+-\frac{n}{p}+s >0$. Thus, by the definition of $u$,
\[
\begin{split}
III &\aleq \Lambda^{-\sigma}\brac{\int_{{B(2)}}|{\eta(x)}(f(x)-(f)_{{B(1)}})|^q\, dx}^{\frac1q}\\
&\aleq \Lambda^{-\sigma}\, \norm{u}_{L^q(B(2))}\aleq \Lambda^{-\sigma}\, \norm{u}_{L^{p_2}(\R^n)}.
\end{split}
\]
In the last inequality we used the fact that $q < p_2$ and $u$ is supported in $B(2)$.
We have thus shown, for any $\Lambda > 4$,
\[
\|u\|_{L^{p_2}({\R^n})} \aleq   \Lambda^{-\sigma} \norm{u}_{L^{p_2}(\R^n)} + \brac{\int_{B(\Lambda)}\brac{\int_{B(\Lambda)} \frac{|f(x)-f(y)|^q}{|x-y|^{n+sq}}\ dx}^\frac{p}{q}\ dy}^\frac{1}{p}.
\]
For $\Lambda$ sufficiently large we can absorb $\Lambda^{-\sigma} \norm{u}_{L^{p_2}(\R^n)}$ into the left-hand side. This finishes the proof of Lemma~\ref{la:locsobolev}.
\end{proof}
\subsection*{Acknowledgment}
We would like to thank M. Hinz for pointing out literature regarding Dirichlet forms and the subsequent gradients and divergence.

Both authors are supported by the German Research Foundation (DFG) through grant no.~SCHI-1257-3-1. A.S. receives funding from the Daimler and Benz foundation. A.S. is Heisenberg fellow.  
 \bibliographystyle{plain}%
\bibliography{bib}%

\begin{thebibliography}{10}

\bibitem{Adams-1975}
D.~R. Adams.
\newblock A note on {R}iesz potentials.
\newblock {\em Duke Math. J.}, 42(4):765--778, 1975.

\bibitem{Bethuel-1992}
F.~Bethuel.
\newblock Un r\'esultat de r\'egularit\'e pour les solutions de l'\'equation de
  surfaces \`a courbure moyenne prescrite.
\newblock {\em C. R. Acad. Sci. Paris S\'er. I Math.}, 314(13):1003--1007,
  1992.

\bibitem{Blatt-Reiter-Schikorra-2016}
S.~Blatt, Ph. Reiter, and A.~Schikorra.
\newblock Harmonic analysis meets critical knots. {C}ritical points of the
  {M}\"obius energy are smooth.
\newblock {\em Trans. Amer. Math. Soc.}, 368(9):6391--6438, 2016.

\bibitem{Bojarski-Hajlasz-1993}
B.~Bojarski and P.~Haj{\l}asz.
\newblock Pointwise inequalities for {S}obolev functions and some applications.
\newblock {\em Studia Math.}, 106(1):77--92, 1993.

\bibitem{Brasco-Lindgren-2017}
L.~Brasco and E.~Lindgren.
\newblock Higher {S}obolev regularity for the fractional {$p$}-{L}aplace
  equation in the superquadratic case.
\newblock {\em Adv. Math.}, 304:300--354, 2017.

\bibitem{BrC84}
H.~Brezis and J.-M. Coron.
\newblock Multiple solutions of {$H$}-systems and {R}ellich's conjecture.
\newblock {\em Comm. Pure Appl. Math.}, 37(2):\ 149--187, 1984.

\bibitem{Briane-Casado-Diaz-Murat-2009}
M.~Briane, J.~Casado-D{\'{\i}}az, and F.~Murat.
\newblock The div-curl lemma ``trente ans apr\`es'': an extension and an
  application to the {$G$}-convergence of unbounded monotone operators.
\newblock {\em J. Math. Pures Appl. (9)}, 91(5):476--494, 2009.

\bibitem{CLMS-1993}
R.~Coifman, P.-L. Lions, Y.~Meyer, and S.~Semmes.
\newblock Compensated compactness and {H}ardy spaces.
\newblock {\em J. Math. Pures Appl. (9)}, 72(3):247--286, 1993.

\bibitem{Conti-Dolzmann-Mueller}
S.~Conti, G.~Dolzmann, and S.~M{\"u}ller.
\newblock The div-curl lemma for sequences whose divergence and curl are
  compact in {$W^{-1,1}$}.
\newblock {\em C. R. Math. Acad. Sci. Paris}, 349(3-4):175--178, 2011.

\bibitem{DaLio-2013}
F.~Da~Lio.
\newblock Fractional harmonic maps into manifolds in odd dimension {$n>1$}.
\newblock {\em Calc. Var. Partial Differential Equations}, 48(3-4):421--445,
  2013.

\bibitem{DaLio-Riviere-1Dmfd}
F.~Da~Lio and T.~Rivi{\`e}re.
\newblock Sub-criticality of non-local {S}chr\"odinger systems with
  antisymmetric potentials and applications to half-harmonic maps.
\newblock {\em Adv. Math.}, 227(3):1300--1348, 2011.

\bibitem{DaLio-Riviere-1Dsphere}
F.~Da~Lio and T.~Rivi{\`e}re.
\newblock Three-term commutator estimates and the regularity of
  {$\frac12$}-harmonic maps into spheres.
\newblock {\em Anal. PDE}, 4(1):149--190, 2011.

\bibitem{DaLio-Riviere-CAG}
F.~Da~Lio and T.~Rivi{\`e}re.
\newblock Horizontal $\alpha$-harmonic maps.
\newblock {\em Preprint, arXiv:1604.05461}, 2016.

\bibitem{DiCastro-Kuusi-Palatucci-2016}
A.~Di~Castro, T.~Kuusi, and G.~Palatucci.
\newblock Local behavior of fractional {$p$}-minimizers.
\newblock {\em Ann. Inst. H. Poincar\'e Anal. Non Lin\'eaire},
  33(5):1279--1299, 2016.

\bibitem{Freire}
A.~Freire.
\newblock Global weak solutions of the wave map system to compact homogeneous
  spaces.
\newblock {\em Manuscripta Math.}, 91(4):525--533, 1996.

\bibitem{Fuchs-1993}
M.~Fuchs.
\newblock The blow-up of {$p$}-harmonic maps.
\newblock {\em Manuscripta Math.}, 81(1-2):89--94, 1993.

\bibitem{GrafakosMF}
L.~Grafakos.
\newblock {\em Modern {F}ourier analysis}, volume 250 of {\em Graduate Texts in
  Mathematics}.
\newblock Springer, New York, second edition, 2009.

\bibitem{GrafakosCF}
L.~Grafakos.
\newblock {\em Classical {F}ourier analysis}, volume 249 of {\em Graduate Texts
  in Mathematics}.
\newblock Springer, New York, third edition, 2014.

\bibitem{Hajlasz-1996}
P.~Haj{\l}asz.
\newblock Sobolev spaces on an arbitrary metric space.
\newblock {\em Potential Anal.}, 5(4):403--415, 1996.

\bibitem{Helein90}
F.~H{\'{e}}lein.
\newblock {R\'{e}gularit\'{e} des applications faiblement harmoniques entre une
  surface et une sph\`{e}re}.
\newblock {\em C.R. Acad. Sci. Paris 311, S\'{e}rie I}, pages 519--524, 1990.

\bibitem{Helein91}
F.~H{\'{e}}lein.
\newblock {R\'{e}gularit\'{e} des applications faiblement harmoniques entre une
  surface et une vari\'{e}t\'{e} riemannienne}.
\newblock {\em C.R. Acad. Sci. Paris 312, S\'{e}rie I}, pages 591--596, 1991.

\bibitem{Helein91-sym}
F.~H\'elein.
\newblock Regularity of weakly harmonic maps from a surface into a manifold
  with symmetries.
\newblock {\em Manuscripta Math.}, 70(2):203--218, 1991.

\bibitem{Hinz-2015}
M.~Hinz.
\newblock Magnetic energies and {F}ey-{K}ac-{I}t\^o formulas for symmetric
  {M}arkov processes.
\newblock {\em Stoch. Anal. Appl.}, 33(6):1020--1049, 2015.

\bibitem{Kuusi-Mingione-Sire-2015}
T.~Kuusi, G.~Mingione, and Y.~Sire.
\newblock Nonlocal equations with measure data.
\newblock {\em Comm. Math. Phys.}, 337(3):1317--1368, 2015.

\bibitem{Lenzmann-Schikorra-2016}
E.~{Lenzmann} and A.~{Schikorra}.
\newblock {Sharp commutator estimates via harmonic extensions}.
\newblock {\em ArXiv: 1609.08547}, September 2016.

\bibitem{Millot-Sire-2015}
V.~Millot and Y.~Sire.
\newblock On a fractional {G}inzburg-{L}andau equation and 1/2-harmonic maps
  into spheres.
\newblock {\em Arch. Ration. Mech. Anal.}, 215(1):125--210, 2015.

\bibitem{Murat-1978}
F.~Murat.
\newblock Compacit\'e par compensation.
\newblock {\em Ann. Scuola Norm. Sup. Pisa Cl. Sci. (4)}, 5(3):489--507, 1978.

\bibitem{Murat-1981}
F.~Murat.
\newblock Compacit\'e par compensation: condition n\'ecessaire et suffisante de
  continuit\'e faible sous une hypoth\`ese de rang constant.
\newblock {\em Ann. Scuola Norm. Sup. Pisa Cl. Sci. (4)}, 8(1):69--102, 1981.

\bibitem{Riviere-2007}
T.~Rivi{\`e}re.
\newblock Conservation laws for conformally invariant variational problems.
\newblock {\em Invent. Math.}, 168(1):1--22, 2007.

\bibitem{Riviere-Struwe-2008}
T.~Rivi{\`e}re and M.~Struwe.
\newblock Partial regularity for harmonic maps and related problems.
\newblock {\em Comm. Pure Appl. Math.}, 61(4):451--463, 2008.

\bibitem{Runst-Sickel-1996}
T.~Runst and W.~Sickel.
\newblock {\em Sobolev spaces of fractional order, {N}emytskij operators, and
  nonlinear partial differential equations}, volume~3 of {\em De Gruyter Series
  in Nonlinear Analysis and Applications}.
\newblock Walter de Gruyter \& Co., Berlin, 1996.

\bibitem{Scheven-2006}
Ch. Scheven.
\newblock Partial regularity for stationary harmonic maps at a free boundary.
\newblock {\em Math. Z.}, 253(1):135--157, 2006.

\bibitem{Schikorra-2010}
A.~Schikorra.
\newblock A remark on gauge transformations and the moving frame method.
\newblock {\em Ann. Inst. H. Poincar\'e Anal. Non Lin\'eaire}, 27(2):503--515,
  2010.

\bibitem{Schikorra-CPDE}
A.~Schikorra.
\newblock Integro-differential harmonic maps into spheres.
\newblock {\em Comm. Partial Differential Equations}, 40(3):506--539, 2015.

\bibitem{Schikorra-eps}
A.~Schikorra.
\newblock {$\varepsilon$}-regularity for systems involving non-local,
  antisymmetric operators.
\newblock {\em Calc. Var. Partial Differential Equations}, 54(4):3531--3570,
  2015.

\bibitem{Schikorra-2016}
A.~Schikorra.
\newblock Nonlinear commutators for the fractional {$p$}-{L}aplacian and
  applications.
\newblock {\em Math. Ann.}, 366(1-2):695--720, 2016.

\bibitem{Schikorra-Sire-Wang-2015}
A.~Schikorra, Y.~Sire, and C.~Wang.
\newblock Weak solutions of geometric flows associated to integro-differential
  harmonic maps.
\newblock 2015.

\bibitem{Schikorra-Strzelecki-2016}
A.~{Schikorra} and P.~{Strzelecki}.
\newblock {Invitation to H-systems in higher dimensions: known results, new
  facts, and related open problems}.
\newblock {\em Preprint, arXiv:1606.07891}, 2016.

\bibitem{Semmes-1994}
S.~Semmes.
\newblock A primer on {H}ardy spaces, and some remarks on a theorem of {E}vans
  and {M}\"uller.
\newblock {\em Comm. Partial Differential Equations}, 19(1-2):277--319, 1994.

\bibitem{Shatah-1988}
J.~Shatah.
\newblock Weak solutions and development of singularities of the {${\rm
  SU}(2)$} {$\sigma$}-model.
\newblock {\em Comm. Pure Appl. Math.}, 41(4):459--469, 1988.

\bibitem{Simon-1996}
L.~Simon.
\newblock {\em Theorems on regularity and singularity of energy minimizing
  maps}.
\newblock Lectures in Mathematics ETH Z\"urich. Birkh\"auser Verlag, Basel,
  1996.
\newblock Based on lecture notes by Norbert Hungerb{\"u}hler.

\bibitem{Stein-harmonic}
E.~M. Stein.
\newblock {\em Harmonic analysis: real-variable methods, orthogonality, and
  oscillatory integrals}, volume~43 of {\em Princeton Mathematical Series}.
\newblock Princeton University Press, Princeton, NJ, 1993.
\newblock With the assistance of Timothy S. Murphy, Monographs in Harmonic
  Analysis, III.

\bibitem{Strzelecki-1994}
P.~Strzelecki.
\newblock Regularity of {$p$}-harmonic maps from the {$p$}-dimensional ball
  into a sphere.
\newblock {\em Manuscripta Math.}, 82(3-4):407--415, 1994.

\bibitem{Takeuchi-1994}
H.~Takeuchi.
\newblock Some conformal properties of {$p$}-harmonic maps and a regularity for
  sphere-valued {$p$}-harmonic maps.
\newblock {\em J. Math. Soc. Japan}, 46(2):217--234, 1994.

\bibitem{Tartar-1978}
L.~Tartar.
\newblock Une nouvelle m\'ethode de r\'esolution d'\'equations aux d\'eriv\'ees
  partielles non lin\'eaires.
\newblock In {\em Journ\'ees d'{A}nalyse {N}on {L}in\'eaire ({P}roc. {C}onf.,
  {B}esan\c con, 1977)}, volume 665 of {\em Lecture Notes in Math.}, pages
  228--241. Springer, Berlin, 1978.

\bibitem{Tartar-1979}
L.~Tartar.
\newblock Compensated compactness and applications to partial differential
  equations.
\newblock In {\em Nonlinear analysis and mechanics: {H}eriot-{W}att
  {S}ymposium, {V}ol. {IV}}, volume~39 of {\em Res. Notes in Math.}, pages
  136--212. Pitman, Boston, Mass.-London, 1979.

\bibitem{Tartar-1982}
L.~Tartar.
\newblock The compensated compactness method applied to systems of conservation
  laws.
\newblock In {\em Systems of nonlinear partial differential equations
  ({O}xford, 1982)}, volume 111 of {\em NATO Adv. Sci. Inst. Ser. C Math. Phys.
  Sci.}, pages 263--285. Reidel, Dordrecht, 1983.

\bibitem{Tartar84}
L.~Tartar.
\newblock {Remarks on Oscillations and Stokes' Equation}.
\newblock {\em Lecture Notes in Physics, 230, macroscopic Modelling of
  Turbulent Flows, Proceedings, Sophia-Antipolis, France}, pages 24--31, 1984.

\bibitem{Tartar-2007}
L.~Tartar.
\newblock {\em An introduction to {S}obolev spaces and interpolation spaces},
  volume~3 of {\em Lecture Notes of the Unione Matematica Italiana}.
\newblock Springer, Berlin; UMI, Bologna, 2007.

\bibitem{Toro-Wang}
T.~Toro and C.~Wang.
\newblock Compactness properties of weakly {$p$}-harmonic maps into homogeneous
  spaces.
\newblock {\em Indiana Univ. Math. J.}, 44(1):87--113, 1995.

\bibitem{Triebel1983}
H.~Triebel.
\newblock {\em Theory of function spaces}, volume~78 of {\em Monographs in
  Mathematics}.
\newblock Birkh\"auser Verlag, Basel, 1983.

\bibitem{Uhlenbeck-1982}
K.~K. Uhlenbeck.
\newblock Connections with {$L^{p}$} bounds on curvature.
\newblock {\em Comm. Math. Phys.}, 83(1):31--42, 1982.

\bibitem{Wente69}
H.~C. Wente.
\newblock An existence theorem for surfaces of constant mean curvature.
\newblock {\em J.~Math.~Anal.~Appl.}, 26:\ 318--344, 1969.

\end{thebibliography}
\end{document}